\title{The Gardner equation and the stability of multi-kink solutions of the mKdV equation}
\author{Claudio Mu\~noz}
\address{Departamento de Ingenier\'ia Matem\'atica y CMM, Universidad de Chile, Casilla 170-3, Correo 3, Santiago \\ Chile}
\email{cmunoz@dim.uchile.cl}
\date{May, 2011}
\subjclass[2000]{Primary 35Q51, 35Q53; Secondary 37K10, 37K40}
\keywords{modified KdV equation, Gardner equation, integrability, multi-soliton, multi-kink, stability, asymptotic stability, Gardner transform}
\thanks{}
\chardef\bslash=`\\ 
\newtheorem{thm}{Theorem}[section]
\newtheorem{cor}[thm]{Corollary}
\newtheorem{lem}[thm]{Lemma}
\newtheorem{prop}[thm]{Proposition}
\newtheorem{defn}[thm]{Definition}
\theoremstyle{remark}
\numberwithin{equation}{section}
\newcommand{\R}{\mathbb{R}}
\newcommand{\T}{\mathbb{T}}
\newcommand{\la}{\lambda}
\newcommand{\al}{\alpha}
\newcommand{\ga}{\gamma}
\def\bm{\left( \begin{array}{cc}}
\def\endm{\end{array}\right)}
 \providecommand{\abs}[1]{\lvert#1 \rvert}
\newcommand{\ve}{\varepsilon}
\newcommand{\be}{\begin{equation}}
\newcommand{\ee}{\end{equation}}
\newcommand{\ba}{\begin{equation*}}
\newcommand{\ea}{\begin{equation*}}
\newcommand{\bea}{\begin{eqnarray}}
\newcommand{\eea}{\end{eqnarray}}
\newcommand{\bee}{\begin{eqnarray*}}
\newcommand{\eee}{\end{eqnarray*}}
\newcommand{\ben}{\begin{enumerate}}
\newcommand{\een}{\end{enumerate}}
\newcommand{\nonu}{\nonumber}
\newcommand{\eval}[2][\right]{\relax
  \ifx#1\right\relax \left.\fi#2#1\rvert}
\let\abs=\envert
\begin{document}
\begin{abstract}
Multi-kink solutions of the defocusing, modified Korteweg-de Vries equation (mKdV) found by Grosse \cite{Gr1,Gr2} are shown to be globally $H^1$-stable, and asymptotically stable. Stability in the one-kink case was previously established by Zhidkov \cite{Z} and Merle-Vega \cite{MV}. The proof uses transformations linking the mKdV equation with focusing, Gardner-like equations, where stability and asymptotic stability in the energy space are known.  We generalize our results by considering the existence, uniqueness and the dynamics of generalized multi-kinks of defocusing, non-integrable gKdV equations, showing the inelastic character of the kink-kink collision in some regimes.
\end{abstract}
\maketitle \markboth{$H^1$-stability of multi-kinks} {Claudio Mu\~noz}
\renewcommand{\sectionmark}[1]{}

\section{Introduction and main results}

\medskip

In this paper we continue our work on stability of multi-soliton solutions for some well-known, dispersive equations, started in a joint work with M.A. Alejo and L. Vega \cite{AMV}. In this opportunity, we consider the nonlinear $H^1$-stability, and asymptotic stability, of the \emph{multi-kink} solution of the \emph{defocusing}, modified Korteweg-de Vries (KdV) equation
\be\label{mKdV}
u_{t}  +  (u_{xx} - u^3)_x =0.
\ee
Here $u=u(t,x)$ is a real valued function, and $(t,x)\in \R^2$. Solutions $u=u(t,x)$ of (\ref{mKdV}) are invariant under space and time translations, and under suitable scaling properties. Indeed, for any $t_0, x_0\in \R$, and $c>0$, both $u(t-t_0, x-x_0)$ and $c^{1/2} u(c^{3/2} t, c^{1/2} x)$ are solutions of (\ref{mKdV}).  Finally,  $u(-t,-x)$ and $-u(t,x)$ are also solutions.

\medskip

From a mathematical point of view, equation (\ref{mKdV}) is an \emph{integrable model} \cite{AS}, with a Lax pair structure and infinitely many conservation laws. Moreover, equation (\ref{mKdV}) has non-localized \emph{solitons} solutions, called \emph{kinks}, namely solutions of the form
\be\label{Sol}
u(t,x) = \varphi_c (x + ct + x_0), \quad \varphi_c(s) := \sqrt{c}\ \varphi (\sqrt{c} s), \quad c>0,\; x_0\in \R,
\ee
and
\be\label{Q}
\varphi (s):= \tanh (\frac{s}{\sqrt{2}}),
\ee
which solves
\be\label{ecvarfi}
\varphi'' + \varphi - \varphi^3 =0 , \; \hbox{ in } \R, \quad \varphi(\pm \infty) =\pm 1, \quad \varphi'>0.
\ee

\medskip

On the one hand, since the Cauchy problem associated to (\ref{mKdV}) is locally well posed in $ \varphi_c(\cdot +ct) + H^1(\R)$ (cf. Merle-Vega \cite[Prop. 3.1]{MV}), each solution is indeed global in time thanks to the conservation of Energy:
\be\label{E}
E[u](t) := \frac 12 \int_\R u_x^2(t,x) dx + \frac 14 \int_\R (u^2-c)^2(t,x) dx = E[u](0).
\ee
A simple inspection reveals that this is a non-negative quantity. 

\medskip

On the other hand, the standard Cauchy problem for initial data in the Sobolev space $H^s(\R)$ is locally well-posed for $s \geq \frac 14$ (Kenig-Ponce-Vega \cite{KPV}), and globally well-posed for $s>\frac 14$ (Colliander et al. \cite{CKSTT}). This result is almost sharp since for  $s<\frac 14$ the solution map has been shown  to not be uniformly continuous, see Christ-Colliander-Tao \cite{CCT} (see also Kenig-Ponce-Vega \cite{KPV2} for an early result in the focusing case).

\medskip

It is also important to stress that (\ref{mKdV}) has in addition another less regular conserved quantity, called mass:
\be\label{M}
M[u](t) := \frac 12 \int_\R (c-u^2(t,x))dx = M[u](0).  
\ee
Of course this quantity is well-defined for solutions $u(t)$ such that $(u^2(t)-c)$ has enough decay at infinity. In particular, one has $M[\varphi_c] <+\infty.$

\medskip

Now we focus on the study of suitable perturbations of kinks solutions of the form (\ref{Sol}). This question leads to the introduction of the concepts of \emph{orbital and asymptotic stability}. In particular, since the energy (\ref{E}) is a conserved quantity --in other words, it is a Lyapunov functional--, well defined for solutions at the $H^1$-level, it is natural to expect that kinks are (orbitally) stable under small perturbations in the energy space. Indeed, $H^1$-stability of mKdV kinks has been considered initially by Zhidkov \cite{Z}, see also Merle-Vega \cite{MV} for a complete proof, including an adapted well-posedness theory. We recall that their proof is strongly based in the {\bf non-negative} character of the energy (\ref{E}) around a kink solution $\varphi_c$, which balances the bad behavior of the mass (\ref{M}) under general $H^1$-perturbations of a kink solution.

\medskip

For additional purposes, to be explained later, we recall that in \cite{MV}, the main objective of Merle and Vega was to prove that solitons of the KdV equation
\be\label{KdV}
u_t + (u_{xx} + u^2)_x =0,
\ee
were $L^2$-stable, by using the \emph{Miura transform} 
\be\label{Mi}
M[v](t,x) :=  \frac 32 c +\big[\frac 3{\sqrt{2}} v_x - \frac 32 v^2\big](t, x-3ct).
\ee
This nonlinear $H^1-L^2$ transformation links solutions of (\ref{mKdV}) with solutions of the KdV equation (\ref{KdV}). In particular, the image of the family of kink solutions (\ref{Sol}) under the transformation (\ref{Mi}) is the well-known soliton of KdV, with scaling $2c$ (cf. \cite{MV}):
$$
M[\varphi_c (x+ct +x_0)] = Q_{2c}(x-2ct +x_0).
$$
Therefore, by proving the $H^1$-stability of single kinks --a question previously considered by Zhidkov \cite{Z}--, and (\ref{Mi}), they obtained a form of $L^2$-stability for the KdV soliton. Additionally, a simple form of asymptotic stability for the kink solution was proved. Related asymptotic results for soliton-like solutions can also be found e.g. in \cite{Cuc1,SW,PW, KK,MMarma,MMnon}.

\medskip

Kinks are also present in other nonlinear models, such as the sine-Gordon (SG) equation, the $\phi^4$-model, and the Gross-Pitaevskii (GP) equation \cite{AC,DP}. In each case, it has been proved that their are stable for small perturbations in a suitable space, cf. \cite{HPW,GSS,Z,GZ,BGSS}. Let us also recall that the SG and GP equations are integrable models in one dimension \cite{AC,DP}.

\medskip

Let us come back to the equation (\ref{mKdV}). In addition to the previously mentioned kink solution (\ref{Sol}), mKdV has \emph{multi-kink} solutions, as a consequence of the integrability property, and the Inverse Scattering method. This result, due to Grosse \cite{Gr1,Gr2}, can be obtained by a different approach using the Miura transform (\ref{Mi}), see Gesztesy-Schweinger-Simon \cite{GS,GSSi}, or the monograph by Thaller \cite{Dirac}. According to Gesztesy-Schweinger-Simon \cite{GSSi}, there are at least two different forms of multi-kink solutions for (\ref{mKdV}), which we describe below (cf. Definitions \ref{MKe} and \ref{MKo}). Moreover, they proved that the Miura transform sends these solutions towards a well defined family of multi-soliton solutions of the KdV equation, provided a \emph{criticality} property is satisfied (see \cite{GSS,GSSi} for such an assumption). 

\medskip

In order to present multi-kinks from a different point of view, we need some preliminaries.

\medskip

In \cite{AMV} (see e.g. \cite{Mu2} for a short review), Alejo-Mu\~noz-Vega showed the $L^2$-stability of KdV multi-solitons following the Merle-Vega approach and the Gesztesy-Schweinger-Simon property \cite{GSSi}, above described. However, $H^1$-stability of multi-kinks was not known at that moment. Even worse, according to our knowledge, there was no result involving stability of several kink solutions, for any type of dispersive equation with stable kinks. Instead, we avoided this problem and followed a different approach, based in the use of the {\bf Gardner transform}
\be\label{GTr}
M_\beta[v] := v - \frac 32\sqrt{2\beta} v_x - \frac 32\beta v^2.
\ee
This nonlinear map links solutions of the KdV equation and the \emph{Gardner equation} \cite{Ga0,Ga},
\be\label{Ga}
v_t + (v_{yy} + v^2 -\beta v^3 )_y=0, \; \hbox{ in } \;  \R_t \times \R_y , \quad \beta>0.
\ee
In particular, the Gardner transform sends Gardner solitons towards KdV solitons (see \cite{AMV} for further details).

However, we have realized that the existence, uniqueness and stability of multi-kinks is closely related to the solitons of the Gardner equation, and more generally, \emph{dynamical properties of defocusing gKdV equations are closely related to those of suitable focusing counterparts}. In particular, as a consequence of our results, we provide the first proof of stability for multi-kinks solutions of the mKdV equation. This result can be also considered a first step towards the understanding of the dynamics of several SG and $\phi^4$ kinks. 

\medskip

In order to explain in more detail this relationship, let us recall that the Gardner equation is also an integrable model \cite{Ga}, with soliton solutions of the form 
$$
v(t,y) := Q_{c,\beta} (y-ct),
$$
and\footnote{See e.g. \cite{Mu1,AMV} and references therein for a more detailed description of solitons and integrability for the Gardner equation.} 
\be\label{SolG}
Q_{c,\beta} (s) := \frac{3c}{1+ \rho \cosh(\sqrt{c}s)}, \quad \hbox{ with }\quad  \rho := (1-\frac 92 \beta c)^{1/2}, \quad 0<c < \frac{2}{9\beta}.
\ee
In particular, in the formal limit $\beta\to 0$, we recover the standard KdV soliton. On the other hand, the Cauchy problem associated to (\ref{Ga}) is globally well-posed under initial data in the energy class $H^1(\R)$ (cf. \cite{KPV}), thanks to the mass and energy conservation laws (see \cite{AMV} for more details).

\medskip

The first, striking connection is well-known in the mathematical physics literature, and it was in part used in the recent paper \cite{AMV}. Indeed, let $v=v(t,y)\in C(\R,H^1(\R))$ be a solution of (\ref{Ga}). Then  
\be\label{CG}
u(t,x):= b -  \sqrt{\beta} \, v \big(t,x+\frac{t}{3\beta } \big),  \quad b:=\frac{1}{3\sqrt{\beta}},
\ee
solves the mKdV equation (\ref{mKdV}).\footnote{Note that \be\label{111}u(t,x):= - b +  \sqrt{\beta} \, v \big(t,x+\frac{t}{3\beta } \big),  \quad b=\frac1{3\sqrt{\beta}},\;  \beta>0,\ee is also a solution of (\ref{mKdV}).} In terms of the Miura and Gardner transform, it reads as follows

\begin{figure}[h!]
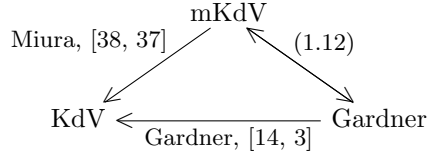

\begin{diagram}
& & \hbox{mKdV} & & \\
& \ldTo^{\small\hbox{Miura, \cite{Ga0,MV}}} & &  \luTo\rdTo^{\smaller\hbox{(\ref{CG})}} &  \\
\hbox{KdV} & & \lTo_{\small\hbox{Gardner, \cite{Ga,AMV}}} & & \hbox{Gardner} 
\end{diagram}
\caption{Transformation (\ref{CG}) in terms of Miura (\ref{Mi}) and Gardner transforms.}
\end{figure}

Note that, for $t$ fixed, (\ref{CG}) is a diffeomorphism which {\bf preserves regularity}, a key difference with respect to the Miura and Gardner transforms. Note in addition that $u$ in (\ref{CG}) is an $L^\infty$-function with nonzero limits at infinity. This will allow to consider the first class of multi-kink solutions of (\ref{mKdV}), characterized by the same positive limit $(=b)$ as $x\to \pm \infty$. Moreover, since $-u(t,x)$ is also a solution of (\ref{mKdV}), one can easily construct a solution with negative limits at infinity. This analysis motivates the following alternative approach for the multi-kink solution:

\begin{defn}[\emph{Even} multi-kink solutions, see also \cite{Gr1,Gr2,GSSi,Dirac}]\label{MKe}~

Let $\beta>0$, scaling parameters $0<c_1^0<c_2^0<\ldots <c_N^0<\frac 2{9\beta}$ and $x_1^-, \ldots, x_N^- \in \R$ be fixed numbers. We say that a solution $U_{e} (t) := U_{e} (t; c_1^0,\ldots ,c_N^0; x_1^-, \ldots, x_N^-)$ of (\ref{mKdV}) is an \emph{even multi-kink} if it satisfies
\be\label{lim1}
\lim_{t\to - \infty}  \big\| U_{e} (t)  - b + \sqrt{\beta}\sum_{j=1}^N Q_{c_j^0,\beta} (\cdot + \tilde c_j t + x_j^{-} ) \big\|_{H^1(\R)} =0,
\ee
\be\label{lim2}
\lim_{t\to + \infty}  \big\| U_{e} (t)  - b + \sqrt{\beta}\sum_{j=1}^N Q_{c_j^0,\beta} (\cdot + \tilde c_j t + x_j^{+} ) \big\|_{H^1(\R)} =0,
\ee
with $b=\frac 1{3\sqrt{\beta}}$, $\tilde c_j:= \frac 1{3\beta}- c_j^0 >0$, $x_j^{+}\in \R$ depending only on $(x_k^{-})$ and $(c_k^0)$, and $Q_{c,\beta}$ being solitons of the Gardner equation (\ref{Ga}). 
\end{defn}

\noindent
{\bf Remarks. }

\noindent
1. Let us emphasize that $\tilde c_{N}<\tilde c_{N-1}<\ldots <\tilde c_1 $, which means that bigger Gardner solitons are actually slower than the smaller ones. Note also that they move from the right to the left, as time evolves. In conclusion, as time goes to $\pm \infty$, the Gardner components of the multi-kink solution are ordered in the {\bf inverse} sense compared with the usual solitons of the Gardner equation, or any \emph{focusing} gKdV equation.

\medskip

\noindent
2. The denomination {\bf multi-kink} above comes from the fact the these solutions can be seen asymptotically as the \emph{sum} of several kinks $\pm \varphi_c$ of the form (\ref{Sol}). For instance, with our notation, given $\beta>0$ and $0<c<\frac 2{9\beta}$, an expression for the 2-kink solution is given by \cite[p. 505]{GSSi} (see also \cite[p. 273]{Dirac}) 
\be\label{2K}
U_{e}(t,x) =  b  - [\varphi_{c/2} (x+ \tilde c t + 2x_0 ) -\varphi_{c/2} (x+\tilde c t)], 
\ee
with
$$
b:=\frac{1}{3\sqrt{\beta}} ,\quad  \tilde c:=    \frac 1{3\beta} - c, \quad x_0:= \frac{1}{2\sqrt{c}} \log \Big(\frac{\sqrt{2} + 3\sqrt{\beta c}}{\sqrt{2}- 3\sqrt{\beta c}} \Big)>0,
$$
and $\varphi_c$ as in (\ref{Sol}). Note that both kinks $\pm \varphi_{c/2}$ have the {\bf same velocity} $\tilde c$, a key difference with the SG and $\phi^4$ models. After a quick computation, using the identity
$$
\tanh (a+k) -\tanh a = 2\tanh (k) \Big[ 1+\frac{\cosh(2a+k)}{\cosh(k)} \Big]^{-1},
$$
one can see that (\ref{2K}) can be written, as in (\ref{lim1})-(\ref{lim2}), using the Gardner soliton (\ref{SolG}):
$$
U_{e}(t,x) = b -\sqrt{\beta}Q_{c,\beta}(x +  \tilde c t  + x_0),
$$
which will be helpful for our purposes. From this fact one can say that in general, the function $U_{e}$ represents a {\bf $2N$-kink solution}. In terms of our point of view, it will represent $N$ different Gardner solitons attached to the non-zero constant $b$.

\medskip

The existence of a solution $U_{e}$ satisfying (\ref{lim1}) is a simple consequence of (\ref{CG}) and the behavior of the $N$-soliton solution of the Gardner equation (see also \cite{Gr1,Gr2,GSSi} and \cite[pp. 272-273]{Dirac} for the standard deduction). Indeed, from the integrable character of this last equation, given parameters $\beta>0$, $0<c_1^0<\ldots<c_N^0<\frac 2{9\beta}$, and $x_1^0,\ldots, x_N^0\in \R$, it is well-known that there exists a $N$-soliton solution of the form (see e.g. Maddocks-Sachs \cite{MS} for a similar structure)
\be\label{VN}
V^{(N)}(t,x) := V^{(N)}(x; c_j^0; x_j^0-c_j^0 t )
\ee 
of (\ref{Ga}), and which satisfies
\bea
\lim_{t\to - \infty}\big\| V^{(N)}(t) - \sum_{j=1}^N Q_{c_j^0,\beta} (\cdot- c_j^0 t -x_j^-) \big\|_{H^1(\R)} =0, \label{Minus}\\
\lim_{t\to + \infty}\big\| V^{(N)}(t) - \sum_{j=1}^N Q_{c_j^0,\beta} (\cdot- c_j^0 t -x_j^+) \big\|_{H^1(\R)} =0,\label{Plus}
\eea
for some $x_j^\pm \in \R$, uniquely depending on the set of parameters $(c_k^0,x_k^0).$ 
Moreover, note that $V^{(N)}(t)$ is {\bf unique} in the sense described by Martel in \cite{Martel1}:

\medskip

\noindent
{\bf Asymptotic uniqueness}: ~
Given $\beta>0$, $0<c_1^0<\ldots<c_N^0<\frac 2{9\beta}$, and $x_1^-,\ldots, x_N^-\in \R$, the corresponding multi-soliton $V^{(N)}$ given in (\ref{VN}) is the unique $C(\R, H^1(\R))$-solution of (\ref{Ga}) satisfying (\ref{Minus}). 

\medskip

From (\ref{CG}) we can define
\be\label{Unbe}
U_{e}(t) := b + \sqrt{\beta} V^{(N)}(t, \cdot +\frac t{3\beta}).
\ee
Note that we {\bf do not need} the criticality assumption required in \cite{GSSi,Dirac}. Following the notation of Maddocks and Sachs \cite{MS} and (\ref{VN}), we may think $U_{e}$ as a function of three independent set of variables: 
$$
U_{e}(t,x) := U_{e}(x; c_j^0, x_j^0+\tilde c_j t ), 
$$
with $\tilde c_j$ given in Definition \ref{MKe}.  Therefore, as a conclusion of the preceding analysis, and using (\ref{CG}), we get the uniqueness of the corresponding solution $U_{e}$. 

\begin{thm}[Uniqueness of even multi-kink solutions]\label{UniE}~

Let $\beta>0$, $0<c_1^0<c_2^0<\ldots <c_N^0< \frac2{9\beta}$ and $x_1^-, \ldots, x_N^- \in \R$ be fixed numbers. Then the associated even multi-kink $U_{e}$ defined in (\ref{Unbe}) is the unique solution of (\ref{mKdV}) satisfying (\ref{lim1}). 
\end{thm}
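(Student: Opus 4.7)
The plan is to reduce the uniqueness statement for even multi-kinks of \eqref{mKdV} to the asymptotic uniqueness of $N$-soliton solutions of the Gardner equation \eqref{Ga}, exploiting that the map \eqref{CG} is a regularity-preserving diffeomorphism, hence a bijection between the relevant classes of solutions. The theorem of Martel quoted just before the statement provides uniqueness on the Gardner side, so the bulk of the argument is to set up the correspondence correctly along the asymptotic profile prescribed in \eqref{lim1}.

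Concretely, let $\tilde U_{e}\in C(\R, b + H^1(\R))$ be any solution of \eqref{mKdV} satisfying \eqref{lim1}, and define
\be\label{invCG}
\tilde v(t,y):= \frac{1}{\sqrt{\beta}}\bke{b - \tilde U_{e}\bke{t,\, y - \tfrac{t}{3\beta}}}.
\ee
First I would check that \eqref{invCG} is the formal inverse of \eqref{CG}, so that $\tilde v$ is a bona fide $C(\R, H^1(\R))$-solution of \eqref{Ga}; this relies only on the algebraic identity between the two equations under the translation and scaling in \eqref{CG}, together with the global $H^1$-well-posedness of \eqref{Ga} recalled in the introduction. Next I would translate the asymptotic hypothesis: since
\[
y + \tilde c_j t + x_j^{-} - \tfrac{t}{3\beta} = \bke{y-\tfrac{t}{3\beta}} + \tilde c_j t + x_j^{-}, \qquad \tilde c_j - \tfrac{1}{3\beta} = -c_j^{0},
\]
and the $H^1$-norm is translation-invariant, a direct substitution of \eqref{lim1} into \eqref{invCG} yields
\[
\lim_{t\to -\infty} \Big\| \tilde v(t) - \sum_{j=1}^N Q_{c_j^{0},\beta}\bke{\cdot - c_j^{0}t - \hat x_j^{-}}\Big\|_{H^1(\R)} =0, \qquad \hat x_j^{-}:= -x_j^{-},
\]
which is precisely the asymptotic behavior \eqref{Minus}. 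By the asymptotic uniqueness statement for Gardner multi-solitons attributed to Martel, $\tilde v = V^{(N)}$, and inverting \eqref{invCG} via \eqref{CG} gives $\tilde U_e = U_e$.

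The main technical point, and what I expect to be the principal obstacle, is to verify rigorously that \eqref{invCG} defines an $H^1$-solution of the Gardner equation whenever $\tilde U_{e}$ is an mKdV solution satisfying \eqref{lim1}; in particular, one must ensure that $\tilde U_{e}(t,\cdot) - b \in H^1(\R)$ for every $t \in \R$ (not just asymptotically), which follows from the local well-posedness theory in $\varphi_c(\cdot+ct) + H^1(\R)$ of \cite{MV} combined with the bijective character of \eqref{CG}. Once this passage between function classes is justified, nothing more is needed: everything reduces to Martel's result applied on the Gardner side, and no criticality condition in the sense of \cite{GSSi,Dirac} enters the argument because the mapping \eqref{CG}, unlike the Miura transform, is a pointwise diffeomorphism preserving $H^1$-regularity.
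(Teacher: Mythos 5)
Your proposal is correct and follows essentially the same route as the paper: apply the inverse of the transformation (\ref{CG}) to any solution satisfying (\ref{lim1}), observe (after the change of frame, with the translation parameters relabeled) that the resulting $H^1$ Gardner solution satisfies (\ref{Minus}), and conclude by Martel's asymptotic uniqueness of the Gardner $N$-soliton $V^{(N)}$. The extra care you take with the frame shift $\tilde c_j - \frac{1}{3\beta} = -c_j^0$ and with the function class $b + H^1(\R)$ is exactly the (implicit) content of the paper's short argument.
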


\begin{proof}
See Section \ref{2}.
\end{proof}

The second problem that we want to consider is the {\bf stability} of the multi-kink $U_{e}$. First of all, we recall some important literature.
 
\medskip

In \cite{MMT, MMan}, Martel, Merle and Tsai have showed the stability and asymptotic stability of the \emph{sum of $N$ solitons} of some generalized KdV equations, \emph{well decoupled} at the initial time, in the energy space $H^1(\R)$. We say that such an initial data is {\bf well-prepared}. Their approach is based on the construction of $N$ \emph{almost conserved quantities}, related to the mass of each solitary wave, plus the total energy of the solution. Although the proof for general nonlinearities is not present in the literature, it is a direct consequence of \cite{MMT} (see also Section 5 in \cite{MMan}.). An important remark to stress is that  their proof applies even for non-integrable cases, provided they have {\bf stable solitons}, in the sense of Weinstein \cite{We}. In the particular case of the Gardner equation, this condition reads  
\be\label{WC}
\partial_c \int_\R Q_{c,\beta}^2(s)ds >0, \quad \hbox{ for $c<\frac 2{9\beta}$.}
\ee
 This inequality is directly verifiable in the case of Gardner solitons, see (\ref{dQc}). From this result and Definition \ref{MKe} we claim the following

\begin{thm}[Stability of \emph{even} multi-kink solutions]\label{MT1}~

The family of multi-kink solutions $U_{e} (t)$ from Definition \ref{MKe} and (\ref{Unbe}) is global-in-time $H^1$-stable, and asymptotically stable as $t\to \pm\infty$.

\end{thm}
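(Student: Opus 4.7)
The plan is to transfer the stability problem for $U_{e}$ under mKdV to the analogous problem for the Gardner $N$-soliton $V^{(N)}$ under (\ref{Ga}), via the pointwise transformation (\ref{CG}). For every fixed $t$, the map $v\mapsto b-\sqrt{\beta}\, v(\cdot + \tfrac{t}{3\beta})$ is an affine diffeomorphism of $b+H^1(\R)$ onto itself and acts as an $H^1$-isometry up to the multiplicative constant $\sqrt{\beta}$. Hence, if $u\in C(\R,b+H^1(\R))$ is the mKdV solution issuing from initial datum $u_0$ that is $H^1$-close to $U_{e}(0)$, then the function $v(t,y):=\tfrac{1}{\sqrt{\beta}}\bke{b-u(t,y-\tfrac{t}{3\beta})}$ is a Gardner solution with
$$
\|u(t)-U_{e}(t)\|_{H^1(\R)}=\sqrt{\beta}\,\|v(t)-V^{(N)}(t)\|_{H^1(\R)}\quad\hbox{for every } t\in\R.
$$
This reduces Theorem \ref{MT1} to proving the global $H^1$-stability and asymptotic stability of $V^{(N)}$ under the Gardner flow.

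To handle $V^{(N)}$, I would invoke the Martel--Merle--Tsai framework \cite{MMT,MMan}. Its only structural input is Weinstein's condition (\ref{WC}) on each component, which follows from a direct computation on the explicit profile (\ref{SolG}) throughout the subcritical range $0<c<\tfrac{2}{9\beta}$. Since the scaling parameters $c_1^0<\cdots<c_N^0$ are strictly ordered, the soliton components of $V^{(N)}$ separate linearly in time, so by (\ref{Plus}) one can pick $T>0$ large enough that $V^{(N)}(T)$ is $H^1$-arbitrarily close to a well-decoupled sum $\sum_{j=1}^N Q_{c_j^0,\beta}(\cdot-c_j^0 T-x_j^{+})$. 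Applying the MMT stability theorem with initial time $T$ to any small $H^1$-perturbation of $V^{(N)}(T)$ controls the Gardner solution for all $t\geq T$; local well-posedness of (\ref{Ga}) in $H^1$ propagates the smallness from $t=0$ up to $t=T$. The interval $t\leq 0$ is handled identically, using (\ref{Minus}) together with the reversibility symmetry $(t,y)\mapsto(-t,-y)$ of (\ref{Ga}). This yields global $H^1$-stability of $V^{(N)}$.

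Asymptotic stability is then the content of the MM portion of \cite{MMan}: one constructs modulated centers $\rho_j(t)$ and limiting speeds $c_j^+$ close to $c_j^0$ such that, as $t\to+\infty$, the remainder $v(t,\cdot)-\sum_{j} Q_{c_j^+,\beta}(\cdot-\rho_j(t))$ tends to zero in $H^1$ on the half-line to the right of a line translating slower than the slowest soliton. Pulling this statement back through (\ref{CG}) converts it into the asymptotic stability of $u$ around $U_{e}$, now with the reordered speeds $\tilde c_j=\tfrac{1}{3\beta}-c_j^+$ matching Definition \ref{MKe}. The limit $t\to-\infty$ is recovered by time reflection.

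I expect the main obstacle to be the passage from the MMT stability of a \emph{sum} of well-decoupled solitons to the stability of the \emph{multi-soliton} $V^{(N)}$ itself, which is not literally a sum at $t=0$. Bridging this gap requires combining Martel's asymptotic uniqueness of $V^{(N)}$ \cite{Martel1} with the fact that the soliton components separate linearly in time, so that $V^{(N)}(\pm T)$ becomes effectively well-prepared for $T$ large. Once the uniform control on $\|v(t)-V^{(N)}(t)\|_{H^1}$ is in place, the isometric character of (\ref{CG}) transfers everything to $(u,U_{e})$ without any further loss of regularity.
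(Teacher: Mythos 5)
Your proposal is correct and follows essentially the same route as the paper: reduce via the regularity-preserving diffeomorphism (\ref{CG}) to the stability of the Gardner $N$-soliton $V^{(N)}$, then obtain global-in-time $H^1$-stability and asymptotic stability of $V^{(N)}$ from the Martel--Merle--Tsai theorem for well-decoupled sums (Weinstein's condition being verified explicitly), combined with the continuity of the Gardner flow on compact time intervals, the explicit asymptotic behavior of the integrable multi-soliton, and the symmetry $(t,y)\mapsto(-t,-y)$ — which is precisely the content of Corollary \ref{StaG} and Theorem \ref{MT1b}. The bridge you flag (from sums of decoupled solitons to $V^{(N)}$ itself) is handled in the paper exactly as you propose, via the decoupled limits (\ref{Minus})--(\ref{Plus}) at large $|t|$ rather than via asymptotic uniqueness.
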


In Section \ref{2} we give a precise, $\ve$-$\delta$ formulation of this result. See Theorem \ref{MT1b}.

\bigskip

There is a second type of multi-kink solutions for (\ref{mKdV}), which is actually the best known one. Here, the standard kink $\varphi_c$ in (\ref{Sol}) and the Gardner equation play once again a crucial and surprising r\^ole. Indeed, let $\beta>0$ be a fixed parameter and suppose that one has a solution of (\ref{mKdV}) of the form (the reader may compare with (\ref{CG}) and (\ref{111}))
\be\label{CG2}
u(t,x) := \varphi_{c}(x+c t)  + \sqrt{\beta} \tilde u(t,x+\frac t{3\beta} ),  \quad c:= \frac{1}{9\beta},
\ee
and $\tilde u(t)\in H^1(\R)$. Then $\tilde u(t,y)$ satisfies the equation
\be\label{eqtu}
\tilde u_t + (\tilde u_{yy} + \tilde u^2 -\beta \tilde u^3)_y = 3((\varphi_{c}^2-c)\tilde u  +\sqrt{\beta} (\varphi_{c} +\sqrt{c})\tilde u^2)_y,
\ee
with $\varphi_c =\varphi_c (y- 2ct)$. In particular, if the support of $\tilde u(t)$ is mainly localized in the region where $\varphi_c \sim -\sqrt{c}$, namely $y\ll 2ct $, then the right hand side above is a {\bf small perturbation} of the left hand side, a Gardner equation with parameter $\beta>0$. As an admissible function $\tilde u$, we can take e.g. a sum of Gardner solitons:
$$
\tilde u(t,y) \sim \sum_{j=1}^{N-1} Q_{c_j,\beta} (y-c_j t ), \qquad 0<c_1<c_2\ldots <c_{N-1}<\frac 2{9\beta} = 2c,
$$
with support  localized in the region $c_{1} t \lesssim y \lesssim c_{N-1} t$, for $t\gg 1$. In particular, one has $c_{N-1}t \ll 2ct $ for $t\gg 1$, which is a necessary condition for the existence of a solution of the form (\ref{CG2}). Note in addition that Figure \ref{F2} can be adapted in the following way:

\begin{figure}[h!]
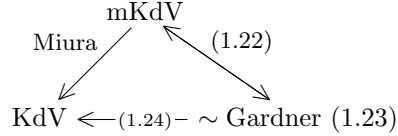
\label{F2}
\begin{diagram}
& & \hbox{mKdV} & & \\
& \ldTo^{\small\hbox{Miura}} & &  \luTo\rdTo^{\small\hbox{(\ref{CG2})}} &  \\
\hbox{KdV} & & \lTo~{(\ref{CG4})} & &\sim \hbox{Gardner } (\ref{eqtu}) 
\end{diagram}
\caption{The generalized diffeomorphism (\ref{CG2}) linking (\ref{eqtu}) and mKdV. }
\end{figure}
The transformation linking (\ref{eqtu}) and KdV is nothing but
\be\label{CG4}
\tilde u(t,x) \mapsto Q_{2c}(x-2ct) -3\sqrt{\beta} \varphi_c (x-2ct) \tilde u(t,x)  +\frac 32 \sqrt{2\beta} \tilde u_x(t,x) -\frac 32\beta \tilde u^2(t,x),
\ee
(compare with the Gardner transform (\ref{GTr})).

Finally, the same argument can be done in the case of a solution of the form $u(t,x) := \varphi_{c}(x+c t)  - \sqrt{\beta} \hat u(t,x+\frac t{3\beta} ),$ and the equation for $\hat u (t,y)$,
$$
\hat u_t + (\hat u_{yy} + \hat u^2 -\beta \hat u^3)_y = 3((\varphi_{c}^2-c)\hat u  +\sqrt{\beta}(\sqrt{c} -\varphi_{c})\hat u^2)_y,
$$
provided $\hat u$ is supported mainly in the region $\{ \varphi_c \sim \sqrt{c}\}$.  These two new ideas allow us to consider the following definition of a multi-kink solution, from the point of view of the Gardner equation:

\begin{defn}[\emph{Odd} multi-kink solutions, \cite{GSSi,Dirac}]\label{MKo}~

Let $N\geq 2$, $\beta>0$, scaling parameters $0<c_1^0<c_2^0<\ldots <c_{N-1}^0<\frac 2{9\beta}$ and $x_1^0, \ldots, x_{N}^0 \in \R$ be fixed numbers. We say that a solution $U_{o} (t) := U_{o} (t; c_j^0; x_j^0)$ of (\ref{mKdV}) is an \emph{odd multi-kink} solution if it satisfies
\be\label{lim3}
\lim_{t\to - \infty}  \big\| U_{o} (t) - \varphi_{c_N^0}(\cdot + c_N^0 t + x_N^{-}) + \sqrt{\beta} \sum_{j=1}^{N-1} Q_{c_j^0,\beta} (\cdot + \tilde c_j t + x_j^{-} ) \big\|_{H^1(\R)} =0,
\ee
\be\label{lim4}
\lim_{t\to + \infty}  \big\| U_{o} (t) - \varphi_{c_N^0}(\cdot + c_N^0 t + x_N^{+}) - \sqrt{\beta} \sum_{j=1}^{N-1} Q_{c_j^0,\beta} (\cdot + \tilde c_j t + x_j^{+} ) \big\|_{H^1(\R)} =0,
\ee
with $c_{N}^0:= \frac 1{9\beta}$, $\tilde c_j:=  \frac 1{3\beta}-c_j^0 >0$ and $x_j^{\pm}\in \R$ depending only on $(c_k^0)$. Finally,  $ \varphi_{c}$ is a kink solution (\ref{Sol}) with scaling $c$, and $Q_{c,\beta}$ is a soliton solution of the Gardner equation (\ref{Ga}).
\end{defn}

One can also say that $ U_{o}$ is composed by $(2N-1)$ single kinks, in other words, it is a {\bf $(2N-1)$-kink solution}. Additionally, as above mentioned, one may think this solution as function composed of three different class of parameters:
$$
U_{o} (t,x)  =U_{o} (x; c_j^0; \tilde c_j t + x_j^0), \quad (\tilde c_N := c_N^0).
$$
From the point of view of the Gardner equation, this solution represents a big kink, solution of mKdV, with attached $(\pm)$Gardner solitons ordered according their corresponding velocities $\tilde c_j$. Note finally that solitons move from the right to the left.  
\begin{figure}[h!]
  \medskip
  \begin{flushleft}
  \includegraphics[angle=180,scale=0.450]{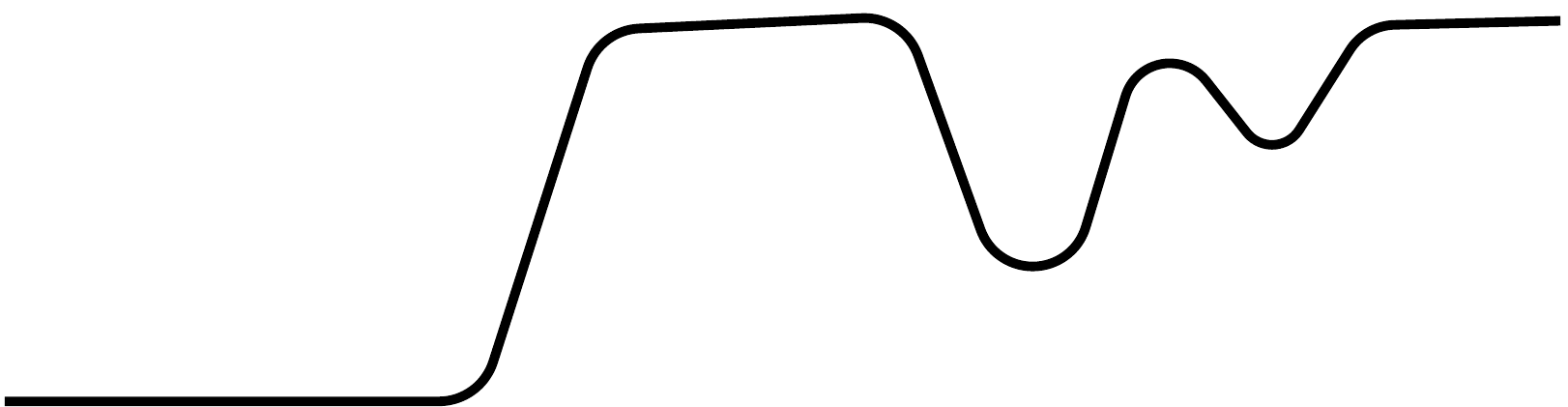} 
  \end{flushleft}
  \bigskip
\begin{flushright}
  \includegraphics[scale=0.450]{Diagrama1.pdf}
  \end{flushright}
    \caption{A schematic design of the evolution in time of a 5-kink solution of mKdV, composed of a big kink $\varphi_{c_3^0}$ and two Gardner solitons, $Q_{c_1^0,\beta}$ and  $Q_{c_2^0,\beta}$, $0<c_1^0<c_2^0<2c_3^0$, and $\beta = \frac 1{9c_3^0}$. Below, the behavior as $t\to -\infty$; above, the behavior as $t\to +\infty$. Each part is ordered according to their respective velocity $-\tilde c_j:= c_j^0- 3c_3^0<0$, $j=1,2$. Note that $-\tilde c_1< -\tilde c_2< - c_3^0$, which means that the smallest soliton $Q_{c_1^0,\beta}$ is actually the fastest one.}
\end{figure}

\medskip

The proof of the existence of this family is not direct, although it can be explicitly obtained from the solutions found by Grosse \cite{Gr1,Gr2}, using the Inverse Scattering method (see also \cite{GSSi}, or \cite[pp. 270-272]{Dirac} for an alternative procedure involving the inverse Miura transform). In this paper we present a third proof, which gives in addition a uniqueness property, uniform estimates and does not require the criticality property considered in \cite{GSSi,Dirac}. The uniqueness is, of course, modulo the $2N$-parameter family $(c_j^0, x_j^-)$.

\begin{thm}[Existence and uniqueness of odd multi-kink solutions]\label{EoMK}~

Let $N\geq 2$, $\beta>0$, $c_N^0= \frac 1{9\beta}$,  $0<c_1^0<c_2^0<\ldots <c_{N-1}^0<\frac 2{9\beta}$ and $x_1^-, \ldots, x_{N}^- \in \R$ be fixed numbers. There exists a unique solution $U_{o} (t) $ of (\ref{mKdV}) satisfying (\ref{lim3}).
\end{thm}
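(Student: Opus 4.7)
The plan is to reduce Theorem \ref{EoMK} to an existence and uniqueness question for the perturbed Gardner equation arising from the diffeomorphism (\ref{CG2}) centered at the tallest kink. Since $c_N^0=\frac{1}{9\beta}$, I look for $U_o$ in the form
\[
U_o(t,x)=\varphi_{c_N^0}(x+c_N^0 t+x_N^-)-\sqrt{\beta}\,\hat u\bke{t,\,x+\tfrac{t}{3\beta}},
\]
with $\hat u\in C(\R,H^1(\R))$. As recorded after (\ref{eqtu}), such a $\hat u(t,y)$ must satisfy
\[
\hat u_t+(\hat u_{yy}+\hat u^2-\beta\hat u^3)_y=3\bke{(\varphi_{c_N^0}^2-c_N^0)\hat u+\sqrt{\beta}\,(\sqrt{c_N^0}-\varphi_{c_N^0})\hat u^2}_y,
\]
with $\varphi_{c_N^0}=\varphi_{c_N^0}(y-2c_N^0 t+x_N^-)$, and the asymptotic condition (\ref{lim3}) becomes
\[
\lim_{t\to-\infty}\bigl\|\hat u(t)-R(t)\bigr\|_{H^1(\R)}=0,\qquad R(t,y):=\sum_{j=1}^{N-1}Q_{c_j^0,\beta}(y+\tilde c_j t+x_j^-).
\]
Since (\ref{CG2}) is a diffeomorphism preserving $H^1$-regularity, Theorem \ref{EoMK} is equivalent to the existence and uniqueness of such an $\hat u$.

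The decisive geometric observation is that, in the $y$-frame, the kink travels to the right at speed $2c_N^0$, whereas the $j$-th soliton in $R(t)$ moves to the left at speed $\tilde c_j=\frac{1}{3\beta}-c_j^0>0$, so the relative speed between any soliton and the kink is $\tilde c_j+2c_N^0>0$. Hence for $t\ll -1$ each soliton sits in $\{y\gg 2c_N^0 t\}$, where $\varphi_{c_N^0}\to+\sqrt{c_N^0}$ exponentially, and the forcing coefficients $\varphi_{c_N^0}^2-c_N^0$ and $\sqrt{c_N^0}-\varphi_{c_N^0}$ are exponentially small on every soliton support. In short, along the soliton trajectories the perturbed equation is an exponentially small perturbation of (\ref{Ga}); likewise the solitons separate from one another at rate $(\tilde c_j-\tilde c_{j+1})|t|$ since $\tilde c_1>\tilde c_2>\cdots>\tilde c_{N-1}$.

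I would prove existence by the backward-in-time compactness scheme of Martel \cite{Martel1} and Martel--Merle--Tsai \cite{MMT,MMan}. Choose $t_n\downarrow-\infty$ and let $\hat u_n$ be the $H^1$-solution of the perturbed Gardner equation with $\hat u_n(t_n)=R(t_n)$ (local well-posedness being inherited from mKdV via (\ref{CG2})). The central step is the uniform estimate
\[
\sup_{t\in[T_0,t_n]}\bigl\|\hat u_n(t)-R(t)\bigr\|_{H^1(\R)}\leq C\,e^{-\gamma|t|}
\]
for some $T_0\in\R$ and $\gamma>0$ independent of $n$, proved by modulating the scaling and translation parameters of each $Q_{c_j(t),\beta}$ so that the remainder $\eta(t)$ is orthogonal to the symmetries of every soliton, and then controlling $\eta$ by a combined Lyapunov functional $\mathcal F(t)=\sum_{j=1}^{N-1}a_j\bke{E_j+c_j^0 M_j}$ built from spatially localized mass and energy between consecutive solitons, as in \cite{MMT}. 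Weinstein's condition (\ref{WC}) provides coercivity of $\mathcal F$ on the orthogonal subspace, and differentiating $\mathcal F$ along the equation yields the usual favourable sign modulo (i) soliton--soliton interaction errors of order $e^{-\gamma(\tilde c_j-\tilde c_{j+1})|t|}$ and (ii) new kink-induced forcing errors of order $e^{-\gamma(\tilde c_j+2c_N^0)|t|}$; a bootstrap closes the estimate. Weak-$\ast$ compactness in $H^1$ together with Aubin--Simon strong $L^2_{loc}$ convergence then extract a limit $\hat u$ which solves the equation and realizes (\ref{lim3}).

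Uniqueness follows from the same machinery: two solutions of the perturbed Gardner equation satisfying the asymptotic condition must lie in a common $H^1$-tube around $R(t)$ for $t$ sufficiently negative, so the orthogonal decomposition is unambiguously defined for both, and the Lyapunov estimate combined with $\|\hat u-R\|_{H^1}\to 0$ as $t\to-\infty$ forces the two to coincide, in the spirit of \cite[Prop.~2.1]{Martel1}. The main technical obstacle is handling the inhomogeneity $3((\varphi_{c_N^0}^2-c_N^0)\hat u+\sqrt{\beta}(\sqrt{c_N^0}-\varphi_{c_N^0})\hat u^2)_y$ throughout the modulation and monotonicity cycle: the spatial cut-offs defining $E_j$ and $M_j$ must be placed so that the kink's tail is uniformly exponentially small on each cut-off, and this smallness must be preserved along the bootstrap as the modulated soliton centres drift. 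Once this is in place the remainder of the argument follows the Martel--Merle--Tsai blueprint essentially verbatim.
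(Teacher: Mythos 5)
Your proposal follows essentially the same route as the paper's Section~\ref{4}: reduce via the transformation (\ref{CG2}) to the perturbed Gardner equation, construct the solution by Martel's backward compactness scheme (approximate solutions with data equal to the kink-plus-solitons profile at large times, uniform exponential estimates of Martel--Merle--Tsai type via modulation, localized almost-monotone masses and energy coercivity, then weak limits plus an Egorov-type estimate), and obtain uniqueness from exponential convergence together with Martel's argument. Only minor bookkeeping differs: in the Gardner frame $y=x+\frac{t}{3\beta}$ the solitons actually travel to the \emph{right} at speed $c_j^0$, so the profile is $\sum_j Q_{c_j^0,\beta}(y-c_j^0 t+x_j^-)$ and the soliton--kink relative speed is $2c_N^0-c_j^0$ rather than $\tilde c_j+2c_N^0$ (the qualitative separation you need is unaffected), and the paper points out that the scaling parameters $c_j(t)$ need not be modulated in this construction, which simplifies the Lyapunov/bootstrap step you describe.
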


\begin{proof}
See Section \ref{4}. 
\end{proof}

The next result is a positive answer to the open question of stability of odd multi-kinks.

\begin{thm}[Stability of odd multi-kink solutions]\label{MT2}~

The family of multi-kink solutions $U_{o} (t)$ from Definition \ref{MKo} is global-in-time $H^1$-stable.
\end{thm}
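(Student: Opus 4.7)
The strategy is to exploit the nonlinear substitution (\ref{CG2}), which decouples the dynamics of the large kink component $\varphi_{c_N^0}$ from the trailing Gardner solitons, and to combine the Merle--Vega \cite{MV} single-kink stability with the Martel--Merle--Tsai \cite{MMT,MMan} multi-soliton stability in the Gardner framework already used in Theorem \ref{MT1}. Given a solution $u(t)$ of (\ref{mKdV}) with $\|u(0) - U_{o}(0)\|_{H^1}$ sufficiently small, I would introduce modulation parameters $c_N(t)$ and $\rho(t)$ and decompose
$$
u(t,x) = \varphi_{c_N(t)}\big(x + c_N(t)\, t + \rho(t)\big) + \sqrt{\beta}\,\tilde u\big(t, x + \frac{t}{3\beta}\big),
$$
with $c_N(t)$ and $\rho(t)$ determined by two standard orthogonality conditions on $\tilde u$ against the kink directions $\partial_c \varphi_c$ and $\partial_x \varphi_c$. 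The implicit function theorem provides such a decomposition as long as $u$ stays in a small $H^1$-tube around the multi-kink manifold, and the remainder $\tilde u$ then satisfies exactly equation (\ref{eqtu}) with $\varphi_c = \varphi_{c_N(t)}$.

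The key structural observation is that the $(N-1)$ Gardner soliton components of $\tilde u$, which travel in $y$-variable with velocities $c_j^0 < \frac{2}{9\beta} = 2 c_N^0$, are supported strictly to the left of the kink center $y \approx 2 c_N(t)\, t$; on their support, $\varphi_{c_N(t)}$ is exponentially close to $-\sqrt{c_N(t)}$, so the nonlinear coefficients $\varphi_c^2 - c$ and $\varphi_c + \sqrt{c}$ on the right-hand side of (\ref{eqtu}) are exponentially small. Applying the Martel--Merle--Tsai scheme to $\tilde u$, I would construct $(N-1)$ localized mass-plus-energy functionals, one per Gardner soliton, using cutoffs placed at the midpoints between consecutive solitons and between the fastest soliton and the kink. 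Each such functional is coercive modulo one scaling parameter by the Weinstein condition (\ref{WC}), which is verified directly from (\ref{SolG}), and each is almost-monotone along (\ref{eqtu}) up to error terms controlled by the exponential separation above.

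The kink component itself is treated by a Merle--Vega type argument: expanding the mKdV energy (\ref{E}) around $u = \varphi_{c_N(t)} + \sqrt{\beta}\,\tilde u$ yields a coercive quadratic form in the kink shape perturbation modulo the orthogonality conditions, whose cross-terms with $\tilde u$ are again negligible by spatial separation from the Gardner solitons. Combining this energy-based control of the kink with the $(N-1)$ almost-conserved Gardner functionals and running the usual bootstrap argument closes the $H^1$-stability estimate for $t \geq 0$; the negative time direction follows from the mKdV symmetry $u(t,x) \mapsto u(-t,-x)$ recorded in the introduction.

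The main obstacle is the non-integrability of the kink background: $\varphi_{c_N(t)}^2 - c_N(t)$ does not lie in $L^1(\R)$, so the global mass functional used in the purely multi-solitonic analyses of \cite{MMT,MMan} is infinite on the kink component and cannot be employed directly. This forces the splitting above, where the kink is controlled by the non-negative energy (\ref{E}) and the mass (\ref{M}) in the Merle--Vega sense, while the Gardner solitons are handled by localized mass-type quantities. Ensuring the compatibility of these two coercivity mechanisms uniformly in time, and in particular checking that the error terms generated by the right-hand side of (\ref{eqtu}) can be absorbed by the good negative terms coming from the derivatives of the cutoffs, is the delicate book-keeping that drives the proof.
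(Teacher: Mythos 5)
Your overall architecture is essentially the paper's: Theorem \ref{MT2} is reduced (via Theorem \ref{MT2b}) to Proposition \ref{Pr0}, whose proof uses the substitution (\ref{CG2}) so that the perturbation solves the modified Gardner equation (\ref{eqtuV}), controls the kink through the conserved energy (\ref{E}) and the Zhidkov positivity (Lemma \ref{Zi}) with a modulated translation, controls the $(N-1)$ Gardner solitons through Martel--Merle--Tsai-type localized, almost-monotone masses for $\tilde u$ (the paper takes the mass \emph{on the left} of each soliton, (\ref{Mjj}) and Lemma \ref{th:2}), uses exactly your observation that $\varphi_c^2-c$ and $\varphi_c+\sqrt{c}$ are exponentially small where the solitons live, and closes with a bootstrap; global-in-time stability then comes from the symmetry $u(t,x)\mapsto u(-t,-x)$ \emph{together with} the integrability-given purity of $U_o$ at both time infinities, as in \cite[Corollary 1.2]{AMV} (the symmetry alone does not handle data given at a time when the components are in interaction).

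There is, however, one step in your scheme that would fail as stated: the modulation of the kink scaling $c_N(t)$ with an orthogonality condition against $\partial_c\varphi_c$. First, $\partial_c\varphi_c$ tends to $\pm\frac{1}{2\sqrt{c}}$ as $x\to\pm\infty$, so it is not in $L^2(\R)$ and the pairing $\int_\R \tilde u\,\partial_c\varphi_c$ is not even well defined for a general $H^1$ remainder. Second, and more seriously, changing the scaling changes the asymptotic values $\pm\sqrt{c_N(t)}$ of the kink, so $\varphi_{c_N(t)}-\varphi_{c_N^0}\notin L^2(\R)$; the remainder in your decomposition would then leave $H^1(\R)$, destroying the Gardner framework, and the induced term $c_N'(t)\,\partial_c\varphi_{c_N(t)}$ in the equation for $\tilde u$ would be non-localized. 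The paper avoids this entirely: in Lemma \ref{dec} only the translation $x_N(t)$ of the kink is modulated (orthogonality (\ref{Ortho1})), the scaling being frozen at $c_N^0=\frac{1}{9\beta}$, and no scaling modulation is needed because the linearized energy around the kink is nonnegative with kernel spanned by $\varphi_c'$ (Lemma \ref{Zi}); only the Gardner solitons require scaling modulation and the extra orthogonality (\ref{Ortho2}). Relatedly, $\tilde u$ does not satisfy (\ref{eqtu}) ``exactly'': the modulation produces the additional term $\frac{x_N'(t)}{\sqrt{\beta}}\varphi_c'$ as in (\ref{eqtuV}), which must be (and is) absorbed in the monotonicity estimate. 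With the kink-scaling modulation removed, your proposal coincides with the paper's proof.
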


We prove this result in Section \ref{3}. In particular, we give a more precise statement in Theorem \ref{MT2b}.

\subsection*{General nonlinearities} We point out that our results, starting from transformations (\ref{CG})-(\ref{CG2}), and the Zhidkov theory developed in \cite{Z}, can be made even more general and include a wide range of non-integrable, defocusing gKdV equations. 

\medskip

Indeed, in the next paragraphs we first introduce the notion of {\bf generalized, even and odd multi-kink solutions}. Of course these objects have to match with those considered in Definitions \ref{MKe} and \ref{MKo}, for the special case of the integrable mKdV model. Second, we study the existence, uniqueness and stability of these new solutions in the case of \emph{well-prepared} initial data. Next, we consider some particular collision problems, in the spirit of \cite{MMcol1,MMcol2,Mu1} (note that the collision problem makes sense since we consider non-integrable equations).  The method used is the same as in the previous results, so \emph{we will skip most of the proofs}. We emphasize that the main idea is to exploit the properties contained in the following figure: 
\begin{figure}[h!]
\begin{diagram}
 u\in \hbox{ defocusing gKdV (even)} &  \lTo & {\smaller\hbox{$(u\sim b+ \tilde u)$}} &  \rTo & \tilde u \in \hbox{focusing gKdV} \\
 u\in \hbox{ defocusing gKdV (odd)} &  \lTo & {\smaller\hbox{$(u\sim\varphi_c + \tilde u)$}} &  \rTo & \tilde u \in \hbox{focusing gKdV}
\end{diagram}
\caption{The generalized transformations (\ref{CG})-(\ref{CG2}) linking a defocusing gKdV with a focusing gKdV equation.}\label{Dia}
\end{figure}

\noindent
Let us consider the \emph{generalized, defocusing} KdV equation
\be\label{dgKdV}
u_t + (u_{xx} -f(u))_x=0,\quad u =u(t,x)\in \R.
\ee 
Here $f:\R\to \R$ is a non-linear term, with enough regularity, to be specified below. From now on, we will assume the following hypotheses:

\medskip

\ben
\item[{\bf(a)}] There exists $b_0\in \R$ such that $f^{(k)}(b_0) \neq 0$, for some $k\in \{ 2,3,4\}$. 

\medskip

\noindent
Let $k_0\in\{2,3,4\}$ be the first integer $k$ satisfying this property. We assume $f$ is of class $C^{k_0+1}(\R)$.

\medskip

\item[\bf (b)] If $f^{(2)}(b_0)=0$ and $f^{(3)}(b_0)\neq 0$, then $f^{(3)}(b_0)< 0$.
\een

\medskip

The reader may compare e.g. with the integrable case $f(s)=s^3$, where for $b_0\neq 0$ one has $f''(b_0) = 6b_0\neq 0$. Another important example is the \emph{cubic-quintic} nonlinearity $f(s) := s^3 + \mu s^5$, $\mu \in \R-\{0\} $ fixed, for which 
$$
f''(b_0) = 2b_0 (3 + 10\mu b_0^2)\neq 0,  
$$
provided $b_0\neq 0$ and $b_0^2\neq -\frac{3}{10 \mu}$ in the case $\mu<0$. Otherwise, we are in a degenerate case and $f''(b_0) =0$. Computing the third derivative, one has
$$
f^{(3)} (b_0) = 6 (1+10 \mu b_0^2).
$$
For any $\mu \neq 0$ one has that $b_0=0$ does not satisfy {\bf (b)} above and therefore is not allowed. In the case $\mu<0$, $b_0^2= -\frac{3}{10 \mu}$, one has  $f^{(3)} (b_0)  = -12<0,$ which is an admissible case.

\medskip

Under these two assumptions and using \cite[Remark 2]{Martel1}, we prove existence and uniqueness of generalized even multi-kinks for (\ref{dgKdV}), satisfying the equivalent of (\ref{lim2}) (cf. Definition \ref{MKe}).

\begin{thm}[Existence and uniqueness of generalized, even multi-kinks]\label{Geven}~

Let $N\geq 2$ and $b_0 \in \R$ be such that ${\bf (a)}$-${\bf (b)}$ above are satisfied. There exists $c^*:=c^*(f,b_0)>0$ such that, for all $0<c_1^0<c_2^0<\ldots <c_{N}^0<c^*$, the following holds. There exists a unique solution $U_e=U_e(t) $ of (\ref{dgKdV}) such that $U_e(t) -b_0 \in C(\R, H^1(\R))$, and it satisfies
\be\label{Pure}
\lim_{t\to +\infty} \big\|U_e(t) -b_0 -b_1\sum_{j=1}^N Q_{c_j^0}(\cdot + \tilde c_j t) \big\|_{H^1(\R)} =0,
\ee
with\footnote{In the case $k_0 =2$ both $b_1$ and $-b_1$ are admissible values.} 
\be\label{b1}
b_1 := \Big( \frac{-k!}{f^{(k_0)}(b_0)}\Big)^{\frac 1{k_0-1}}, \qquad \tilde c_j:= f'(b_0) -c_j^0 \in \R,
\ee
and $\tilde u(t,y) = Q_{c_j}(y-c_j t)$ is a soliton solution of the {\bf focusing} gKdV equation
\be\label{fgKdV}
\tilde u_t + (\tilde u_{yy} + \tilde f(\tilde u))_y =0, 
\ee
where
\be\label{tff}
\tilde f(s) := -\frac 1{b_1} \big[ f(b_0 + b_1s) -f(b_0) - b_1 f'(b_0) s \big].
\ee
\end{thm}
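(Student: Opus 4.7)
The plan is to reduce the theorem to Martel's multi-soliton existence and uniqueness result \cite{Martel1} applied to a suitable \emph{focusing} gKdV equation, exploiting the generalized transformation indicated in Figure~\ref{Dia}. Concretely, I would look for $U_e$ in the form
\[
U_e(t,x) = b_0 + b_1\, \tilde u\bigl(t,\, x + f'(b_0)\, t\bigr),
\]
with $\tilde u \in C(\R, H^1(\R))$ solving the focusing equation (\ref{fgKdV}) for the nonlinearity $\tilde f$ given by (\ref{tff}). A direct Taylor expansion of $f$ around $b_0$ shows that this ansatz converts (\ref{dgKdV}) into (\ref{fgKdV}): the Galilean shift by $f'(b_0) t$ absorbs the drift generated by the linearization, while division by $b_1$ in (\ref{tff}) rescales amplitude. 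Under hypotheses (a)--(b), $b_1$ in (\ref{b1}) is a well-defined real number (with condition (b) precisely fixing the sign in the $k_0=3$ case), and the leading nonlinear term of $\tilde f$ computes to $\tilde f(s) = s^{k_0} + O(s^{k_0+1})$ as $s\to 0$. Hence (\ref{fgKdV}) is a focusing, strictly $L^2$-subcritical gKdV equation near zero, since $k_0 \in \{2,3,4\}$.

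Because $\tilde f$ is a smooth perturbation of a pure power $s^{k_0}$ near the origin, a standard scaling argument produces a threshold $c^* = c^*(f, b_0) > 0$ such that for each $c \in (0, c^*)$ the equation (\ref{fgKdV}) admits a unique, even, positive ground-state soliton $Q_c$ which (after rescaling) stays uniformly close to its pure-power counterpart and therefore satisfies the Weinstein stability condition $\partial_c \|Q_c\|_{L^2}^2 > 0$. I would further shrink $c^*$ if necessary to guarantee $f'(b_0) - c^* > 0$, so that all velocities $\tilde c_j$ in (\ref{b1}) are positive and ordered as required by the analogue of Definition~\ref{MKe}.

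Given these ingredients, the extension of Martel's theorem to general subcritical, non-integrable gKdV equations with Weinstein-stable solitons, as observed in \cite{MMT, MMan} and in \cite[Remark~2]{Martel1}, produces for each ordered set $0<c_1^0<\cdots<c_N^0<c^*$ a unique $\tilde u \in C(\R, H^1(\R))$ solving (\ref{fgKdV}) such that
\[
\lim_{t\to +\infty}\bigl\| \tilde u(t) - \sum_{j=1}^N Q_{c_j^0}(\cdot - c_j^0 t)\bigr\|_{H^1(\R)} = 0.
\]
Pushing this forward via the diffeomorphism above, and relabeling $y = x + f'(b_0)t$ so that $y - c_j^0 t = x + \tilde c_j t$, converts the limit into exactly (\ref{Pure}), giving existence. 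For uniqueness, any competitor $U_e'$ with $U_e' - b_0 \in C(\R, H^1(\R))$ satisfying (\ref{Pure}) pulls back under the same regularity-preserving diffeomorphism to an $H^1$-solution of (\ref{fgKdV}) with the same $N$-soliton profile, hence coincides with $\tilde u$ by Martel's uniqueness; inverting the diffeomorphism yields $U_e' = U_e$.

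The main obstacle will be rigorously establishing Weinstein's slope condition for the perturbed solitons $Q_c$ of (\ref{fgKdV}) uniformly on $(0, c^*)$: once $\tilde f$ genuinely departs from a pure power, the mass $\|Q_c\|_{L^2}^2$ no longer obeys an explicit scaling law, and pinning down $c^*$ quantitatively in terms of $f$ and $b_0$ requires a careful perturbation analysis of the soliton branch emerging from $c = 0$, together with a verification that the attendant linearized operator retains the spectral structure used in Martel's modulation/monotonicity machinery.
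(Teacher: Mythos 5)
Your argument is essentially the paper's own proof: the same change of unknown $u=b_0+b_1\tilde u(t,x+f'(b_0)t)$, the same Taylor expansion showing $\tilde f(s)=s^{k_0}+o(|s|^{k_0})$ with $k_0\in\{2,3,4\}$, and the same reduction to Martel's existence--uniqueness theorem \cite{Martel1} for asymptotic $N$-soliton solutions of the focusing equation (\ref{fgKdV})--(\ref{tff}), transported back and forth through the regularity-preserving diffeomorphism to get both (\ref{Pure}) and uniqueness. Two remarks. First, what you call the ``main obstacle'' is closed in the paper by citation rather than by a new perturbation analysis: existence of small, even, exponentially decaying solitons $Q_c$ for $0<c<c^*$ follows from Berestycki--Lions \cite{BL}, and the Weinstein condition for $c$ small follows from Martel--Merle \cite{MMan}, so no uniform-in-$c$ study of the soliton branch beyond these references is required (one simply takes $c^*$ small enough that both properties hold). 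Second, your step ``shrink $c^*$ so that $f'(b_0)-c^*>0$'' is unnecessary and, when $f'(b_0)\le 0$, impossible; the theorem only asserts $\tilde c_j\in\R$, and the paper explicitly notes that the $\tilde c_j$ may be zero, positive or negative. This is harmless for the rest of your argument, because Martel's machinery is applied in the $\tilde u$ frame, where the soliton speeds are the positive parameters $c_j^0$, and the ordering $\tilde c_N<\cdots<\tilde c_1$ is automatic.
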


\begin{proof}
See Section \ref{2.5}.
\end{proof}

\noindent
{\bf Remarks.}

\noindent
1. Let $b_0\in \R-\{ 0\}$, and $f_{b_0}(s) := s^5 -5b_0 s^4 +10 b_0^2 s^3 -10 b_0^3 s^2$. Then $c^*(f_{b_0},b_0) =+\infty$ and  $\tilde f_{b_0}(s) =s^5$. Therefore, according to \cite[Theorem 1]{Martel1}, the conclusion of the above Theorem are still valid for the $L^2$-critical case. Even better, the above result can be adapted in the case of $L^2$-supercritical nonlinearities, as a consequence of C\^ote-Martel-Merle \cite{CMM} and Combet \cite{Com}.

\noindent
2. Note that from $k_0\geq 2$ and {\bf(a)} above one has at least $\tilde f \in C^3(\R)$, a sufficient condition to obtain global well-posedness for (\ref{fgKdV})-(\ref{tff}) in $H^1(\R)$ \cite{KPV,MMan}.

\medskip

Following the Martel-Merle-Tsai's paper \cite{MMT}, we say that an initial configuration $u_0\in b_0+H^1(\R)$ for (\ref{dgKdV}) is {\bf well-prepared} if for $L,\al>0$ and  $0<c_1^0<\ldots <c_N^0<c^*$ ($c^*$ given by Theorem \ref{Geven}), and $ x_1^0 <  x_{2}^0<\ldots <  x_N^0 \in \R$, one has
\be\label{Inv22a}
 \big\|  u_0 -b_0 - b_1\sum_{j=1}^N Q_{c_j^0} (\cdot - x_j^0 ) \big\|_{H^1(\R)} \leq \al, \qquad   x_{j}^0 >  x_{j-1}^0 + L, \quad j=2, \ldots, N,
\ee
with $b_1$ defined in (\ref{b1}), and $Q_{c_j^0}$ solitons of (\ref{fgKdV})-(\ref{tff}). Our next result states that this configuration is preserved for positive times.

\noindent

\begin{thm}[Stability of the even multi-kinks]\label{EvenSta}~

There exist $L_0, \al_0>0$ such that for all $L > L_0$ and $ \al \in (0, \al_0)$, a well-prepared initial configuration $u_0$ for (\ref{dgKdV}), satisfying (\ref{Inv22a}), is $H^1$-stable for all positive times. 
\end{thm}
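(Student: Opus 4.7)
The strategy is to transport the stability problem to a focusing, subcritical or critical gKdV equation whose multi-soliton stability is already known, via the generalized diffeomorphism of Figure \ref{Dia}. Concretely, as already used in the proof of Theorem \ref{Geven}, the change of variables
\[
\tilde u(t,y) := \frac{1}{b_1}\bigl( u(t, y - f'(b_0) t) - b_0 \bigr)
\]
is a bijection between solutions of (\ref{dgKdV}) with $u - b_0 \in C([0,\infty), H^1(\R))$ and solutions of the focusing gKdV (\ref{fgKdV})--(\ref{tff}) in $C([0,\infty), H^1(\R))$. At each fixed time it is an affine isometry (up to the factor $|b_1|$) between $b_0 + H^1(\R)$ equipped with $\|\cdot - b_0\|_{H^1}$ and $H^1(\R)$; it sends the profile $b_0 + b_1 \sum_j Q_{c_j^0}(\cdot - x_j^0)$ to $\sum_j Q_{c_j^0}(\cdot - x_j^0)$, and the Zhidkov-type global well-posedness in $b_0 + H^1(\R)$ corresponds to the standard $H^1$ well-posedness of (\ref{fgKdV}), guaranteed by Remark~2 after Theorem \ref{Geven}.

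The first step is then to translate hypothesis (\ref{Inv22a}) into a condition on $\tilde u_0(y) := (u_0(y) - b_0)/b_1$, which satisfies
\[
\bigl\| \tilde u_0 - \sum_{j=1}^N Q_{c_j^0}(\cdot - x_j^0) \bigr\|_{H^1(\R)} \leq \alpha/|b_1|, \qquad x_j^0 - x_{j-1}^0 > L,
\]
i.e., a well-prepared $N$-soliton configuration for (\ref{fgKdV}) in the sense of Martel-Merle-Tsai \cite{MMT}. The second step is to apply \cite[Thm.~1]{MMT} (together with Section~5 of \cite{MMan} for the general subcritical case, and \cite{CMM, Com} in the $L^2$-critical case $k_0 = 4$, as noted in Remark~1 after Theorem \ref{Geven}) to obtain thresholds $L_0', \alpha_0' > 0$ such that, whenever $L > L_0'$ and $\alpha/|b_1| < \alpha_0'$, the focusing-side solution satisfies
\[
\bigl\| \tilde u(t) - \sum_{j=1}^N Q_{\tilde c_j(t)}(\cdot - \tilde x_j(t)) \bigr\|_{H^1(\R)} \lesssim \alpha + e^{-\kappa L}, \qquad \forall\, t \geq 0,
\]
for some $\kappa > 0$ and modulation parameters with $\tilde c_j(t)$ close to $c_j^0$ and $\tilde x_j(t)$ close to $x_j^0 + c_j^0 t$.

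Pulling back via $u(t,x) = b_0 + b_1 \tilde u(t, x + f'(b_0) t)$ and using that this pullback multiplies $H^1$ distances by $|b_1|$, one obtains
\[
\bigl\| u(t) - b_0 - b_1 \sum_{j=1}^N Q_{\tilde c_j(t)}(\cdot + f'(b_0) t - \tilde x_j(t)) \bigr\|_{H^1(\R)} \lesssim |b_1|(\alpha + e^{-\kappa L}), \quad t \geq 0,
\]
which, since $f'(b_0) - c_j^0 = \tilde c_j$ by (\ref{b1}), is exactly the $H^1$-stability of the even multi-kink around the prescribed initial configuration; the thresholds $L_0, \alpha_0$ in Theorem \ref{EvenSta} can be taken to be $L_0'$ and $|b_1|\alpha_0'$. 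The main obstacle I anticipate is the verification of the Weinstein slope condition $\partial_c \int Q_c^2 > 0$ along the entire range $(0, c^*)$ for the \emph{inhomogeneous} nonlinearity $\tilde f$: its Taylor expansion at $0$ begins with $s^{k_0}$, $k_0 \in \{2,3,4\}$, but picks up corrections from $f$ away from $0$, and the KdV/mKdV/$L^2$-critical slope computations only govern the leading-term regime. The threshold $c^* = c^*(f, b_0)$ in Theorem \ref{Geven} is precisely what must be chosen small enough so that every soliton $Q_{c_j^0}$ lives in that regime, and fixing it is the one place where genuine analysis of $\tilde f$ (rather than a black-box application of MMT) is required.
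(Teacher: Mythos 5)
Your proposal is correct and follows essentially the same route as the paper, which proves Theorem \ref{EvenSta} exactly as you do: conjugate via the affine map (\ref{CGg}) to the focusing equation (\ref{fgKdV})--(\ref{tff}), verify that well-preparedness and the Weinstein condition hold for $0<c<c^*$ small, apply the Martel--Merle--Tsai stability theorem \cite{MMT,MMan} for $t\geq 0$, and pull back (this is why the paper only skips the details, saying the proof is similar to that of Theorem \ref{MT1}). The only slip is your aside calling $k_0=4$ the $L^2$-critical case: the leading power is then $s^4$ (quartic, still subcritical), the critical/supercritical references \cite{Martel1,CMM,Com} in Remark~1 concern existence for special nonlinearities rather than the stability statement, so your main argument is unaffected.
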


\begin{proof}
The proof is similar to that of Theorem \ref{MT1}. We skip the details.
\end{proof}

Another striking consequence of (\ref{CGg}) is the fact that we can describe the {\bf interaction} among even kinks in some regimes, in the spirit of \cite{MMcol1,MMcol2,Mu1}. Indeed, one has the following

\begin{thm}[Inelastic interaction of even 4-kinks]\label{Collision}~

Let $b_0\in \R$ such that $(a)$-$(b)$ are satisfied, and suppose in addition that $f$ is of class $C^{k_1+1}(\R)$, where  
\be\label{nonint}
f^{(k_1)}(b_0) \neq 0 \quad \hbox{ for some } \; k_1\geq 4.
\ee
Let $c^*=c^*(f,b_0)>0$ be the corresponding threshold for the existence and stability of single solitons for (\ref{fgKdV})-(\ref{tff}). Consider $0<c_1^0 \ll c_2^0\ll c^*$, and let $U_e(t)$ be the unique 4-kink solution of (\ref{dgKdV}) satisfying (\ref{Pure}). Then $U_e(t)$ is global-in-time $H^1$-stable, but it is not pure as $t\to -\infty$.
\end{thm}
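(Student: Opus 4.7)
The strategy is to transport the question to the focusing gKdV equation (\ref{fgKdV})--(\ref{tff}) via the generalized transformation depicted in Figure~\ref{Dia}. Writing $U_e(t,x) = b_0 + b_1\, \tilde u(t, x + f'(b_0)\, t)$ with $\tilde u(t)\in H^1(\R)$, a direct computation using (\ref{tff}) shows that $\tilde u$ solves (\ref{fgKdV}); in particular the $4$-kink $U_e$ of Theorem~\ref{Geven} with $N=2$ corresponds to a $2$-soliton-type solution of (\ref{fgKdV})--(\ref{tff}) with velocities $0<c_1^0<c_2^0$.

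The $H^1$-stability is then immediate from Theorem~\ref{EvenSta}: by (\ref{Pure}), for $t\geq T_0$ large enough the profile $U_e(t)$ satisfies the well-prepared configuration (\ref{Inv22a}), so stability on $[T_0,\infty)$ holds directly. Extending the estimate to $(-\infty,T_0]$ is a standard continuation argument, using well-posedness of the Cauchy problem in $b_0+H^1(\R)$ together with the uniqueness statement of Theorem~\ref{Geven} on the compact time interval $[0,T_0]$.

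For the failure of purity as $t\to -\infty$, I argue by contradiction. Suppose $U_e$ were pure, i.e.\ satisfies the analogue of (\ref{Pure}) as $t\to -\infty$ as well. Then via the transformation, $\tilde u$ would be a \emph{pure $2$-soliton} of (\ref{fgKdV})--(\ref{tff}) in \emph{both} time directions. The hypothesis (\ref{nonint}), namely $f^{(k_1)}(b_0)\neq 0$ for some $k_1\geq 4$, translates through (\ref{tff}) into $\tilde f^{(k_1)}(0)\neq 0$, so that $\tilde f$ is not a cubic polynomial and (\ref{fgKdV}) is a genuinely non-integrable, $C^{k_1+1}$, focusing gKdV. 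In the regime $0<c_1^0\ll c_2^0\ll c^*$ the inelasticity theorems of Martel--Merle \cite{MMcol1,MMcol2} (and the extensions in \cite{Mu1}) prohibit the existence of a pure $2$-soliton for such an equation, producing the desired contradiction.

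The main obstacle will be the precise alignment of hypotheses: checking that condition (\ref{nonint}) on $f$ really yields the form of non-integrability condition on $\tilde f$ used in \cite{MMcol1,MMcol2,Mu1}, and that the ``small-over-large'' velocity regime demanded in those references is compatible with the scaling threshold $c^*(f,b_0)$ given by Theorem~\ref{Geven}. In practice one has to expand $\tilde f$ around $0$ via the chain rule, identify the lowest-order derivative of $\tilde f$ at $0$ responsible for the ``defect'' generated at the $2$-soliton collision, and verify its non-vanishing as a non-trivial polynomial expression in $f^{(j)}(b_0)$ for $j\leq k_1$; the threshold $c^*$ and the smallness of $c_1^0/c_2^0$ must then be shrunk if necessary so that the perturbative framework of \cite{MMcol1,MMcol2,Mu1} applies verbatim.
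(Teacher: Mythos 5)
Your overall route coincides with the paper's: pass to the focusing equation (\ref{fgKdV})--(\ref{tff}) through the transformation (\ref{CGg}), observe that (\ref{nonint}) makes $\tilde f$ a non-integrable perturbation of the integrable nonlinearities $s^2$, $s^3$, $s^2+\beta s^3$, invoke the classification of the $2$-soliton collision in the regime $0<c_1^0\ll c_2^0\ll c^*$ from \cite{MMcol1,MMcol2,Mu1}, and transform back. Two remarks. The ``main obstacle'' you flag at the end is actually immediate: since the change of unknown in (\ref{CGg}) is affine in $u$, differentiating (\ref{tff}) gives $\tilde f^{(j)}(0)=-b_1^{\,j-1}f^{(j)}(b_0)$ for every $j\geq 2$, with no polynomial mixing of lower-order derivatives of $f$. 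This is exactly the case-by-case Taylor expansion the paper writes down for $k_0=2,3,4$: the first term of $\tilde f$ beyond the quadratic/cubic (Gardner-integrable) part has coefficient $-\frac{1}{k_1!}b_1^{k_1-1}f^{(k_1)}(b_0)\neq 0$, which is precisely the non-integrability hypothesis needed in \cite{MMcol1,MMcol2,Mu1}; no further shrinking or verification is required beyond taking $c_1^0\ll c_2^0\ll c^*$ as in the statement.

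The genuine gap is in your stability argument. Theorem \ref{EvenSta} only yields stability \emph{forward} in time from a well-prepared configuration, and a ``standard continuation argument'' based on well-posedness and uniqueness can only control a \emph{compact} time interval; it cannot produce uniform $H^1$ bounds on the unbounded interval $(-\infty,T_0]$, during which (run backwards) the two solitons undergo a collision. Global-in-time $H^1$-stability through the collision is exactly the nontrivial point in the non-integrable setting (the paper explicitly disclaims it for general non-integrable gKdV outside perturbative regimes), and it is supplied by the stability-of-collision results of \cite{MMcol1,MMcol2} in the regime $c_1^0\ll c_2^0$ --- that is, by the same classification theorems you already invoke for non-purity, which give both the global $H^1$-stability and the failure of purity as $t\to-\infty$ simultaneously. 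Replacing your continuation step by this citation, and keeping your contradiction argument for non-purity (which matches the paper's), the proof is complete and agrees with the paper's.
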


\noindent
{\bf Remarks.} 

\noindent
1. The condition (\ref{nonint}) allows us to rule out the integrable cases $f(s) =\al s^2+\beta s^3$, $\al, \beta\in \R$.

\noindent
2. By \emph{not pure as $t\to -\infty$} in Theorem \ref{Collision} we mean that (\ref{lim1}) cannot happen: for any $x_j^-\in \R$,
$$
\liminf_{t\to - \infty}  \big\| U_{e} (t) -b_0 - \sqrt{\beta} \sum_{j=1}^{2} Q_{c_j^0,\beta} (\cdot + \tilde c_j t + x_j^{-} ) \big\|_{H^1(\R)} >0.
$$

\noindent
3. The collision problem has been recently considered in the case of the NLS equation: see e.g. Holmer-Marzuola-Zworski \cite{HMZ1,HMZ2}, Perelman \cite{Pe}, and references therein.

\medskip

\begin{proof}[Proof of Theorem \ref{Collision}]
We prove Theorem \ref{Collision} in Section \ref{2.5}. The main idea is that condition (\ref{nonint}) is the key point to invoke \cite{MMcol1} and our result \cite[Theorem 1.3]{Mu1} to equations (\ref{fgKdV})-(\ref{tff}), classifying the nonlinearities for which the 2-soliton collision is inelastic.
\end{proof}

\medskip

Another collision result is the following remarkable consequence of the recent Martel-Merle's papers describing the interaction of $(i)$ two very different \cite{MMcol1}, and $(ii)$ two nearly equal solitons of the quartic gKdV equation \cite{MMcol3}.

\begin{cor}[Inelastic interaction of even 4-kink solutions, quartic case]\label{Collision2}~

Let $b_0\in \R$, and $f_{b_0}(s) := s^4 -4b_0s^3 +6b_0^2 s^2$. Then $f_{b_0}$ satisfies $(a)$-$(b)$ above, one has $c^*(f_{b_0},b_0) =+\infty$ and for {\bf any} $0<c_1^0 \ll c_2^0 $, the corresponding 4-kink $U_e(t)$ constructed in Theorem \ref{Geven}, and pure as $t\to +\infty$, is globally $H^1$-stable, but it is not pure as $t\to -\infty$. The same result is valid in the regime $0<c_1^0 <c_2^0$,  with $|c_1^0- c_2^0|\ll 1$.
\end{cor}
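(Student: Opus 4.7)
The plan is to reduce the statement to known results for the focusing quartic gKdV equation by an explicit computation of the transformed nonlinearity $\tilde f$, and then to invoke Theorem \ref{Geven}, Theorem \ref{EvenSta}, together with Martel--Merle \cite{MMcol1,MMcol3}.

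First, I verify hypotheses $(a)$--$(b)$ and compute the index $k_0$. A direct calculation gives $f_{b_0}''(s)=12(s-b_0)^2$, $f_{b_0}'''(s)=24(s-b_0)$ and $f_{b_0}^{(4)}(s)=24$. Thus $f_{b_0}''(b_0)=f_{b_0}'''(b_0)=0$ but $f_{b_0}^{(4)}(b_0)=24\neq 0$, so $(a)$ holds with $k_0=4$, and $(b)$ is vacuous. The condition (\ref{nonint}) is also satisfied with $k_1=4$. From formula (\ref{b1}) one obtains $b_1=(-24/24)^{1/3}=-1$, and plugging into (\ref{tff}) together with the easily computed values $f_{b_0}(b_0)=3b_0^4$, $f'_{b_0}(b_0)=4b_0^3$, and $f_{b_0}(b_0-s)=3b_0^4-4b_0^3 s+s^4$ yields, after cancellation, $\tilde f(s)=s^4$. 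Hence (\ref{fgKdV}) is exactly the focusing quartic (hence $L^2$-subcritical) gKdV equation.

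Second, I use this identification to get existence, uniqueness and stability for arbitrary speeds. Since $\tilde f(s)=s^4$ is $L^2$-subcritical, Weinstein's criterion (\ref{WC}) holds for every soliton scaling, so the threshold in Theorem \ref{Geven} satisfies $c^*(f_{b_0},b_0)=+\infty$. Consequently, for any $0<c_1^0<c_2^0$, Theorem \ref{Geven} provides a unique $4$-kink $U_e(t)$ satisfying (\ref{Pure}) as $t\to+\infty$, and Theorem \ref{EvenSta} (with arbitrarily small $\alpha$ and arbitrarily large $L$, tuned using (\ref{Pure}) at a sufficiently late time as a well-prepared data, propagated backwards in time by reversibility) yields global-in-time $H^1$-stability of $U_e(t)$.

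Third, I derive the lack of purity as $t\to-\infty$ from the inelasticity results for the focusing quartic gKdV. Via transformation (\ref{CG}) generalized as in Theorem \ref{Geven}, a pure-as-$t\to-\infty$ behavior (\ref{lim1}) for $U_e$ would be equivalent, after the change of variables $u\sim b_0+b_1\tilde u$, to the existence of a solution $\tilde u$ of the focusing quartic gKdV which is \emph{pure} as a $2$-soliton both as $t\to+\infty$ and as $t\to-\infty$; in other words, the $2$-soliton interaction would be elastic. In the regime $0<c_1^0\ll c_2^0$ this is ruled out by Martel--Merle \cite{MMcol1}, and in the regime $|c_1^0-c_2^0|\ll 1$ by the nearly-equal soliton analysis of Martel--Merle \cite{MMcol3}. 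Taking the contrapositive and translating back through the diffeomorphism (which preserves $H^1$-distances up to the constant $|b_1|=1$) gives exactly the non-purity assertion for $U_e(t)$.

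The main technical obstacle is the careful translation between the focusing and defocusing pictures in the third step: one must confirm that the generalized transformation of Figure \ref{Dia} is a genuine bijection at the level of asymptotic profiles, so that elasticity on the focusing side corresponds precisely to purity on the defocusing side. This hinges on the uniqueness part of Theorem \ref{Geven} (so that a hypothetical pure $U_e$ at $-\infty$ actually lifts to a pure $2$-soliton of (\ref{fgKdV})) and on the fact that $b_1=-1\neq 0$, which ensures the identification of the focusing gKdV nonlinearity is not degenerate; once both ingredients are in place, the conclusion follows verbatim from \cite{MMcol1,MMcol3}.
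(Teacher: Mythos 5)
Your first and third steps reproduce the paper's own argument: the computation $k_0=4$, $b_1=-1$, $\tilde f(s)=s^4$ (hence $c^*(f_{b_0},b_0)=+\infty$), and the translation, via the affine transformation and the uniqueness statement of Theorem \ref{Geven}, of ``purity as $t\to-\infty$'' for $U_e$ into elasticity of the focusing quartic $2$-soliton, which is then excluded by Martel--Merle \cite{MMcol1} when $0<c_1^0\ll c_2^0$ and by \cite{MMcol3} when $|c_1^0-c_2^0|\ll 1$. This is exactly how the paper proves the corollary (its ``proof'' is the remark that $\tilde f_{b_0}(s)=s^4$, combined with the mechanism in the proof of Theorem \ref{Collision}).

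There is, however, a genuine gap in your second step. Theorem \ref{EvenSta} gives stability only \emph{forward} in time from a well-prepared configuration, and the symmetry $(t,x)\mapsto(-t,-x)$ does not upgrade this to global-in-time control: starting from a late, well-separated configuration and running backward, the two solitons approach and collide, and the regime through and before the collision is precisely what forward stability of decoupled solitons cannot see; applying the symmetry merely relabels the time direction without removing the collision. (The analogous trick does work for the even multi-kinks of the integrable model, Corollary \ref{StaG}, but only because the explicit multi-soliton $V^{(N)}$ describes both time directions; for the non-integrable quartic nonlinearity no such object exists --- indeed the whole point here is that the solution is \emph{not} pure at $-\infty$.) The global $H^1$-stability of the $2$-soliton structure across the interaction is itself a theorem of Martel--Merle: \cite{MMcol1,MMcol2} in the regime $0<c_1^0\ll c_2^0$, and \cite{MMcol3} for nearly equal speeds. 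These references must be invoked for the stability assertion as well, not only for the inelasticity; with that substitution your proof coincides with the paper's.
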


\noindent
{\bf Remark.}
The last result is a consequence of the fact that from (\ref{fgKdV}) and (\ref{tff}), one has $\tilde f_{b_0}(s) = s^4,$ for which solitons $Q_c$ exist for any $c>0$. Note in addition that for any $b_0\neq 0$, the corresponding nonlinearity $f_{b_0}$ does not allow to perform the standard transformation $u\to -u$, which links the defocusing and focusing quartic equations. Moreover, the quadratic term in $f_{b_0}$ is always of defocusing nature. Therefore, the above result is completely new for $b_0\neq 0$.  

\bigskip

Finally, we consider the case of generalized, odd multi-kink solutions. First of all, we have to recall some important facts. For more details, the reader may consult the monograph of Zhidkov \cite{Z}.

\medskip

Let $\varphi^- , \varphi^+ \in \R$, with $\varphi^-<\varphi^+$, and let $c>0$, $x_0\in \R$ be fixed numbers. Let $f$ be the nonlinearity considered in (\ref{dgKdV}). Suppose that the following hypotheses hold:
\ben
\item[\bf (c)] one has 
$$
c\varphi^- -f(\varphi^-) = c\varphi^+ -f(\varphi^+);
$$
\item[\bf (d)] the function $\displaystyle{F(s) := \int_{\varphi^-}^s (c\sigma -f(\sigma) -c\varphi^- + f(\varphi^-))d\sigma}$ satisfies
$$
F(\varphi^+) =0, \quad \hbox{ and } \quad F(s)<0, \quad \hbox{ for all } s\in (\varphi^-,\varphi^+);
$$
\item[\bf (e)]  $f'(\varphi^\pm) >c$ (non degeneracy condition).\footnote{This condition ensures that the continuous spectrum of the linearized operator $\mathcal L := -\partial_x^2 - c + f'(\varphi_c)$ is bounded from below, away from zero, and the kernel of $\mathcal L$ is spawned by its ground state $\varphi_c'>0$.}
\een
Then there exists a monotone, generalized kink solution of (\ref{dgKdV}), of the form
\be\label{Gkink}
u(t,x) := \varphi_c (x+ ct + x_0),  \qquad \lim_{s\to \pm\infty} \varphi_c(s) =\varphi^{\pm},
\ee
and $\varphi_c$ satisfies
$$
\varphi_c'' + c\varphi_c -f(\varphi_c) = c\varphi^- - f(\varphi^-), \quad \varphi_c'>0, \; \varphi_c' \in H^1(\R).
$$
Moreover, this solution satisfies, for some constants $K,\ga>0$, the following estimates
$$
|\varphi_c (s)- \varphi^\pm| + |\varphi_c' (s)| \leq Ke^{-\ga|s|}.
$$
Finally, but not least important, the condition $\varphi_c'>0$ implies that (\ref{Gkink}) is $H^1$-stable (Zidkhov \cite[p. 91]{Z}, Merle-Vega \cite{MV}). 

\medskip

\noindent
{\bf Remarks.}

\noindent
1. For the sake of clarity, let us mention that in the integrable case $f(s)=s^3$, given $\varphi^-\in \R$ and $c>0$, one has that for $\varphi^+>\varphi^-$, conditions {\bf (c)}-{\bf (d)} and {\bf (e)} lead to necessary conditions $\varphi^- =-\sqrt{c}$ and $\varphi^+ = \sqrt{c} $, namely (\ref{Sol}).

\medskip

\noindent
2. It is important to point out that the multi-kink solution $U_e$ constructed in Theorem \ref{Geven} cannot be decomposed as the sum of several kinks of the form (\ref{Gkink}), at least in a general situation (compare e.g. with (\ref{2K})). Therefore, we believe that (\ref{Pure}) and (\ref{PureOdd}) below are the correct ways to define generalized multi-kink solutions, in the case of defocusing gKdV equations.   

\medskip

\noindent
3. The Cauchy problem associated to (\ref{dgKdV}) with initial condition satisfying $u(0) - \varphi_c  \in H^1(\R)$, is locally well-posed in the class $\varphi_c(\cdot +ct) +  H^1(\R)$. This result is consequence of the analysis carried out by Merle and Vega in \cite{MV} and the fact that $f$ is regular enough. In what follows, we will only consider stable solutions, then globally well defined.

\bigskip

Our next objective is to generalize the Zhidkov's results to the case of $(2N-1)$-kinks, as follows:

\medskip

\begin{thm}[Existence and uniqueness of generalized odd multi-kinks]~

Let $N\geq 2$ and $\varphi^-, \varphi^+ \in \R$ be such that ${\bf(a)}$-${\bf(b)}$ hold with $b_0 := \varphi^-$, and ${\bf(c)}$-${\bf(e)}$ above are satisfied. There exists $c^*:=c^*(f, \varphi^-)>0$ such that, for all $0<c_1^0<c_2^0<\ldots <c_{N}^0<c^*$, and $x_1^-, x_2^-, \ldots, x_N^-\in \R$, the following holds. There exists a unique solution $U_o=U_o(t) $ of (\ref{dgKdV}) such that $u(t) - \varphi_c(\cdot + ct) \in  H^1(\R)$, and it satisfies
\be\label{PureOdd}
\lim_{t\to +\infty} \big\|U_o(t) - \varphi_c(\cdot + ct + x_N^-) -b_1\sum_{j=1}^N Q_{c_j^0}(\cdot + \tilde c_j t + x_j^-) \big\|_{H^1(\R)} =0,
\ee
with $b_1, \tilde c_j$ as in (\ref{b1}), and $\tilde u(t,y) = Q_{c_j}(y-c_j t)$ is a soliton of the focusing gKdV equation (\ref{fgKdV})-(\ref{tff}).
\end{thm}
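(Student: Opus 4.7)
My approach is to combine the generalized ansatz of type (\ref{CG2}) with the backward multi-soliton construction of Martel~\cite{Martel1,MMT}. The starting point is to write
\be
U_o(t,x)\,:=\,\varphi_c(x+ct+x_N^-)\,+\,b_1\,\tilde u\big(t,\,x+f'(\varphi^-)t\big),
\ee
substitute into (\ref{dgKdV}), and Taylor-expand $f(\varphi_c+b_1\tilde u)$ around $\varphi^-$ up to order $k_0$, exactly as in the proof of Theorem~\ref{Geven}. The drift velocity $f'(\varphi^-)$ is chosen to absorb the linear transport produced by the non-constant linearization at $\varphi^-$, and the identity (\ref{tff}) converts the lowest-order nonlinear terms into $\tilde f(\tilde u)$; what remains is a perturbed focusing gKdV,
\be
\tilde u_t+(\tilde u_{yy}+\tilde f(\tilde u))_y\,=\,\partial_y R[\varphi_c,\tilde u],
\ee
where $R$ vanishes whenever $\varphi_c\equiv\varphi^-$. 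The Zhidkov exponential decay $|\varphi_c(s)-\varphi^-|+|\varphi_c'(s)|\lesssim e^{-\ga|s|}$ as $s\to-\infty$, a consequence of condition~{\bf(e)}, gives the pointwise bound $|R[\varphi_c,\tilde u](t,y)|\lesssim e^{-\ga|y-(f'(\varphi^-)-c)t-x_N^-|}(|\tilde u|+|\tilde u|^2)$.

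Next, I fix $c^*=c^*(f,\varphi^-)\in(0,\,f'(\varphi^-)-c)$ small enough that (i) focusing solitons of (\ref{fgKdV})--(\ref{tff}) exist and satisfy the Weinstein slope condition (\ref{WC}) for every $c_j^0\in(0,c^*)$; and (ii) each drift velocity $\tilde c_j=f'(\varphi^-)-c_j^0$ is strictly greater than $c$, so that in the ambient $x$-frame every Gardner-type soliton moves to the left strictly faster than the kink and therefore stays in the region $\{\varphi_c\sim\varphi^-\}$ for $t\gg 1$. Property~(ii) guarantees that the spatial overlap between $R$ and each translate $Q_{c_j^0}(\cdot+\tilde c_j t+x_j^-)$ decays exponentially in $t$, and that the solitons separate linearly among themselves.

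The heart of the proof then mimics Martel~\cite{Martel1}. Choose $T_n\to+\infty$ and define the terminal datum
\be
U_o^n(T_n,x)\,:=\,\varphi_c(x+cT_n+x_N^-)+b_1\sum_{j=1}^N Q_{c_j^0}(x+\tilde c_j T_n+x_j^-).
\ee
Using the Merle-Vega local Cauchy theory~\cite{MV}, adapted to the regular nonlinearity $f$ provided by condition {\bf(a)}, solve (\ref{dgKdV}) backwards in the affine space $\varphi_c(\cdot+ct)+H^1(\R)$ and set $\tilde u^n$ via the ansatz. The central uniform estimate to prove is
\be
\big\|\tilde u^n(t)-\sum_{j=1}^N Q_{c_j^0}(\cdot+\tilde c_j t+x_j^-)\big\|_{H^1(\R)}\,\leq\,C\,e^{-\ga_0 t},\quad T_0\leq t\leq T_n,
\ee
with $T_0,C,\ga_0$ independent of~$n$. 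This is achieved by adapting the $N$ localized, almost-conserved mass-energy linear combinations of~\cite{MMT} to the inhomogeneous equation: each one is monotone up to an extra contribution bounded by the time integral of $R$ against a suitable spatial cut-off, and by~(ii) above these integrals are controlled by $Ce^{-\ga_0 t}$. Weak-$H^1$ compactness then extracts a limit $\tilde u^\infty$ and, via the ansatz, a global solution $U_o$ with the prescribed asymptotics (\ref{PureOdd}). Uniqueness follows from the same estimate: any other $V$ verifying (\ref{PureOdd}) yields $V-U_o\to 0$ in $H^1$ as $t\to+\infty$, and the combined $H^1$-stability of the single kink (Zhidkov~\cite{Z}, Merle-Vega~\cite{MV} via {\bf(c)}-{\bf(e)}) and of the focusing $N$-soliton component~\cite{MMT} forces $V\equiv U_o$.

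The main obstacle, as in every member of this family of constructions, is the uniform estimate above. The equation for $\tilde u^n$ is genuinely inhomogeneous, with $R$ of size one in the kink region; only the exponential separation between the kink trajectory and each soliton trajectory, enforced by the threshold $c^*$, keeps the time-integrated error bounded. Once this monotonicity is secured, the weak compactness, the reconstruction via the ansatz, and the stability-based uniqueness all proceed along the classical lines of~\cite{Martel1,MMT} and of the earlier theorems of this paper.
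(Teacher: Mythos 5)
Your overall architecture coincides with the paper's: the ansatz $U_o=\varphi_c(\cdot+ct+x_N^-)+b_1\tilde u(t,\cdot+f'(\varphi^-)t)$, the reduction to a perturbed focusing gKdV $\tilde u_t+(\tilde u_{yy}+\tilde f(\tilde u))_y=[F]_y$ as in (\ref{eqtu})--(\ref{tff}), the choice of $c^*$ so that every soliton lags behind the kink (this velocity separation is indeed the mechanism behind Lemma \ref{th:2}), and a Martel-type backward construction with uniform estimates, compactness and a uniqueness step, exactly as in Theorem \ref{EoMK} and Proposition \ref{Pr0}. The genuine gap is in how you control the kink inside the uniform estimates. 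Your displayed pointwise bound $|R(t,y)|\lesssim e^{-\ga|y-(f'(\varphi^-)-c)t-x_N^-|}(|\tilde u|+|\tilde u|^2)$ is false to the right of the kink, where $\varphi_c-\varphi^-\to\varphi^+-\varphi^-\neq 0$; the correct statement is only $F=O(|\varphi_c-\varphi^-||\tilde u|)$, with decay to the left. Consequently the part of the source which is linear in the error, $[f'(\varphi_c)-f'(\varphi^-)]\,w$ with $w:=\tilde u-\sum_j Q_{c_j^0}(\cdot-c_j^0t+x_j^-)$, carries an order-one, step-like coefficient on a half-line, and $w$ is not a priori localized away from the kink region. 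Its contribution to the time derivative of your ``almost-conserved mass-energy combinations'' of $\tilde u$ is therefore $O(\|w(t)\|_{H^1}^2)$ with a constant which is not small, not $O(e^{-\ga_0 t})$ as you assert; a backward bootstrap from $w(T_n)=0$ with such a term does not close (one would need that constant to be dominated by the decay rate).

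The paper avoids precisely this by not treating the kink as an inhomogeneity at the energy level: the Lyapunov functional is the conserved energy (\ref{E}) of the full solution $u$, so the kink-dependent quadratic term (the analogue of $-3(c-\varphi_c^2)z^2$ in (\ref{Ft}), i.e.\ $(f'(\varphi_c)-c)z^2$ here) sits inside the quadratic form rather than in the error, and coercivity follows from the Zhidkov positivity guaranteed by {\bf(c)}--{\bf(e)} (Lemma \ref{Zi}) combined with the localized Weinstein coercivity near each soliton (Appendix \ref{B}, (\ref{posit})), the orthogonality to $\varphi_c'$ being gained by modulating the kink translation $x_N(t)$ as in Lemma \ref{dec}. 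Only the masses (\ref{Mjj}) of $\tilde u$ localized to the left of each soliton are used as almost-monotone quantities, and for those your velocity-separation condition does make the error terms exponentially small. You invoke the kink's stability only in the uniqueness step, but it must enter the uniform estimates. Finally, ``combined $H^1$-stability forces $V\equiv U_o$'' is too quick: stability only yields permanent $\delta$-closeness; uniqueness requires upgrading (\ref{PureOdd}) to exponential convergence (as in Lemma \ref{Edec}) and then running the argument of \cite{Martel1} on the difference.
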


The proof of this result follows the lines of the proof of Theorem \ref{EoMK}, see also Remark 2 in \cite{Martel1}. Let us recall that in this situation, and following the notation of (\ref{fgKdV})-(\ref{tff}), equation (\ref{eqtu}) now becomes
$$
\tilde u_t + (\tilde u_{yy} + \tilde f(\tilde u))_y = [F(t,y)]_y,  \quad \tilde u\in H^1(\R),
$$
with
\bee
F(t,y) & := & \frac 1{b_1}[ f(\varphi_c + b_1 \tilde u) -f(\varphi_c)  -f(\varphi^- + b_1\tilde u) + f(\varphi^-) ] \\
& = & \frac 1{b_1}\int_0^{b_1\tilde u} \int_{\varphi^-}^{\varphi_c} f''(t+s)dsdt.
\eee
Therefore, since $u\in L^\infty(\R)$, one has $F(t,y) = O(|\varphi_c -\varphi^-||\tilde u|)$, which is enough to conclude.

\medskip 

Finally, we say that an initial configuration $u_0$, perturbation of a kink solution $\varphi_c$, is {\bf well-prepared} if for $L, \al>0$, $0<c_1^0<\ldots <c_{N-1}^0<c^*$, and $ x_1^0 <  x_{2}^0<\ldots <  x_N^0$, one has
\be\label{Inv33}
\big\|  u_0 - \varphi_c(\cdot - x_N^0) - b_1\sum_{j=1}^N Q_{c_j^0} (\cdot - x_j^0 ) \big\|_{H^1(\R)} \leq \al, \quad  x_{j}^0 >  x_{j-1}^0 + L, \quad j=2, \ldots, N,
\ee
with $b_1$ defined in (\ref{b1}). In addition, by taking $c^*$ smaller if necessary, we assume that each soliton $ Q_{c_j^0}$ is stable in the sense of Weinstein (\ref{WC}).

\begin{thm}[Stability of the odd multi-kinks]\label{OddSta}~

There exist $L_0, \al_0>0$ such that for all $L>L_0$ and $\al\in (0,\al_0)$, a well-prepared initial data $u_0$ for (\ref{dgKdV}), satisfying (\ref{Inv33}), is $H^1$-stable for all positive times.
\end{thm}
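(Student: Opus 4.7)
The plan is to reduce the statement to two known stability results: the $H^1$-stability of a single kink of the form (\ref{Gkink}) due to Zhidkov \cite{Z} and Merle--Vega \cite{MV}, and the Martel--Merle--Tsai stability \cite{MMT,MMan} of sums of stable, well-decoupled solitons for the focusing gKdV equation (\ref{fgKdV})--(\ref{tff}). The bridge between the two regimes is precisely the generalized version of (\ref{CG2}) summarized in Figure \ref{Dia}, and the argument essentially mirrors that of Theorem \ref{MT2}.

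First, I would decompose the solution as
\begin{equation*}
u(t,x) = \varphi_c\bigl(x + ct + \rho(t)\bigr) + b_1\, \tilde u\bigl(t,\, x + f'(\varphi^-) t\bigr),
\end{equation*}
where $\rho(t)$ is a modulation parameter fixed by the usual orthogonality condition $\langle u(t,\cdot) - \varphi_c(\cdot + ct + \rho(t)),\, \varphi_c'(\cdot + ct + \rho(t))\rangle_{L^2} = 0$, and $\tilde u(t,\cdot) \in H^1(\R)$ is the remainder written in the Galilean frame in which the Gardner--type solitons are asymptotically stationary modulo drift. As in the paragraph preceding the theorem, $\tilde u$ then solves (\ref{fgKdV})--(\ref{tff}) with an extra source of the form $[F(t,y)]_y + \dot\rho(t)\,G(t,y)$, where $|F(t,y)| = O\bigl(|\varphi_c(y - f'(\varphi^-)t + ct + \rho(t)) - \varphi^-|\cdot|\tilde u|\bigr)$ and $G$ are both exponentially localized around the centre of the kink.

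Second, by the well-prepared condition (\ref{Inv33}), $\tilde u(0,\cdot)$ is $\alpha$-close in $H^1$ to a sum $b_1\sum_{j=1}^{N-1}Q_{c_j^0}(\cdot - \tilde x_j^0)$ of $(N-1)$ stable solitons of (\ref{fgKdV})--(\ref{tff}) with pairwise gaps $\geq L$, and the rightmost one sits at distance $\geq L$ to the left of the kink core. Reducing $c^*$ if necessary so that $\tilde c_j = f'(\varphi^-) - c_j^0 > c$ for every $j \leq N-1$, each soliton drifts away from the kink at speed at least $\tilde c_j - c > 0$, so $F$ is pointwise exponentially small, of order $e^{-\gamma(L + (\tilde c_j - c)t)}$, on the support of the $j$-th soliton. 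The standard Martel--Merle--Tsai almost-conserved Lyapunov functional for (\ref{fgKdV})--(\ref{tff}), built out of weighted localizations of the mass and energy by smooth cutoffs moving with averaged velocities between consecutive solitons (cf.\ \cite[\S 5]{MMan}), is then monotone up to an error of size $\alpha^2 + e^{-\gamma L}$ produced by the source terms, which combined with the Weinstein coercivity (\ref{WC}) for each $Q_{c_j^0}$ yields the standard bootstrap
\begin{equation*}
\Bigl\|\tilde u(t) - b_1\sum_{j=1}^{N-1} Q_{c_j^0}\bigl(\cdot + \tilde c_j t + \mu_j(t)\bigr)\Bigr\|_{H^1(\R)} \lesssim \alpha + e^{-\gamma L}, \qquad t \geq 0,
\end{equation*}
with continuous shifts $\mu_j(t)$. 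In parallel, the kink modulation $\rho(t)$ is controlled by Zhidkov's argument: condition \textbf{(e)} gives coercivity of the Hessian of $E[u] - cM[u]$ on $(\varphi_c')^\perp$, and this coercivity survives the presence of $\tilde u$ because $\tilde u$ is concentrated in the region where $\varphi_c \approx \varphi^-$, so its interaction with the kink core is exponentially small. Assembling both estimates and choosing $L_0$ large, $\alpha_0$ small, gives (\ref{Inv33}) for every $t>0$, which is the claim.

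The hardest point, I expect, is the transient layer $t \in [0, T_L]$ during which the rightmost soliton still overlaps non-trivially with the tail of the kink: there the source term $F$ is not yet of size $e^{-\gamma L}$, the modulation ODE for $\rho$ couples genuinely to the soliton parameters $\mu_j$, and the MMT monotonicity must be closed by absorbing the interaction errors against the initial smallness $\alpha$. Once this transient is passed the two subsystems decouple effectively and the argument reduces to the two classical stability results quoted above.
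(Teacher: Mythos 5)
Your proposal is correct and takes essentially the same route as the paper: the paper proves this theorem simply by repeating the proof of Theorem \ref{MT2} (i.e.\ Proposition \ref{Pr0}) with single-soliton stability as extra input — decomposition through the generalized transformation (\ref{CGg})/(\ref{CG2}) into kink plus a perturbed focusing gKdV remainder $\tilde u$ with source terms $[F]_y$ and $x_N'(t)\varphi_c'$, modulation of the kink translation, almost-monotone localized masses for $\tilde u$, and coercivity combining Zhidkov's kink estimate with Weinstein-type localized coercivity for the solitons — which is exactly your scheme. The only cosmetic difference is that the paper keeps the global energy $E[u]$ of the original solution and localizes only the masses of $\tilde u$ (on the left of each soliton), whereas you localize both mass and energy \`a la Martel--Merle--Tsai; both versions close the bootstrap in the same way.
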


This result is proved following the lines of the proof of Theorem \ref{MT2}, using in addition that single solitons are stable. We skip the details. 

\bigskip

\noindent
{\bf Final remarks.}

\noindent
1. We recall that the collision problem in the case of odd multi-kink solutions remains an interesting open question. In addition, we believe that our approach introduces new ideas to deal with the dynamics of kink solutions in the $L^2$-critical and supercritical setting, by using a suitable focusing counterpart.

\smallskip

\noindent
2. Let us mention that a similar transformation to (\ref{CG2}) can be introduced in the cases of the $\phi^4$ and sine-Gordon models, with different results. For the first equation,
\be\label{p4}
u_{tt}-u_{xx} = u(1-u^2), \quad u(t,x) \in \R,
\ee
it is well know that $u(t,x) = \varphi(x)$, with $\varphi$ given in (\ref{Q}), is a stationary kink solution. However, the transformation $u(t,x) := \varphi(x) - \frac 23 \tilde u(\sqrt 2 t,\sqrt 2 x)$  
leads to the following equation for $\tilde u(s,y)$
$$
\tilde u_{ss}- \tilde u_{yy} + \tilde u - \tilde u^2 +  \frac 29 \tilde u^3 = \frac 32(1-\varphi^2)\tilde u - (1-\varphi)\tilde u^2.
$$
Looking for an approximate, localized and stationary solution, we arrive to study the elliptic equation associated to the Gardner nonlinearity:
$$
\tilde u_{yy} -\tilde u + \tilde u^2 - \frac 29 \tilde u^3=0, \quad \tilde u\in H^1(\R).
$$
This is a Gardner elliptic equation with parameters $c:=1$ and $\beta := \frac 29$ (cf. (\ref{Id1})), for which the only localized, positive solution is zero. Therefore it is not possible (at least formally) to attach to the kink solution of (\ref{p4}) suitable soliton-like structures of the form (\ref{SolG}). 

Now we perform the same analysis in the case of the sine-Gordon equation
\be\label{SG}
u_{tt} - u_{xx} +\sin u =0, \quad u(t,x) \in \R,
\ee
and its kink solution $\varphi(x) := 4\arctan e^x$. Indeed, using the transformation $u(t,x): = \varphi(x) + \tilde u(t,x)$, we arrive to the following perturbed, sine-Gordon equation
$$
\tilde u_{tt}-\tilde u_{xx} + \sin \tilde u = (1-\cos \varphi) \sin \tilde u +(1-\cos v)\sin \varphi,
$$
where the right hand side is small if we consider $\tilde u$ as a localized solution of (\ref{SG}) in the region where $\varphi \sim 0$. We can put for instance, a sum of \emph{breather} solutions, provided this solution is stable, which is an open problem. We expect to consider some of these problems in a forthcoming publication.

\bigskip

\subsection*{Idea of the proofs} Theorems \ref{UniE}, \ref{MT1}, \ref{Geven} and \ref{EvenSta} can be deduced from Martel \cite{Martel1} and Martel-Merle-Tsai \cite{MMT,MMan}. We recall that, without using transformation \ref{CG},  these results were unable to be tackled down by using any direct method.

\medskip

We prove Theorem \ref{MT2} in Section \ref{3}. The proof is based in the approach introduced in \cite{MMT} in order to describe the stability in $H^1(\R)$ of $N$ decoupled solitons. However, in this opportunity we face several new problems since the kink solution and the Gardner solitons are in strong interaction through the dynamics. Moreover, the mass (\ref{M}) cannot be used to control the Gardner solitons, as has been done in \cite{MMT}. This means that Theorem \ref{MT2} cannot be deduced from the standard Zhidkov \cite{Z} and Martel-Merle-Tsai \cite{MMT} results, and we need new ideas. In that sense, the transformation (\ref{CG2}) is the first step --and the more important one-- to understand the interaction among kinks as actually localized, soliton-like interactions.

\medskip

Let us be more precise. Using the energy (see (\ref{E})) of the solution $u(t)$, one can control with no additional difficulties the kink solution. This is a consequence of the non negative character of the linearized operator around the kink solution, see \cite{Z,MV} for more details. However, this quantity is far from being enough to control the behavior of the Gardner solitons. We overcome this difficulty by using the transformation (\ref{CG2}), which introduces a new function $\tilde u(t)$, almost solution of a Gardner-like equation (cf. (\ref{eqtu})). It turns out that the perturbative terms on the right hand side of (\ref{eqtu}) can be controlled provided the solitons are far from the center of the main kink solution, which holds true if we assume that  the initial configuration is well prepared (see Proposition \ref{Pr0}). Additionally, we introduce a new, {\bf almost conserved mass} (see (\ref{Mjj})) for the portion on the left of the solution $\tilde u$,  which allows to control each Gardner soliton by separated. Using this, we avoid the problem of using the natural mass (\ref{M}), which is very bad behaved for $H^1(\R)$ perturbations. This approach is completely general and can be adapted to prove Theorem \ref{OddSta}. No additional hypotheses are needed, only the single stability of each generalized soliton component of the multi-kink solution. The proof of the asymptotic stability property generalizes the argument used \cite{MMan}, this time to the function $\tilde u$.

\medskip

Finally, concerning Theorem  \ref{EoMK} --proved in Section \ref{4}--, we extend the result of Martel \cite{Martel1}. Most of the proof is similar to the proof of Theorem \ref{MT2}, but estimates are easier to carry out since we do not need to control the scaling parameters of each Gardner solitons.

\bigskip

\section{Proof of Theorems \ref{UniE} and \ref{MT1}}\label{2}

\medskip

\noindent
{\bf Proof of Theorem \ref{UniE}}.
Let $\tilde U$ be another solution of (\ref{mKdV}) satisfying (\ref{lim1}). Then,  from (\ref{CG}),
$$
\tilde V(t,y):= \frac{1}{ \sqrt{\beta}} \Big[ b -\tilde U(t, y -\frac t{3\beta}) \Big],
$$
is solution of the Gardner equation (\ref{Ga}) and satisfies (\ref{Minus}). From the uniqueness of $V^{(N)}$ \cite{Martel1}, one has $\tilde V \equiv V^{(N)}$, and therefore $\tilde U \equiv U_{e}$.

\medskip

\noindent
{\bf Proof of Theorem \ref{MT1}}.
First of all, let us recall the Martel-Merle-Tsai's stability result \cite{MMT}:

\begin{thm}[$H^1$-stability of the sum of $N$-Gardner solitons, \cite{MMT,MMan}]\label{MT0}~

Let $N\geq 2$, $\beta>0$ and $0<c_1^0<c_2^0<\ldots <c_N^0<\frac{2}{9\beta}$ be such that (\ref{WC}) holds for all  $j=1,\ldots, N.$ There exists $\tilde \al_0, \tilde A_0, \tilde L_0, \tilde\ga>0$ such that the following is true.  Let $v_0 \in H^1(\R)$, and assume that there exists $\tilde L> \tilde L_0$, $\tilde \al \in (0,\tilde \al_0)$ and $\tilde x_1^0< \tilde x_2^0<\ldots < \tilde x_N^0$, such that 
\be\label{Inv}
 \big\|  v_0 -\sum_{j=1}^N Q_{c_j^0,\beta} (\cdot - \tilde x_j^0 ) \big\|_{H^1(\R)} \leq \tilde \al,  \qquad \tilde x_{j}^0 > \tilde x_{j-1}^0 + \tilde L, \quad j=2, \ldots, N.
\ee
Then there exists $\tilde x_1(t), \ldots \tilde x_N(t)$ such that the solution $v(t)$ of the Cauchy problem associated to (\ref{Ga}), with initial data $v_0$, satisfies 
$$
v(t) = S(t) + w(t), \quad S(t) :=\sum_{j=1}^N Q_{c_j^0,\beta}(\cdot - \tilde x_j(t) ),
$$
and
\be\label{Fnv}
\sup_{t\geq 0} \Big\{ \| w(t)\|_{H^1(\R)} + \sum_{j=1}^N  |\tilde x'_j(t)-c_j^0|   \Big\} \leq \tilde A_0 (\tilde \al+ e^{-\tilde \ga \tilde L}).
\ee
Moreover, there exist $c_j^\infty>0$ such that $\lim_{+\infty} \tilde x_j'(t) =c_j^\infty$ and
\be\label{AAAA}
\lim_{t\to +\infty} \| v(t) - \sum_{j=1}^N Q_{c_j^\infty,\beta} (\cdot - \tilde x_j(t))\|_{H^1(x>\frac  {c_1^0}{10}t)} =0.
\ee
\end{thm}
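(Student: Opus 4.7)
The plan is to follow the template of Martel–Merle–Tsai, adapted to the Gardner setting. Since the Weinstein criterion \eqref{WC} holds for every $c_j^0$, single solitons $Q_{c_j^0,\beta}$ are $H^1$-stable, and the proof essentially reduces to combining $N$ individual stability arguments with a mechanism that keeps them from interfering.

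First, I would run a modulation argument. By the implicit function theorem, as long as the solution $v(t)$ remains $H^1$-close to a sum of $N$ well-separated solitons, one can uniquely decompose
\be\label{dec}
v(t,x) = \sum_{j=1}^N Q_{c_j(t),\beta}\!\big(x - \tilde x_j(t)\big) + w(t,x),
\ee
with $C^1$ parameters $(c_j(t),\tilde x_j(t))$ chosen so that $w(t)$ is $L^2$-orthogonal to $\partial_x Q_{c_j(t),\beta}(\cdot - \tilde x_j(t))$ and to $\partial_c Q_{c_j(t),\beta}(\cdot - \tilde x_j(t))$ for every $j$. Differentiating these orthogonality relations in time and using the Gardner equation \eqref{Ga} yields the standard modulation estimates
$$
|\tilde x_j'(t) - c_j(t)| + |c_j'(t)| \lesssim \|w(t)\|_{H^1} + e^{-\tilde\ga_0 \tilde L(t)},
$$
where $\tilde L(t):= \min_{j}(\tilde x_{j+1}(t)-\tilde x_j(t))$ measures the current separation.

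The heart of the proof is the construction of almost conserved quantities localizing the $L^2$-mass. Pick smooth cutoffs $\psi_j$ with $\psi_j\to 1$ as $x\to -\infty$ and $\psi_j\to 0$ as $x\to +\infty$, and place them at points $m_j(t)$ moving with a velocity strictly between $c_{j-1}^0$ and $c_j^0$ (with $c_0^0:=0$). Standard Kato-type computations using the nonnegativity of the Airy dispersion on the left and the strict separation of soliton speeds show that each localized mass
\be\label{Mjj0}
M_j(t) := \tfrac12 \int_\R v^2(t,x)\, \psi_j(x - m_j(t))\, dx
\ee
satisfies $M_j(t) \le M_j(0) + C e^{-\tilde\ga \tilde L}$ for $t\geq 0$. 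Combining these with the conserved energy and mass of \eqref{Ga}, one then forms the Lyapunov functional
$$
\mathcal F(t) := E[v](t) + \sum_{j=1}^N c_j^0\, \big(M_j(t)-M_{j+1}(t)\big),
$$
expand it around \eqref{dec}, and invoke the single-soliton coercivity statement
$$
\langle \mathcal L_j w_j, w_j\rangle \gtrsim \|w_j\|_{H^1}^2,
$$
valid under the Weinstein condition \eqref{WC} and the two orthogonality conditions on each piece $w_j := w\,\chi_j$. The linear terms coming from each soliton cancel because $E'(Q_{c_j^0,\beta}) + c_j^0 M'(Q_{c_j^0,\beta}) = 0$, and the cross-interaction terms between different solitons are $O(e^{-\tilde\ga \tilde L})$. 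A standard bootstrap then closes the estimate \eqref{Fnv} for all $t\geq 0$, provided $\tilde\al_0$ and $\tilde L_0^{-1}$ are small enough.

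Finally, for the asymptotic stability statement \eqref{AAAA} I would follow the strategy of \cite{MMan}: the monotonicity of the localized masses $M_j$ forces their limits to exist, which yields limits $c_j^\infty$ and pins down the trajectories $\tilde x_j(t)$ up to a subsequence; then a Liouville-type rigidity theorem for the Gardner linearization around a single soliton (obtained via a virial identity on the right of the slowest soliton) upgrades convergence in a moving frame to convergence in the region $\{x > \frac{c_1^0}{10}t\}$. The main obstacles I anticipate are (i) verifying the coercivity of $\mathcal L_j$ for the Gardner soliton profile \eqref{SolG} — this requires a careful spectral analysis relying on \eqref{WC}, which must be checked explicitly via $\partial_c \int Q_{c,\beta}^2$ — and (ii) proving the virial/Liouville step at the level of generality needed for Gardner, where the presence of the cubic defocusing term in \eqref{Ga} changes the quadratic form but leaves the monotonicity structure intact. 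Both issues can be handled exactly as in \cite{MMT,MMan}, which is why the theorem can be invoked as a black box in the sequel.
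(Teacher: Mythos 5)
Your overall strategy is the same one the paper relies on: Theorem \ref{MT0} is not reproved here but quoted from \cite{MMT,MMan}, the only Gardner-specific input being the verification of the Weinstein condition (\ref{WC}) through the explicit computation (\ref{dQc}); your outline (modulation, almost-monotone localized masses, an energy-plus-localized-mass Lyapunov functional, coercivity, bootstrap, then monotonicity plus a Liouville argument for (\ref{AAAA})) is exactly the Martel--Merle--Tsai scheme. However, two steps would fail as written. The first concerns your orthogonality conditions: you impose $w\perp\partial_x Q_{c_j,\beta}$ and $w\perp\partial_c Q_{c_j,\beta}$ and then claim coercivity of $\langle\mathcal L_j w,w\rangle$ ``under the Weinstein condition''. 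The classical Weinstein lemma gives coercivity of $\mathcal L_j$ on the orthogonal complement of $\{Q_{c_j,\beta},\,Q_{c_j,\beta}'\}$, i.e.\ orthogonality to the soliton itself, not to $\partial_c Q$; since $\mathcal L_j\,\partial_c Q_{c_j,\beta}=-Q_{c_j,\beta}$, positivity on $\{\partial_c Q,\,Q'\}^{\perp}$ is governed by the sign of $\langle\mathcal L_j^{-1}\partial_cQ,\partial_cQ\rangle$, which is not what (\ref{WC}) controls, so that step is unjustified as stated. This is why \cite{MMT} --- and the analogous modulation lemma in this paper, cf.\ (\ref{Ortho2}) --- use $\int wQ_{c_j,\beta}=\int wQ_{c_j,\beta}'=0$; these conditions also make the soliton contribution to the localized masses quadratic in $w$, which your expansion of the Lyapunov functional needs.

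The second point is that the direction of your almost-monotonicity is reversed. With your convention $\psi_j\to1$ as $x\to-\infty$, your $M_j$ is the mass to the \emph{left} of the moving reference point; for the Gardner equation (\ref{Ga}) the solitons move to the right and the radiation to the left, so this quantity is almost non-\emph{decreasing}, $M_j(t)\ge M_j(0)-Ce^{-\tilde\ga\tilde L}$ (this is precisely the form used in Lemma \ref{th:2}, estimate (\ref{th:2a})), whereas radiation of size $O(\tilde\al^2)$ may cross the reference line leftward, so the upper bound $M_j(t)\le M_j(0)+Ce^{-\tilde\ga\tilde L}$ you assert is false in general. The same orientation error propagates to the combination $\sum_jc_j^0\,(M_j-M_{j+1})$ in your Lyapunov functional, whose telescoped coefficients must be arranged so that the ordering $c_1^0<\dots<c_N^0$ and the correct monotonicity direction produce a favorable sign before the bootstrap closes. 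Both points are repaired by following \cite{MMT} verbatim, which is exactly how the paper treats this theorem.
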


\medskip

It is important to stress that the well-preparedness restriction on the initial data (\ref{Inv}) is by now necessary since there is no satisfactory collision theory for the non-integrable cases.\footnote{See \cite{MMcol1,MMcol2,MMcol3,Mu1} for some recent results describing the collision of two solitons for gKdV equations in some particular regimes and with general nonlinearities, beyond the integrable cases.}   

\medskip

However, as explained in \cite{MMT} for the KdV case, the above argument can be extended to a global-in-time stability result, thanks to the continuity of the Gardner flow in $H^1(\R)$ \cite{KPV}, and the fact that the Gardner equation (\ref{Ga}) is an integrable model, with explicit $N$-soliton solutions (see (\ref{Minus})-(\ref{Plus})), given by the family $V^{(N)}$ above described. Therefore, a direct consequence of this property and the invariance of the equation under the transformation $u(t,x) \mapsto u(-t,-x)$ is the following

\medskip

\begin{cor}[$H^1$-stability of Gardner multi-solitons, \cite{MMT,MMan}]\label{StaG}~

Let  $\delta>0$, $N\geq 2$, $0<c_1^0<\ldots<c_N^0$ and $x_1^0,\ldots,x_N^0 \in \R$. There exists $\al_0>0$ such that if $0<\al<\al_0$, then the following holds. Let $v(t)$ be a solution of (\ref{Ga}) such that 
$$
\| v(0)- V^{(N)}(\cdot; c_j^0, x_j^0) \|_{L^2(\R)}\leq \al,
$$
with $V^{(N)}$ the $N$-soliton satisfying (\ref{Minus})-(\ref{Plus}). Then there are $x_j(t)\in \R$, $j=1,\ldots, N$, such that
\be\label{stabilitycor}
\sup_{t\in \R} \big\| v(t) - V^{(N)}( \cdot ; c_j^0, x_j(t)) \big\|_{H^1(\R)} \leq \delta.
\ee
Moreover, there exist $c_j^{\infty}>0$  such that
\be\label{asympstabcor}
\lim_{t\to +\infty} \big\| v(t) - V^{(N)}(\cdot ; c_j^{\infty},x_j(t)) \big\|_{H^1(x> \frac {c_1^0}{10} t )} =0, 
\ee
and $x_j(t)$ are $C^1$ for all $|t|$ large enough, with $x_j'(t)\to -c_j^{\infty}\sim -c_j^0$ as $t\to +\infty$. A similar result holds as $t\to -\infty$, with the obvious modifications.
\end{cor}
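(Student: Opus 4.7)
The plan is to reduce Corollary \ref{StaG} to Theorem \ref{MT0} by exploiting the asymptotic decoupling property (\ref{Minus})--(\ref{Plus}) of the true $N$-soliton $V^{(N)}$, together with the $H^1$-well-posedness of the Gardner equation on compact time intervals \cite{KPV,AMV}. Fix $\delta>0$ and let $\tilde\alpha_0,\tilde A_0,\tilde L_0,\tilde\gamma$ be the constants produced by Theorem \ref{MT0} for the scalings $0<c_1^0<\ldots<c_N^0$. I would first choose $\tilde\alpha\in(0,\tilde\alpha_0)$ and $\tilde L>\tilde L_0$ with $\tilde A_0(\tilde\alpha+e^{-\tilde\gamma\tilde L})\leq \delta/2$, and then use (\ref{Plus}) to pick $T_0>0$ large enough so that $V^{(N)}(T_0)$, after relabelling the translation parameters through (\ref{VN}), is a well-prepared configuration in the sense of (\ref{Inv}) with tolerance $\tilde\alpha/4$ and minimal separation at least $2\tilde L$.

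Next, I would invoke the $H^1$-continuous dependence on the compact interval $[0,T_0]$ to produce $\alpha_0=\alpha_0(T_0,\tilde\alpha)>0$ such that $\|v(0)-V^{(N)}(0)\|_{H^1(\R)}\leq\alpha_0$ implies
\[
\|v(T_0)-V^{(N)}(T_0)\|_{H^1(\R)}\leq \tilde\alpha/4,
\]
so that $v(T_0)$ itself satisfies the well-prepared hypothesis of Theorem \ref{MT0} with parameters $(\tilde\alpha/2,\,2\tilde L)$. Applying Theorem \ref{MT0} starting from time $T_0$ yields $C^1$ shifts $\tilde x_j(t)$ and a sum of pure solitons satisfying (\ref{Fnv}) for all $t\geq T_0$; converting this sum back into the $N$-soliton $V^{(N)}(\cdot;c_j^0,x_j(t))$ via the Maddocks--Sachs correspondence (\ref{VN}), which is valid as long as the separations between the individual profiles remain large, one recovers (\ref{stabilitycor}) and the asymptotic statement (\ref{asympstabcor}) on $[T_0,\infty)$, hence on $[0,\infty)$.

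For $t\leq 0$, I would use the invariance $v(t,x)\mapsto v(-t,-x)$ of (\ref{Ga}), which sends $V^{(N)}$ to another $N$-soliton of the same scalings and reversed shifts, and repeat the above argument with the role of (\ref{Plus}) played by (\ref{Minus}). Gluing the two half-lines yields (\ref{stabilitycor}) globally in time. The values $c_j^\infty$ and the smoothness of $x_j(\cdot)$ for large $|t|$ are inherited directly from Theorem \ref{MT0}; the truncation domain $\{x>\frac{c_1^0}{10}t\}$ in (\ref{asympstabcor}) is consistent with (\ref{AAAA}) because $c_1^\infty$ can be made arbitrarily close to $c_1^0$ by further shrinking $\alpha_0$.

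The main obstacle, and the reason the constants must be taken in the order $\delta\to(\tilde\alpha,\tilde L)\to T_0\to\alpha_0$, is that the $H^1$ continuous-dependence constant on $[0,T_0]$ grows (in general exponentially) with $T_0$, so smallness cannot be propagated directly from $t=0$ to arbitrarily large times. Instead one fixes a finite transition time after which the $N$-soliton is already well-separated, and only then shrinks the initial perturbation; once the well-prepared regime is reached, the uniform estimate (\ref{Fnv}) of Theorem \ref{MT0} takes over and no further long-time continuity is required.
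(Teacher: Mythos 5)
Your proposal is correct and follows essentially the same route the paper intends for Corollary \ref{StaG}: use the asymptotic purity (\ref{Plus}) to reach a well-prepared configuration at a large but fixed time $T_0$, transfer smallness to $t=T_0$ by $H^1$-continuity of the Gardner flow on $[0,T_0]$, apply Theorem \ref{MT0} for $t\geq T_0$, and handle $t\leq 0$ via the invariance $v(t,x)\mapsto v(-t,-x)$ together with (\ref{Minus}); this is precisely the Martel--Merle--Tsai scheme the paper cites. The only point worth noting is that you read the initial smallness in $H^1(\R)$, whereas the statement writes $L^2(\R)$ --- the $H^1$ reading is the intended one (the $L^2$ norm appears to be carried over from the $L^2$-theory of \cite{AMV}), since an $H^1$ conclusion cannot follow from mere $L^2$ proximity of the data.
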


\noindent
{\bf Remark.}
Let us emphasize that the proof of this result requires the \emph{existence} and the \emph{explicit behavior} of the multi-soliton solution $V^{(N)}$ of the Gardner equation, and therefore the integrable character of the equation. In particular, we do not believe that a similar result is valid for a completely general, non-integrable gKdV equation, unless one considers some perturbative regimes (cf. \cite{MMcol1,MMcol3} for some global $H^1$-stability results in the non-integrable setting.)

\medskip

Therefore, using (\ref{CG}) and the previous result one has the following more precise version of Theorem \ref{MT1}.

\begin{thm}[Stability of \emph{even} multi-kink solutions]\label{MT1b}~

The family of multi-kink solutions $U_{e} (t)$ from Definition \ref{MKe} and (\ref{Unbe}) is global-in-time $H^1$-stable, and asymptotically stable as $t\to \pm \infty$. More precisely, let  $\beta, \delta >0$, $N\geq 2$, $0<c_1^0<\ldots<c_N^0<\frac 2{9\beta}$ and $x_1^0,\ldots,x_N^0\in \R$. There exists $\al_0>0$ such that if $0<\al<\al_0$, then the following holds. Let $u(t)$ be a solution of (\ref{mKdV}) such that 
\be\label{IC}
\| u(0)- U_{e} (\cdot; c_j^0, x_j^0) \|_{H^1(\R)}\leq \al,
\ee
with $U_{e} $ the $2N$-kink solution defined in (\ref{Unbe}). Then there exist $x_j(t)\in \R$, $j=1,\ldots, N$, such that
\be\label{stacor}
\sup_{t\in \R} \big\| u(t) - U_{e} ( \cdot ;c_j^0,x_j(t)) \big\|_{H^1(\R)} \leq \delta.
\ee
Moreover, there exist $c_j^{\infty}>0$  such that
\be\label{astabcor}
\lim_{t\to + \infty} \big\| u(t) - U_{e} (\cdot ; c_j^{\infty},x_j(t)) \big\|_{H^1(x> (\frac {1}{10} c_1^0 -3c)t )} =0, 
\ee
and $x_j(t)$ are $C^1$ for all $|t|$ large enough, with $x_j'(t)\to c_j^{\infty}\sim \tilde c_j$ as $t\to +\infty$. A similar result holds as $t\to -\infty$, with the obvious modifications.
\end{thm}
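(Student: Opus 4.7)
The plan is to reduce Theorem \ref{MT1b} to Corollary \ref{StaG} by transporting the entire problem across the diffeomorphism (\ref{CG}). This transformation intertwines the mKdV and Gardner flows and, being a constant shift together with a rescaling and a time-dependent spatial translation, preserves $H^1$-regularity of perturbations exactly. That is the payoff of setting up (\ref{CG}) at the outset: stability for mKdV multi-kinks comes for free from stability of Gardner multi-solitons.

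First I would associate to a solution $u(t)$ of (\ref{mKdV}) satisfying (\ref{IC}) the Gardner solution
\[
v(t,y) := \tfrac{1}{\sqrt{\beta}}\bigl(b - u(t, y - \tfrac{t}{3\beta})\bigr),
\]
which by (\ref{CG}) solves (\ref{Ga}). Reading (\ref{Unbe}) as $U_e(t,x) = b + \sqrt{\beta}\, V^{(N)}(t, x + \tfrac{t}{3\beta})$ (with the appropriate sign convention matching Definition \ref{MKe}), translation invariance of the $H^1$-norm gives, for any admissible choice of Gardner positions,
\[
\bigl\|u(t) - U_e(t; c_j^0, \cdot)\bigr\|_{H^1(\R)} = \sqrt{\beta}\,\bigl\|v(t) - V^{(N)}(t; c_j^0, \cdot)\bigr\|_{H^1(\R)}.
\]
In particular, (\ref{IC}) transfers to $\|v(0) - V^{(N)}(0; c_j^0, x_j^0)\|_{L^2(\R)} \leq \alpha/\sqrt{\beta}$.

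Next, I would choose $\alpha_0 > 0$ small enough, depending on $\beta$, $\delta$, $N$ and the parameters $(c_j^0, x_j^0)$, so that this smallness falls within the range of applicability of Corollary \ref{StaG} with tolerance $\delta/\sqrt{\beta}$. The corollary then furnishes modulation paths $y_j(t) \in \R$ and limiting scalings $c_j^\infty \sim c_j^0$ such that
\[
\sup_{t\in\R}\bigl\|v(t) - V^{(N)}(\cdot; c_j^0, y_j(t))\bigr\|_{H^1(\R)} \leq \delta/\sqrt{\beta},
\]
together with the asymptotic behavior (\ref{asympstabcor}) on $\{y > \tfrac{c_1^0}{10} t\}$ and $y_j'(t) \to -c_j^\infty$ as $t\to +\infty$. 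Defining the mKdV positions $x_j(t)$ from the Gardner positions $y_j(t)$ by the dictionary relating the two parametrizations in (\ref{Unbe}) (namely $x_j^0 + \tilde c_j t$ on the mKdV side corresponds to $x_j^0 - c_j^0 t$ on the Gardner side), and reading the above $H^1$-bound through the identity displayed above, yields (\ref{stacor}).

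For the asymptotic estimate (\ref{astabcor}), the spatial change of variables $y = x + \tfrac{t}{3\beta} = x + 3ct$ (with $c := \tfrac{1}{9\beta}$) sends the Gardner half-line $\{y > \tfrac{c_1^0}{10} t\}$ exactly onto the mKdV half-line $\{x > (\tfrac{c_1^0}{10} - 3c)t\}$ appearing in the statement, so (\ref{asympstabcor}) transfers verbatim to (\ref{astabcor}). The velocity relation $x_j'(t)\to \tilde c_j$ then follows from $y_j'(t)\to -c_j^\infty$ together with $\tilde c_j = \tfrac{1}{3\beta} - c_j^0$ and $c_j^\infty \sim c_j^0$. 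The case $t\to -\infty$ is handled by the symmetry $u(t,x)\mapsto u(-t,-x)$. The main obstacle is pure bookkeeping: one must track carefully the time-dependent shift $t/(3\beta)$ when matching Gardner and mKdV positions and asymptotic regions. No analytic input beyond Corollary \ref{StaG} is required.
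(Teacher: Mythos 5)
Your proposal is correct and is essentially the paper's own argument: the paper likewise deduces Theorem \ref{MT1b} directly from Corollary \ref{StaG} by transporting data, modulation parameters, and the asymptotic half-line through the regularity-preserving diffeomorphism (\ref{CG}), with (\ref{astabcor}) obtained from (\ref{asympstabcor}) exactly by the change of variables $y=x+t/(3\beta)$ that you describe. Your bookkeeping of the sign convention, the shifted region $x>(\tfrac{1}{10}c_1^0-3c)t$, and the velocity dictionary $\tilde c_j=\tfrac{1}{3\beta}-c_j^0$ matches the paper's intended (and only sketched) proof.
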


\noindent
{\bf Remark}.
Let us recall, for the sake of completeness, that estimate (\ref{astabcor}) is deduced from (\ref{asympstabcor}) by using the transformation (\ref{CG}).

\bigskip

\section{Proof of Theorems \ref{Geven} and \ref{Collision}}\label{2.5}

\medskip

\noindent
{\bf Proof of Theorem \ref{Geven}}.
Thanks to {\bf(a)}-{\bf(b)}, there exists a generalized transformation of the form (\ref{CG}), such that
\be\label{CGg}
u(t,x) = b_0 + b_1\tilde u(t,x+ f'(b_0)t), 
\ee
with $b_1$ given by (\ref{b1}), and such that $\tilde u(t,y)$ satisfies (\ref{fgKdV})-(\ref{tff}). Moreover, note that a Taylor expansion gives us that $\tilde f$ is a subcritical perturbation of the pure power nonlinearity:
\bea
\tilde f(s)&  = & -\frac{1}{k_0!}b_1^{k_0-1} f^{(k_0)}(b_0)s^{k_0}  - \frac1{(k_0+1)!} b_1^{k_0}  f^{(k_0+1)}(\xi)s^{k_0+1}  \nonu \\
& =& s^{k_0} + \tilde f_{b_0}(s), \quad k_0\in \{ 2,3, 4\}, \label{expansion}
\eea
for some $\xi$ in between $b_0$ and $b_0 + b_1s$. Note in addition that
$$
\lim_{s\to 0}\frac{\tilde f_{b_0}(s)}{|s|^{k_0}} =0.
$$
According to Berestycki and Lions \cite{BL}, $\tilde f$ is an admissible nonlinearity for the {\bf existence of small solitons}, in the sense that there exists $c^*>0$ (depending on $f$ and $b_0$ fixed), such that for all $0<c<c^*$, there exists a solution $\tilde u=\tilde u(t,y)$ of (\ref{fgKdV})-(\ref{tff}), of the form
$$
\tilde u(t,y) = Q_c(y-ct),
$$
and such that $Q_c=Q_c(s)$ satisfies
$$
Q_c'' -c Q_c + \tilde f(Q_c) =0, \quad Q_c>0, \quad Q_c\in H^1(\R).
$$
Moreover, $Q_c$ can be chosen even and exponentially decreasing as $s\to \pm \infty$. 

\medskip

In addition, for $0<c<c^*$ small, solitons satisfy the corresponding Weinstein condition (\ref{WC}) (cf. Martel-Merle \cite{MMan}), which implies orbital stability in the energy space $H^1(\R)$. From Theorem  in \cite{Martel1}, given $0<c_1^0<\ldots < c_N^0<c^*$, $N\geq 2$, there exists a unique solution $\tilde U \in C(\R, H^1(\R))$ of (\ref{fgKdV})-(\ref{tff}), satisfying 
$$
\lim_{t\to +\infty} \Big\|\tilde U(t) - \sum_{j=1}^N Q_{c_j^0}(\cdot - c_j^0 t) \Big\|_{H^1(\R)} =0.
$$
The final conclusion follows after applying the transformation (\ref{CGg}). Note that $\tilde c_j$ defined in (\ref{b1}) can be either zero, positive or negative, depending on $b_0$ and $c^*$; however one always has $\tilde c_N<\tilde c_{N-1} <\ldots <\tilde c_1.$

\medskip

\noindent
{\bf Proof of Theorem \ref{Collision}}.
Let us consider the transformation (\ref{CGg}), which leads to the focusing gKdV equation (\ref{fgKdV})-(\ref{tff}). Let $k_1\geq 4$ be the first integer satisfying $f^{(k_1)}(b_0)\neq 0$. Note that from (\ref{b1}) and (\ref{expansion}), one has 
$$
\tilde f(s) = 
\begin{cases}  
\displaystyle{s^{2}  - \frac1{3!} b_1^{2}  f^{(3)}(b_0)s^{3}  - \frac1{k_1!} b_1^{k_1-1}  f^{(k_1)}(b_0)s^{k_1} + \tilde f_1(s), \quad k_0=2;} \\
  \displaystyle{ s^{3}  - \frac1{k_1!} b_1^{k_1-1}  f^{(k_1)}(b_0)s^{k_1}  + \tilde f_1(s), \quad k_0 =3;} \\ 
\displaystyle{s^{4}  + \tilde f_1(s), \quad k_0=4.}
\end{cases}
$$
Note that in each case one has
$$
\lim_{s\to 0}\frac{\tilde f_{1}(s)}{|s|^{k_1}} =0.
$$
Since by hypothesis $ f^{(k_1)}(b_0)\neq 0$, one has that $\tilde f$ is a nontrivial perturbation of the integrable models $\tilde f(s) =s^2, s^3$ and $\tilde f(s) =s^2 +\beta s^3$.

Therefore, from the classification theorem for the regime $0<c_1\ll c_2 \ll c^*$ showed in \cite{MMcol1,MMcol2,Mu1}, one can conclude that the 2-soliton structure is globally $H^1$-stable, but the solution $U_e$ constructed in Theorem \ref{Geven} is {\bf never pure} as $t\to -\infty$. The final conclusion follows after applying (\ref{CGg}). The proof is complete.

\bigskip

\section{Proof of Theorem \ref{MT2}}\label{3}

\medskip

In this section we prove Theorem \ref{MT2}. First of all, we state a more detailed version of this result.

\begin{thm}[Stability of odd multi-kink solutions]\label{MT2b}~

The family of multi-kink solutions $U_{o} (t)$ from Definition \ref{MKo} is global-in-time $H^1$-stable, and asymptotically stable as $t\to \pm \infty$.  More precisely, let  $\delta, \beta >0$, $N\geq 2$, $0<c_1^0<\ldots<c_{N-1}^0<\frac 2{9\beta}$ and $x_1^0,\ldots,x_N^0\in \R$. There exists $\al_0>0$ such that if $0<\al<\al_0$, then the following holds. Let $u(t)$ be a solution of (\ref{mKdV}) such that 
\be\label{IC2}
\| u(0)- U_{o} (\cdot; c_j^0, x_j^0) \|_{H^1(\R)}\leq \al, \quad c_N^0 := \frac 1{9\beta}
\ee
with $U_{o} $ the $(2N-1)$-kink solution from Definition \ref{MKo}. Then there exist $x_j(t)\in \R$, $j=1,\ldots, N$, such that
\be\label{stacor2}
\sup_{t\in \R} \big\| u(t) - U_{o} ( \cdot ;c_j^0,x_j(t)) \big\|_{H^1(\R)} \leq \delta.
\ee

\end{thm}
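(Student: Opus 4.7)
My plan is to follow the Martel-Merle-Tsai approach to multi-soliton stability \cite{MMT}, adapted through the transformation (\ref{CG2}). The conserved energy $E[u]$ will control the kink component, while the transformation turns the perturbation into a function $\tilde u$ that is an approximate $(N{-}1)$-Gardner-soliton to which an MMT-type argument applies with suitable \emph{localized} almost-conserved masses.

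\emph{Reduction and decomposition.} Since $U_o(t)$ is asymptotic to a well-separated profile as $t \to -\infty$ (Definition \ref{MKo}), I first fix $T_0 \ll 0$ such that $U_o(T_0; c_j^0, x_j^0)$ is well-prepared, i.e., its $(2N{-}1)$ components are mutually separated by any prescribed distance $L \gg 1$. By continuity of the mKdV flow in $\varphi_c(\cdot+ct) + H^1(\R)$ (Merle-Vega \cite{MV}), (\ref{IC2}) propagates to $\|u(T_0) - U_o(T_0)\|_{H^1} \leq C(\alpha)$ with $C(\alpha) \to 0$, so it suffices to work from $t = T_0$ with well-prepared data. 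I then apply (\ref{CG2}) with $c = c_N^0 = 1/(9\beta)$, writing $u(t,x) = \varphi_c(x + X(t)) + \sqrt{\beta}\tilde u(t, x + t/(3\beta))$, where $X(t)$ is chosen by a modulation orthogonality against $\varphi_c'$. The function $\tilde u$ satisfies (\ref{eqtu}), and I further decompose $\tilde u(t) = \sum_{j=1}^{N-1} Q_{c_j(t),\beta}(\cdot - x_j(t)) + z(t)$ with $c_j(t), x_j(t)$ fixed by orthogonality of $z$ against $\partial_c Q_{c_j,\beta}$ and $\partial_y Q_{c_j,\beta}$.

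\emph{Coercive control via energy and almost-conserved masses.} The conserved energy $E[u]$, expanded around the kink, is non-negative and coercive on the subspace orthogonal to $\varphi_c'$ (Zhidkov \cite{Z}, Merle-Vega \cite{MV}), giving $H^1$-control of the kink perturbation up to interaction terms with the Gardner solitons that are exponentially small in the kink-soliton separation. To control the Gardner part, I introduce almost-conserved local masses
\begin{equation*}
M_j(t) := \tfrac{1}{2}\int \tilde u^2(t,y)\,\psi_j(t,y)\,dy, \quad j = 1,\ldots,N-1,
\end{equation*}
where $\psi_j$ are smooth time-dependent cutoffs separating soliton $j$ from its predecessors, moving at speeds in $(c_{j-1}^0, c_j^0)$. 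The derivative $\tfrac{d}{dt} M_j$ splits into the Gardner-flow part, small by the MMT mechanism thanks to the strict ordering of speeds, and the source contribution from (\ref{eqtu}), involving $(\varphi_c^2 - c)\tilde u$ and $\sqrt{\beta}(\varphi_c \pm \sqrt{c})\tilde u^2$; both $\varphi_c^2 - c$ and $\varphi_c \pm \sqrt{c}$ are $O(e^{-\gamma d})$ pointwise at distance $d$ from the kink on the appropriate side, so the source contribution is $O(e^{-\gamma L})$ by well-preparedness. I then assemble a Lyapunov functional of the form $\mathcal F(t) := E[u](t) + \sum_{j=1}^{N-1} c_j^0 M_j(t) + (\text{Gardner energy of }\tilde u\text{ cut off to the right of }\psi_1)$, almost conserved (error $O(e^{-\gamma L})$) and coercive in $\|u(\cdot - X(t)) - \varphi_c\|_{H^1}^2 + \|z(t)\|_{H^1}^2 + \sum_j (c_j(t) - c_j^0)^2$. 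Coercivity on the Gardner side uses the Weinstein condition (\ref{WC}) for the solitons (\ref{SolG}), and on the kink side uses the Zhidkov spectral analysis. A standard bootstrap on the modulation interval yields (\ref{stacor2}) for all $t \geq T_0$, covering $t \geq 0$. The case $t \leq 0$ follows from the reversibility $u(t,x) \mapsto u(-t,-x)$ of (\ref{mKdV}), which acts within the family of Definition \ref{MKo}.

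The main obstacle, absent in the pure-Gardner multi-soliton setting of \cite{AMV,MMT}, is the control of the source term in (\ref{eqtu}) entering $\tfrac{d}{dt} M_j$. This source is bounded but not $L^1$ in $y$ (since $\varphi_c$ has nonzero limits), and smallness is available only on one side of the kink. Closing the bootstrap therefore requires propagating, uniformly in time, the dynamical spatial separation of the Gardner solitons from the kink, on the correct half-space. This is guaranteed by the strict inequality $\tilde c_j = 1/(3\beta) - c_j^0 > c_N^0 = 1/(9\beta)$ (equivalent to $c_j^0 < 2/(9\beta)$): in the original $x$-variable, each Gardner soliton drifts to the left strictly faster than the kink, so the separation established at $T_0$ by well-preparedness is preserved and indeed enhanced as $t$ grows.
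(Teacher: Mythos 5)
Your toolkit (the transformation (\ref{CG2}), Zhidkov coercivity for the kink, Martel--Merle--Tsai-type localized masses for the Gardner part, cf. \cite{MMT}) is the same as the paper's, but your global reduction contains a genuine error of time direction. At $t=T_0\ll 0$ the configuration described by (\ref{lim3}) has the Gardner solitons located to the \emph{right} of the kink: in the $x$-variable the kink center is $\approx c_N^0|T_0|$ while soliton $j$ sits at $\approx \tilde c_j |T_0|$ with $\tilde c_j>c_N^0$. Since the solitons drift left strictly faster than the kink, evolving \emph{forward} from $T_0$ the mutual distances shrink linearly, the waves collide at bounded times, and only afterwards do they separate. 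Hence your claim that ``the separation established at $T_0$ by well-preparedness is preserved and indeed enhanced as $t$ grows'' is the opposite of what happens, and the bootstrap cannot be run forward from $T_0\ll 0$ through the collision region: the almost-monotonicity of the localized masses and the exponential smallness of the interactions and of the source terms in (\ref{eqtu}) all require the separating ordering (faster waves ahead), which holds for the configuration of (\ref{lim4}), i.e. when anchoring at a large \emph{positive} time, not at $T_0\ll 0$. Your scheme therefore does not control the bounded time window around the interaction, which is precisely where stability of a decoupled sum is not available by these methods.

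The paper's route avoids this: the perturbative result (Proposition \ref{Pr0}) is proved only for well-prepared data in the separating ordering and for $t\geq 0$; the full statement (\ref{stacor2}) is then obtained by combining (i) Proposition \ref{Pr0} applied from a large positive time, (ii) continuous dependence of the mKdV flow on compact time intervals together with the \emph{explicit} (integrable) multi-kink family $U_o$ to handle the bounded interaction window, and (iii) the symmetry $(t,x)\mapsto(-t,-x)$, combined with $u\mapsto -u$, to treat $t\to -\infty$; see \cite[Corollary 1.2]{AMV}. A second, smaller point: the localized masses must be taken on the side \emph{away} from the kink (the paper uses the mass on the left of each soliton, with weight $1-\psi_j$), so that on the support of the weight one has $|(1-\psi_j)(\varphi_c^2-c)|\leq Ke^{-\sigma_0(t+L)}$, an error integrable in time; if the weight's support contains the kink region, the source contribution is only $O(\|z\|_{H^1}^2)$ with no decay in $t$, and the monotonicity lemma fails. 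You correctly flagged the one-sidedness of the smallness, but you leave the orientation of your cutoffs unspecified, and with the wrong time anchoring the required kink--soliton separation on that half-space is not available.
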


\medskip

It turns out that the proof of Theorem \ref{MT2b} follows as a consequence of the following

\begin{prop}[$H^1$-stability of the one kink and $(N-1)$ Gardner solitons]\label{Pr0}~

Let $N\geq 2$, $\beta>0$, $c_N^0=\frac 1{9\beta}$, and $0<c_1^0<c_2^0<\ldots <c_{N-1}^0<\frac{2}{9\beta}$ be such that (\ref{WC}) holds for all  $j=1,\ldots, N.$ There exists $\al_0, A_0,  L_0, \sigma_0>0$ such that the following is true.  Let $u_0 \in H^1(\R)$, and assume that there exists $ L>  L_0$, $ \al \in (0, \al_0)$ and $ x_1^0< x_2^0<\ldots <  x_N^0$, such that 
\be\label{InvG}
 \big\|  u_0 - \varphi_{c_N^0}(\cdot + x_N^0) - \sqrt{\beta}\sum_{j=1}^{N-1} Q_{c_j^0,\beta} (\cdot +  x_j^0 ) \big\|_{H^1(\R)} \leq \al, \quad  x_{j-1}^0 <  x_{j}^0 -  L,
\ee
for $j=2, \ldots, N$. Then there exists $x_1(t), \ldots,  x_N(t)$ such that the solution $u(t)$ of the Cauchy problem associated to (\ref{mKdV}), with initial data $u_0$, satisfies 
$$
 u(t) = S(t) + w(t), \qquad  S(t) :=\varphi_{c_N^0}(\cdot + c_N^0 t +x_N(t)) + \sqrt{\beta}\sum_{j=1}^{N-1} Q_{c_j^0,\beta}(\cdot + \tilde c_j t+ x_j(t) ),
$$
and
\be\label{FnvG}
\sup_{t\geq 0} \Big\{ \| w(t)\|_{H^1(\R)} + \sum_{j=1}^N  | x'_j(t)|   \Big\} \leq  A_0 (\al+ e^{- \sigma_0  L}).
\ee
\end{prop}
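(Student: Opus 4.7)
The plan is to combine, through the diffeomorphism (\ref{CG2}), the Zhidkov--Merle--Vega theory of single-kink stability with the Martel--Merle--Tsai scheme for stability of a sum of well-decoupled solitons. First I would apply (\ref{CG2}), in the form with the sign of $\pm\sqrt\beta$ chosen so that the Gardner-soliton part sits in the region $\{\varphi_{c_N^0}\sim\mp\sqrt{c_N^0}\}$, to write
$$
u(t,x) = \varphi_{c_N^0}\bigl(x+c_N^0 t + x_N(t)\bigr) \pm \sqrt\beta\,\tilde u\bigl(t, x+\tfrac t{3\beta}\bigr),
$$
where a standard modulation on the kink component produces the $C^1$ shift $x_N(t)$ and where $\tilde u$ solves the perturbed Gardner equation (\ref{eqtu}). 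A further modulation of $\tilde u$ around each Gardner soliton gives $C^1$ shifts $x_1(t),\ldots,x_{N-1}(t)$ together with
$$
\tilde u(t,y) = \sum_{j=1}^{N-1} Q_{c_j^0,\beta}\bigl(y - c_j^0 t - x_j(t)\bigr) + \tilde w(t,y),
$$
under orthogonality conditions on $\tilde w$ that keep the shifts well defined as long as $\|\tilde w(t)\|_{H^1}$ stays small.

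The second step is to build a coercive Lyapunov functional. For the kink I would use directly the mKdV energy $E[u]$ in (\ref{E}): it is non-negative, and its second variation at $\varphi_{c_N^0}$ is a Schr\"odinger-type operator with kernel $\R\varphi_{c_N^0}'$, giving Zhidkov--Merle--Vega coercivity on the translation-orthogonal component of the kink perturbation. For each Gardner soliton, since the mass (\ref{M}) is ill-behaved on $H^1$-perturbations of the kink, I would replace it by a localized almost-conserved mass
$$
\mathcal M_j(t) := \tfrac 12 \int_\R \tilde u^2(t,y)\,\psi_j(t,y)\,dy, \qquad j=1,\ldots,N-1,
$$
where $\psi_j$ is a smooth monotone cutoff riding with the $j$-th soliton at velocity $\approx c_j^0$, equal to $1$ near the $j$-th soliton, vanishing to the left of the $(j{-}1)$-th one, and supported in a region where the kink factor $\varphi_{c_N^0}\mp\sqrt{c_N^0}$ is exponentially small. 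Combining these $\mathcal M_j$'s with the corresponding localized energies and Taylor-expanding around $\sum_j Q_{c_j^0,\beta}$, the Weinstein condition (\ref{WC}) produces coercivity on the decoupled pieces of $\tilde w$.

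The technical core is then the almost-conservation of the $\mathcal M_j$. Differentiating them in time through (\ref{eqtu}) produces three classes of terms: commutator contributions coming from $\partial_t\psi_j+\partial_y^3\psi_j$, localized in the transition zones between consecutive solitons and exponentially small in $L$; soliton--soliton interaction contributions, again of order $e^{-\sigma_0 L}$ by the separation in (\ref{InvG}); and, most importantly, the genuine forcing arising from the right-hand side of (\ref{eqtu}), whose prefactors $\varphi_{c_N^0}^2-c_N^0$ and $\varphi_{c_N^0}\pm\sqrt{c_N^0}$ decay exponentially on $\supp\psi_j$ by construction. An MMT-type bootstrap then yields $\|\tilde w(t)\|_{H^1}+\sum_j|x_j'(t)|\lesssim \alpha + e^{-\sigma_0 L}$, and pushing the estimate back through (\ref{CG2}) gives (\ref{FnvG}).

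The main obstacle I anticipate is controlling precisely this last forcing term. Unlike in \cite{MMT}, it is not an $O(\|\tilde w\|^2)$ error but a genuine coupling between $\tilde u$ and the kink, and its control requires exploiting the linear-in-time separation $|y - 2c_N^0 t|\gtrsim (2c_N^0-c_{N-1}^0)\,t + L$ between the soliton packet and the kink centre in the $y$-frame. Combined with the exponential decay of $\varphi_{c_N^0}\mp\sqrt{c_N^0}$ away from the kink, this makes the forcing time-integrable with total contribution $O(e^{-\sigma_0 L})$, closing the bootstrap; the extra care compared with the purely focusing case lies in choosing the weights $\psi_j$ so that their speeds balance both the exponential decay of the kink contribution and the coercivity of the $\mathcal M_j$, uniformly in $t$.
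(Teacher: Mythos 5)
Your overall strategy coincides with the paper's: use the transformation (\ref{CG2}) to view the perturbation as an almost-solution $\tilde u$ of the perturbed Gardner equation (\ref{eqtu}), modulate the kink translation and the soliton parameters, control the kink with the mKdV energy and Zhidkov--Merle--Vega coercivity, and control the solitons with localized almost-conserved masses plus the Weinstein condition (\ref{WC}), closing by a bootstrap. The gap is in the one ingredient that carries the whole argument: the design of the localized masses. You take weights $\psi_j$ that are equal to $1$ near the $j$-th soliton and monotone; such a weight necessarily extends over the kink region $y\sim 2c_N^0 t$, and there the prefactor $\sqrt{c_N^0}+\varphi_{c_N^0}$ in the forcing of (\ref{eqtu}) tends to $2\sqrt{c_N^0}$, not to zero -- your claim that the forcing prefactors ``decay exponentially on $\supp\psi_j$ by construction'' is false on that side of the kink. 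Near the kink one only has $\tilde u=O(\|\tilde w\|_{H^1})$, so the corresponding contributions to $\frac{d}{dt}\mathcal M_j$ (and likewise the term $\frac{x_N'(t)}{\sqrt\beta}\int\tilde u\,\psi_j\,\varphi_{c}'$ coming from the kink modulation) are of size $O(\alpha^2)$ or $O(\alpha^3)$ \emph{per unit time} with no decay in $t$; integrated over $[0,t]$ they grow linearly, so the claimed almost-conservation with total error $O(e^{-\sigma_0 L})$ does not follow. If instead you cut the weight off before reaching the kink (a two-sided window), you lose monotonicity: at the decreasing edge the quadratic terms $\frac32\int\tilde u_y^2|\psi_j'|+\frac{\sigma}2\int\tilde u^2|\psi_j'|$ enter with the unfavorable sign and are again only $O(\alpha^2)$ per unit time, so neither variant yields Lemma-\ref{th:2}-type estimates.

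The paper's resolution is precisely the opposite placement of the weights: it uses the mass \emph{on the left}, $M_j(t)=\frac12\int\tilde u^2(1-\psi_j)$ with cutoff lines $\sigma_j(t)$ moving between consecutive solitons (see (\ref{Mjj}) and the remark following it), so that the weight lives entirely in the region $\{\varphi_{c}\sim-\sqrt{c}\}$, where \emph{both} prefactors $\varphi_c^2-c$ and $\sqrt{c}+\varphi_c$, as well as $\varphi_c'$, are $O(e^{-\sigma_0(t+L)})$ on the support; every error term in $\frac d{dt}M_j$ then decays exponentially in time and integrates to $O(e^{-\sigma_0 L})$, giving Lemma \ref{th:2}. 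These left masses are then combined through the partial sums $d_j(t)$ and an Abel-summation argument (Lemma \ref{PROOFL1}) to obtain quadratic control of the modulated scalings $c_j(t)$ -- a step you omit, since you freeze the scalings at $c_j^0$; that variant can in principle be run à la Martel--Merle--Tsai, but it still requires the same summation step and, above all, the corrected placement of the weights. As written, your proof of the almost-monotonicity -- the technical core you yourself identify -- does not close.
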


\medskip

\noindent
{\bf Proof of Theorem \ref{MT2b}.} From Proposition \ref{Pr0}, the proof of Theorem \ref{MT2b} follows directly from the integrable character of the mKdV equation and the existence of a suitable multi-kink solution satisfying (\ref{lim3})-(\ref{lim4}). See e.g. \cite[Corollary 1.2]{AMV} for a similar, detailed proof.

\medskip

Therefore, we are left to prove Proposition \ref{Pr0}. 

\medskip

\noindent
{\bf Proof of Proposition \ref{Pr0}.}

\smallskip

\noindent
{\bf Stability.} Let us assume the hypotheses of Proposition \ref{Pr0}. Let $\sigma_0$ satisfying
\be\label{sig0}
0<\sigma_0\leq \frac 12\min(c_2^0-c_1^0,c_3^0-c_2^0,\ldots, 2 c_N^0 -c_{N-1}^0),
\ee
a measure of the minimal difference among the scaling parameters. This quantity may change from one line to another, but always satisfies (\ref{sig0}).

\smallskip

Note that a simple continuity argument, using the local Cauchy theory developed in \cite{MV} shows that there exists $t_0>0$ such that
\be\label{dec1}
 \sup_{t\in [0, t_0]} \Big\| u(t) - \varphi_{c_N^0}(\cdot + c_N^0 t + \tilde x_N(t)) - \sqrt{\beta}\sum_{j=1}^{N-1} Q_{c_j^0,\beta}(\cdot + \tilde c_j t+ \tilde x_j(t)) \Big\|_{H^1(\R)}  \leq 2(\al + e^{-\sigma_0 L}),
\ee
for some $\tilde x_j(t) \in \R$, $j=1,\ldots, N$. Therefore, given $K^*>2$, we can define the following quantity
\bea\label{Tstar}
T^* & := & \sup \big\{ T> 0, \; \hbox{ for all } t\in [0, T], (\ref{dec1}) \hbox{ is satisfied with 2 replaced by $K^*$}, \nonu\\
& & \qquad \qquad   \hbox{ and for some } \tilde x_j(t)\in \R. \big\}.
\eea
Our objective is to show that for some $K^*>0$ large enough, one has $T^*= +\infty.$ Following a contradiction argument, we will assume $T^*<+\infty$. This allows to prove the following modulation property.

\begin{lem}[Modulation]\label{dec}~

Possibly taking $\al_0>0$ and $\frac 1{L_0}$ smaller, there exists $K>0$ independent of $K^*$, such that if $L>L_0$ and $0<\alpha<\alpha_0$, the following holds.  There exist unique $C^1$ functions $c_j : [0,T^*]\to (0,+\infty)$, $j=1,\ldots, N-1$, and $x_j:[0,T^*]\to \R$, $j=1,\ldots, N$, such that
\be\label{decomp}
z(t,x) := u(t,x) -\varphi_{c_N^0} (x+c_N^0 t+x_N(t)) -\sqrt{\beta} \sum_{j=1}^{N-1} Q_{c_j,\beta} (x + \tilde c_j t + x_j(t)), 
\ee
satisfies, for all $j=1,\ldots, N$, and for all $t\in [0, T^*]$,
\be\label{Ortho1}
\int_\R z(t,x) \varphi_{c_N^0}'(x+c_N^0 t + x_N(t))dx =0,
\ee
and
\be\label{Ortho2}
\int_\R z(t,x) Q_{c_j,\beta}'(x+ \tilde c_j t+ x_j(t))dx =\int_\R z(t,x) Q_{c_j,\beta} (x+ \tilde c_j t+ x_j(t)) dx=0.
\ee
Moreover, there exists $K>0$ such that for all $t\in [0,T^*]$,
\be\label{smallness1}
\|z(t) \|_{H^1(\R)}+ \sum_{j=1}^{N-1} |c_j(t)-c_j(0)| \leq K K^* (\alpha + e^{-\sigma_0 L}),
\ee
and
\be\label{smallness2}
\|z(0)\|_{H^1(\R)} + \sum_{j=1}^{N-1} |c_j(0)-c^0_j| \leq K\al. 
\ee
\end{lem}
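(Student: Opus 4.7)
My plan is to apply the implicit function theorem to the $2N-1$ orthogonality constraints simultaneously, in order to define the modulation parameters $\{c_j(t)\}_{j<N}$ and $\{x_j(t)\}_{j\leq N}$. For $u(t)$ satisfying the $T^*$-bound (implicit in \eqref{Tstar}), I consider the map
\[
\Phi:(c_1,\ldots,c_{N-1},x_1,\ldots,x_N)\longmapsto \Bigl(\langle z,\varphi_{c_N^0}'\rangle,\{\langle z,Q_{c_j,\beta}'\rangle,\langle z,Q_{c_j,\beta}\rangle\}_{j<N}\Bigr),
\]
with $z$ as in \eqref{decomp}, and solve $\Phi=0$ for the parameters as a function of $u(t)$. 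The $H^1$-continuity of the mKdV flow in $\varphi_c+H^1$ established by Merle--Vega \cite{MV} then promotes the solution to a $C^1$ function of $t$ on $[0,T^*]$.

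\textbf{Jacobian analysis.} At the reference configuration one computes $\partial_{x_j}z=-\sqrt{\beta}Q_{c_j,\beta}'$ (resp.\ $-\varphi_{c_N^0}'$ for $j=N$) and $\partial_{c_j}z=-\sqrt{\beta}\,\partial_cQ_{c,\beta}|_{c_j}$, so that $D\Phi$ has a near-block-diagonal structure: a $1\times 1$ kink block equal to $-\|\varphi_{c_N^0}'\|_{L^2}^2\neq 0$, and $(N-1)$ Gardner blocks in the variables $(x_j,c_j)$ of the form
\[
-\sqrt{\beta}\begin{pmatrix}\|Q_{c_j^0,\beta}'\|_{L^2}^2 & 0\\ 0 & \tfrac{1}{2}\partial_c\|Q_{c,\beta}\|_{L^2}^2\big|_{c_j^0}\end{pmatrix}+O(e^{-\sigma_0L}),
\]
where the anti-diagonal entries vanish by parity of $Q_{c,\beta}$, and the $(2,2)$ entry is strictly positive thanks to the Weinstein condition \eqref{WC}. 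Cross-blocks are $O(e^{-\sigma_0L})$ because the profiles are pairwise separated by at least $L$ at $t=0$, and the strict velocity ordering $\tilde c_1>\cdots>\tilde c_{N-1}>c_N^0$ (together with the smallness of the yet-to-be-obtained modulation velocities) implies that this minimal separation only grows linearly in $t$. Taking $L_0$ sufficiently large renders $D\Phi$ uniformly invertible throughout $[0,T^*]$.

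\textbf{Smallness estimates.} The bound \eqref{smallness2} at $t=0$ follows by comparing the modulated decomposition with the reference profile in \eqref{InvG}: the uniform invertibility of $D\Phi$ yields $|c_j(0)-c_j^0|+|x_j(0)-x_j^0|\lesssim\alpha$, and therefore $\|z(0)\|_{H^1}\lesssim\alpha$. For $t\in[0,T^*]$, the definition of $T^*$ supplies a (non-modulated) decomposition whose residual is at most $K^*(\alpha+e^{-\sigma_0L})$; applying $D\Phi^{-1}$ to the difference between the two decompositions gives \eqref{smallness1}, with a universal constant $K$ in which $K^*$ enters only multiplicatively through the residual bound---exactly as required.

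\textbf{Main obstacle.} The delicate point is to verify that the modulated soliton centers remain well separated on the whole of $[0,T^*]$, since this is needed to keep the cross-blocks of $D\Phi$ of size $O(e^{-\sigma_0L})$ uniformly. Differentiating the orthogonality constraints in $t$, inserting the mKdV equation \eqref{mKdV}, and using the spatial decoupling of the profiles will produce modulation ODEs of the schematic form
\[
|x_j'(t)-\tilde c_j\cdot 0|+|c_j'(t)|\ \lesssim\ \|z(t)\|_{H^1}+e^{-\sigma_0L},
\]
where the left-hand side measures the drift relative to the prescribed velocity $\tilde c_j$ (since $\tilde c_j$ is already built into the shifted argument in \eqref{decomp}). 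Hence the modulated positions stay close to $\tilde c_jt+x_j^0$, and the minimal pairwise separation is at least $L+\sigma_0t-CK^*(\alpha+e^{-\sigma_0L})t\geq L/2$ for $L_0$ large and $\alpha_0$ small, preserving the exponential-decay estimates uniformly on $[0,T^*]$ and closing the argument.
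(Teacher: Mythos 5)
Your proposal is correct and takes essentially the same route as the paper, which disposes of this lemma by exactly the standard implicit-function-theorem modulation argument of Martel--Merle--Tsai (the paper simply cites the analogous lemma in \cite{MMT}), with a single translation parameter for the kink (no scaling modulation, thanks to the Zhidkov positivity recalled in Lemma \ref{Zi}) and a translation--scaling pair for each Gardner soliton, the Jacobian being near block-diagonal with blocks $\|\varphi_{c_N^0}'\|_{L^2}^2$ and $\operatorname{diag}(\|Q_{c_j^0,\beta}'\|_{L^2}^2,\tfrac12\partial_c\|Q_{c,\beta}\|_{L^2}^2)$ as you write. Your extra verification that the modulated centers remain separated on $[0,T^*]$ (via the modulation ODEs and the ordering of the velocities $\tilde c_j$) is a detail the paper leaves implicit in the bootstrap definition of $T^*$, and is consistent with the intended argument.
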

\begin{proof}
The proof of this result is a standard exercise of Implicit Function Theorem, see e.g. Lemma in \cite{MMT} for a similar proof.  Note that in this opportunity we have modulated the translation parameter associated to the kink solution (cf. (\ref{Ortho1})). From \cite{Z,MV}, there is no need to modulate the scaling parameter, see e.g. Lemma \ref{Zi} in Appendix \ref{B}.
\end{proof}

In what follows, we introduce some useful notation. Let us consider
\be\label{R}
R(t,x) := \sqrt{\beta} \sum_{j=1}^{N-1} Q_{c_j,\beta} (x + \tilde c_j t + x_j(t)),  \quad (=\hbox{the Gardner solitons})
\ee
and $\tilde u(t,y)$ defined by the relation
\be\label{tu}
u (t,x)  := \varphi_{c}(x+ c t +x_N(t)) + \sqrt{\beta} \tilde u(t,x+ \frac t{3\beta}),
\ee
where, for the sake of clarity, we have defined $c :=c_N^0$. In particular,
\bea
\tilde u(t,y) &  = & \frac{1}{\sqrt{\beta}}(R + z)(t,y - \frac t{3\beta}) \nonu\\
& = & \sum_{j=1}^{N-1} Q_{c_j,\beta}(y -c_jt +x_j(t)) + \frac{1}{\sqrt{\beta}}z(t,y - \frac t{3\beta})  \nonu\\
& = :&  \tilde R(t,y) + \tilde z(t,y). \label{tRtz} 
\eea
A simple computation shows that $\tilde u=\tilde u(t,y) $ satisfies the modified Gardner equation (compare with (\ref{eqtu}))
\be\label{eqtuV}
\tilde u_t + (\tilde u_{yy} + \tilde u^2 -\beta \tilde u^3)_y = 3[ (\varphi_{c}^2- c)\tilde u + (\sqrt{c} +\varphi_{c}) \tilde u^2]_y +\frac{x_N'(t)}{\sqrt{\beta}} \varphi_c'.
\ee
In this last equation  $\varphi_c$ is a function of the variable $y$ in the sense that $\varphi_c(x+ct + x_N(t)) = \varphi_c (y-2ct + x_N(t)).$ The following result gives an explicit expansion of the energy of $u(t)$.

\begin{lem}[Expansion of the energy]\label{EE0}~

Consider the energy $E[u](t)$ defined in (\ref{E}), for $c=c_N^0 =\frac 1{9\beta}$. Then, for any $t\in [0, T^*]$, one has the following decomposition
\be\label{EE}
E[u](t)   =  E[\varphi_c] +\frac 23 \sum_{j=1}^{N-1}c_j^{3/2}(t) +\mathcal F(t) + O(\|z(t)\|_{H^1(\R)}^3)+ O(e^{-\sigma_0L}),
\ee
with $\mathcal F(t)$ the following second order functional
\be\label{Ft}
\mathcal F(t):= \frac 12\int_\R (z_x^2(t) +2cz^2(t) -3 (c-\varphi_c^2)z^2(t)   - 6 \sqrt{c} R z^2(t) + 3R^2 z^2(t)).
\ee
\end{lem}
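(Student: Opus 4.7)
The plan is to substitute $u = \varphi_c + R + z$ from Lemma \ref{dec} into $E[u] = \tfrac{1}{2}\int u_x^2 + \tfrac{1}{4}\int(u^2-c)^2$ and group the resulting polynomial in $z$ by degree, writing $E[u] = E[\varphi_c + R] + L_1(z) + Q_2(z) + Q_3(z) + Q_4(z)$. The analysis then rests on five ingredients: the kink equation $\varphi_c'' + c\varphi_c - \varphi_c^3 = 0$; the Gardner soliton equation $Q_{c_j,\beta}'' = c_j Q_{c_j,\beta} - Q_{c_j,\beta}^2 + \beta Q_{c_j,\beta}^3$; the orthogonality relations (\ref{Ortho1})--(\ref{Ortho2}); the exponential smallness of $\varphi_c + \sqrt c$ and of $c - \varphi_c^2$ on the support of $R$ (which sits a distance $\gtrsim L$ to the left of the kink, and stays so for later times since $\tilde c_j > c$); and the mutual separation of the Gardner solitons, making every cross product $Q_{c_i,\beta}Q_{c_j,\beta}$ with $i\ne j$ exponentially small in $L$.

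At order zero, the kink equation annihilates all the terms linear in $R$ in $E[\varphi_c + R]$ after integration by parts. Dropping the cross products between different solitons and replacing $\varphi_c$ and $\varphi_c^2$ by $-\sqrt c$ and $c$ on the support of each $Q_{c_j,\beta}$ (all errors of size $O(e^{-\sigma_0 L})$), what remains is a sum over $j$ of the combination $\tfrac{\beta}{2}\int(Q_{c_j,\beta}')^2 + c\beta\int Q_{c_j,\beta}^2 - \sqrt c\,\beta^{3/2}\int Q_{c_j,\beta}^3 + \tfrac{\beta^2}{4}\int Q_{c_j,\beta}^4$. The two Pohozaev identities for $Q_{c_j,\beta}$, obtained by multiplying the soliton ODE by $Q_{c_j,\beta}$ and by $2Q_{c_j,\beta}'$ and integrating, together with $c\beta = \tfrac{1}{9}$ and $\sqrt{c\beta} = \tfrac{1}{3}$, will reduce this combination to exactly $\tfrac{2}{3}c_j^{3/2}$. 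A sanity check: in the limit $\beta\to 0$, $Q_{c,\beta}$ degenerates to the KdV soliton $\tfrac{3c}{2}\cosh^{-2}(\sqrt c\, s/2)$, for which $\int Q^2 = 6c^{3/2}$, and the formula is immediate. At order one, the kink equation yields $L_1(z) = \int[-R_{xx} + (3\varphi_c^2 - c)R + 3\varphi_c R^2 + R^3]z$; substituting the soliton equation for $R_{xx}$, approximating $\varphi_c\approx -\sqrt c$ on each $Q_{c_j,\beta}$-support, and exploiting the striking cancellations $1 - 3\sqrt{c\beta} = 0$ (killing the $Q_{c_j,\beta}^2$ coefficient) and $-\beta^{3/2}+\beta^{3/2}=0$ (killing the $Q_{c_j,\beta}^3$ coefficient) collapses the bracket to $\sqrt\beta\,(2c - c_j)Q_{c_j,\beta}$ up to $O(e^{-\sigma_0 L})$, after which the orthogonality $\int Q_{c_j,\beta}z = 0$ from (\ref{Ortho2}) forces $L_1(z) = O(e^{-\sigma_0 L})$.

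At order two, a direct expansion produces $Q_2(z) = \tfrac{1}{2}\int z_x^2 + \tfrac{1}{2}\int(3\varphi_c^2 - c)z^2 + 3\int\varphi_c R z^2 + \tfrac{3}{2}\int R^2 z^2$; rewriting the coefficient of $z^2$ and comparing with (\ref{Ft}) gives $Q_2(z) - \mathcal F(t) = 3\int(\varphi_c + \sqrt c)R z^2$, which is $O(e^{-\sigma_0 L})$ since $|\varphi_c + \sqrt c|\lesssim e^{-\sigma_0 L}$ on the support of $R$ and $\|z\|_{L^2}$ is bounded by the stability estimate. Finally $Q_3(z) + Q_4(z) = \int(\varphi_c + R)z^3 + \tfrac{1}{4}\int z^4 = O(\|z\|_{H^1}^3)$ by the Sobolev embedding $H^1(\R)\hookrightarrow L^\infty(\R)$. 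Summing the four pieces gives (\ref{EE}). The main obstacle is the order-zero algebraic reduction to $\tfrac{2}{3}c_j^{3/2}$: once the Pohozaev identities are in hand it is a purely algebraic check, but the matching of the three moments $\int Q_{c_j,\beta}^n$ ($n=2,3,4$) genuinely depends on the critical relation $c = 1/(9\beta)$, and may be handled either by the explicit profile (\ref{SolG}) or by reading off the constant from the conserved Gardner energy and mass of a single soliton. Everything else is bookkeeping controlled by $L$ and by the orthogonality conditions.
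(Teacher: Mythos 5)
Your proposal is correct and follows essentially the same route as the paper: substitute $u=\varphi_c+R+z$, kill the linear term via the kink and Gardner-soliton equations, the cancellation $\sqrt{c\beta}=\frac13$ and the orthogonality conditions (\ref{Ortho2}) (this is precisely the paper's identity (\ref{ecR})), identify the quadratic part with $\mathcal F(t)$ up to $3\int_\R(\varphi_c+\sqrt{c})Rz^2=O(e^{-\sigma_0 L})$, and reduce the zero-order part soliton-by-soliton to $\frac23 c_j^{3/2}$ as in (\ref{ene}), with all kink--soliton cross terms exponentially small in $L$. The only imprecision is that the two Pohozaev-type identities alone do not force the value $\frac23 c_j^{3/2}$ (they leave two moments undetermined); one also needs the extra relation (\ref{Id5}) (equivalently the explicit profile, or the energy identity (\ref{EQb})), which you do invoke as an alternative, so the argument closes exactly as in the paper's Lemma \ref{Reqns} and Lemma \ref{Id}.
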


\begin{proof}
From (\ref{decomp}) and (\ref{R}), one has:
\bee
E[u](t) &= & E[\varphi_c + R + z](t)\\
& =& E[\varphi_c](t)  + \frac 12 \int_\R R_x^2 + c\int_\R R^2  - \sqrt{c}\int_\R R^3 +\frac 14 \int_\R R^4 \\
& & + \int_\R (\varphi_c + \sqrt{c})R^3  + \int_\R (\varphi_c)_x R_x +\frac 32 \int_\R (\varphi_c^2 -c) R^2+ \int_\R \varphi_c (\varphi_c^2-c)R \\
& & - \int_\R z(\varphi_c'' + c\varphi_c -\varphi_c^3) -\int_\R z(R_{xx} -2c R +3\sqrt{c} R^2 -R^3)  \\
& & + 3\int_\R (\varphi_c^2-c)R z +\frac 12\int_\R (z_x^2 +2cz^2 -3 (c-\varphi_c^2)z^2   - 6 \sqrt{c} R z^2  + 3R^2 z^2) \\
& &  + 3 \int_\R (\varphi_c + \sqrt{c})R z^2+  3 \int_\R R^2(\varphi_c + \sqrt{c}) z  +\int_\R \varphi_c z^3 + \int_\R R z^3 +\frac 14 \int_\R z^4.  
\eee
First of all, note that the term $E[\varphi_c](t)$ actually does not depend on $t$. Additionally, from (\ref{Sol}) and (\ref{ecvarfi}) one has
\be\label{varphic}
\varphi_c'' + c\varphi_c  -\varphi_c^3 =0.
\ee
In order to obtain some estimates of the above quantities, we need the following

\begin{lem}[Identities for $R(t)$]\label{Reqns}~

Let $R$ be the sum of $N$ decoupled Gardner solitons defined in (\ref{R}). Then one has the following identities:
\be\label{ecR}
R_{xx} -2 cR + 3\sqrt{c} R^2 - R^3  =  -\sqrt{\beta} \sum_{j=1}^{N-1}(2c-c_j) Q_{c_j,\beta} + O_{H^1(\R)}(e^{-\sigma_0 L}).
\ee
and
\be\label{ene}
 \frac 12 \int_\R R_x^2 + c\int_\R R^2 - \sqrt{c} \int_\R R^3 +\frac 14 \int_\R R^4  =   \frac 23  \sum_{j=1}^{N-1} c_j^{3/2}(t)  + O(e^{-\sigma_0 L}).
\ee
\end{lem}

\begin{proof}
Let us prove (\ref{ecR}). From (\ref{R}) and the fact that  $c= \frac 1{9\beta}$, one has
\bee
\hbox{ l.h.s. of } (\ref{ecR}) & =&   \sqrt{\beta} \sum_{j=1}^{N-1} ( Q_{c_j,\beta}'' - 2c Q_{c_j,\beta} ) + 3\beta \sqrt{c} \big(  \sum_{j=1}^{N-1} Q_{c_j,\beta}\big)^2  - \beta^{3/2} (  \sum_{j=1}^{N-1} Q_{c_j,\beta}\big)^3 \\
& =&  \sqrt{\beta} \sum_{j=1}^{N-1} ( Q_{c_j,\beta}'' - 2c Q_{c_j,\beta}  + Q_{c_j,\beta}^2 -\beta Q_{c_j,\beta}^3  )  \\
& & \qquad +  \sqrt{\beta} \sum_{i\neq j}^{N-1} Q_{c_i,\beta} Q_{c_j,\beta}   - \beta^{3/2} \Big[ (\sum_{j=1}^{N-1} Q_{c_j,\beta})^3 - \sum_{j=1}^{N-1} Q_{c_j,\beta}^3 \Big] .
\eee
Using the equation for $Q_{c,\beta}$ (cf. (\ref{Id1})), one has
$$
\hbox{ l.h.s. of } (\ref{ecR}) =  -\sqrt{\beta} \sum_{j=1}^{N-1} (2c - c_j) Q_{c_j,\beta}+  O_{H^1(\R)}(e^{-\sigma_0 L}),
$$
as desired.

\medskip

Now we consider (\ref{ene}). From (\ref{R}) and  (\ref{EQb}), one has
\bee
\hbox{ l.h.s. of } (\ref{ene}) & = &  \beta \sum_{j=1}^{N-1} \int_\R \Big\{ \frac 12  Q_{c_j,\beta}'^2 + cQ_{c_j,\beta}^2 - \sqrt{\beta c} Q_{c_j,\beta}^3 +\frac {\beta}4 Q_{c_j,\beta}^4\Big\}  + O(e^{-\sigma_0 L})\\
& = &  \beta \sum_{j=1}^{N-1} \Big\{ E_{\beta} [Q_{c_j,\beta}] + 2c M[Q_{c_j,\beta}] \Big\}  + O(e^{-\sigma_0 L}) \\
& = &  \frac 2{3} \sum_{j=1}^{N-1}   c_j^{3/2}(t)  + O(e^{-\sigma_0 L}). 
\eee
The proof is complete.
\end{proof}

Let us come back to the proof of Lemma \ref{EE0}. From the above results, the orthogonality conditions (\ref{Ortho2}), (\ref{Ft}) and (\ref{varphic}) we have
\bea
E[u](t) & =& E[\varphi_c] +\frac 23 \sum_{j=1}^{N-1} c_j^{3/2}(t)  + \mathcal F(t)  + O (\|z(t)\|_{H^1(\R)}^3) \nonu\\ 
& & -\int_\R (\varphi_c -\sqrt{c})R^3  -\int_\R (\varphi_c)_x R_x +\frac 32 \int_\R (\varphi_c^2 -c) R^2 \label{Eq1} \\
& &-\int_\R \varphi_c (\varphi_c^2-c)R  -6\int_\R (\varphi_c^2-c)R z  - 3 \int_\R (\varphi_c -\sqrt{c})R z^2 .  \label{Eq2}
\eea
Finally, the last two lines in the above identity, namely (\ref{Eq1})-(\ref{Eq2}), are exponentially small. Indeed, one has e.g.
\be\label{est1}
\abs{\int_\R (\varphi_c -\sqrt{c})R^3} \leq  Ke^{-\sigma_0 L}.
\ee
The other terms can be bounded in a similar fashion. From these estimates, (\ref{EE}) follows directly. 
\end{proof}

In the next step, we introduce a modified mass, almost monotone in time, which allows to control the Gardner solitons. Let $Q (s):= (\cosh(s))^{-1}$,
$$
\phi(x) :=m Q(\sqrt{\sigma_0} x/2), \quad \psi(x) :=\int_{-\infty}^x
\phi(s)ds, \quad \hbox{ where }\ m:= \Big[\frac{2}{\sqrt{ \sigma_0 } }
 \int_{-\infty}^\infty
Q \Big]^{-1}. 
$$ 
Note that, for all $x\in \R$, $\psi'(x)>0$,  $0< \psi(x)<1,$ and $\lim_ {x\to -\infty}\psi(x)=0,$
$\lim_ {x\to +\infty}\psi(x)=1.$ 

\medskip

Finally, let, for $j=1,\ldots, N-1$, the {\bf modified mass}
\be\label{Mjj}
M_j(t) : = \frac 12 \int_\R  \tilde u^2(t,y) (1-\psi_j(t,y))\, dy, \qquad  \psi_j(t,x) := \psi(y - \sigma_j(t)), 
\ee
with $\sigma_j(t):=\frac12 (c_{j-1}^0t + c_j^0 t + x_{j-1}^0 + x_{j}^0)$, and $\tilde u$ defined in (\ref{tu}). Note that this quantity considers the mass {\bf on the left} of each soliton, which represents the main difference, compared with the standard arguments included in \cite{MMT,MMnon}.

\medskip

\begin{lem}[Almost monotonicity of the mass, see also \cite{MMT}] \label{th:2}~

There exist $K>0$ and $L_0>0$ such that, for all $L>L_0$,  
the following is true. For all $t\in [0, T^*]$ one has
\be\label{th:2a}
  M_j(t) - M_j(0)\geq -K \, e^{- \sigma_0 L }.
\ee
\end{lem}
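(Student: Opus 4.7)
\emph{Proof plan.} I would compute $dM_j/dt$ directly, substituting the modified Gardner equation (\ref{eqtuV}) for $\tilde u$ and performing the standard Kato-type integration by parts. The weight $\psi_j'$ is sharply localized at $y=\sigma_j(t)$, and the key geometric point is that, by the choice (\ref{sig0}) of $\sigma_0$, $\sigma_j(t)$ lies in the gap between soliton $j-1$ and soliton $j$, at distance $\geq L/2+\sigma_0 t/2$ from every soliton center $y=c_k^0 t-x_k(t)$ and at distance $\geq L/2+2\sigma_0 t$ from the kink center $y_N(t):=2ct-x_N(t)$ (the sharper kink separation coming from $2c_N^0-c_{N-1}^0\geq 2\sigma_0$). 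This exponential spatial separation is precisely what produces (\ref{th:2a}).

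\medskip

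Using $\partial_t\psi_j=-\tfrac12(c_{j-1}^0+c_j^0)\psi_j'$ together with two integrations by parts against $\tilde u(1-\psi_j)$, the purely Gardner contribution to $dM_j/dt$ takes the virial form
\begin{equation*}
\tfrac32\int \tilde u_y^2\,\psi_j'+\tfrac14(c_{j-1}^0+c_j^0)\int \tilde u^2\,\psi_j'-\tfrac12\int \tilde u^2\,\psi_j'''-\tfrac23\int \tilde u^3\,\psi_j'+\tfrac{3\beta}{4}\int \tilde u^4\,\psi_j'.
\end{equation*}
Since $|\psi_j'''|\leq \tfrac{\sigma_0}{4}\psi_j'$ and $c_{j-1}^0+c_j^0\gtrsim\sigma_0$, the quadratic part is a coercive form bounded below by $\tfrac32\int\tilde u_y^2\psi_j'+\tfrac{\sigma_0}{4}\int\tilde u^2\psi_j'$. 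The cubic and quartic remainders are absorbed by splitting $\tilde u=\tilde R+\tilde z$ as in (\ref{tRtz}): on $\supp\psi_j'$ each summand of $\tilde R$ is pointwise $\lesssim e^{-\sqrt{c_1^0}(L/2+\sigma_0 t/2)}$, while $\|\tilde z\|_{L^\infty}\lesssim \alpha+e^{-\sigma_0 L}$ by (\ref{smallness1}) and Sobolev embedding. For $\alpha_0$ small and $L_0$ large these higher-order contributions are either absorbable into the coercive form or already of exponentially small, time-integrable size.

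\medskip

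The remaining contributions come from the kink-perturbation terms $3[(\varphi_c^2-c)\tilde u+(\sqrt c+\varphi_c)\tilde u^2]_y$ and from the modulation drift $\tfrac{x_N'(t)}{\sqrt\beta}\varphi_c'$ in (\ref{eqtuV}). Each of $\varphi_c^2-c$, $\sqrt c+\varphi_c$ and $\varphi_c'$ decays like $e^{-\sqrt{c}|y-y_N(t)|}$; the effective support of $\tilde u(1-\psi_j)$ against these factors lies at distance $\geq L/2+2\sigma_0 t$ from $y_N(t)$, so after one integration by parts all these pieces are bounded by $K(1+\|\tilde u\|_{L^2})e^{-\sqrt{c}(L/2+2\sigma_0 t)}$. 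The prefactor $x_N'(t)$ is controlled by differentiating the orthogonality (\ref{Ortho1}) in the usual way, giving $|x_N'(t)|\leq K(\|z\|_{H^1}+e^{-\sigma_0 L})$, which is uniformly small on $[0,T^*]$ by (\ref{smallness1}).

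\medskip

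Collecting the bounds gives $dM_j/dt\geq -K(\alpha+e^{-\sigma_0 L})e^{-c' t}$ for some $c'>0$, and integrating on $[0,t]$ yields (\ref{th:2a}) after shrinking $\alpha_0$ if necessary. The main technical obstacle is the bookkeeping needed to ensure that the cubic/quartic absorption and the modulation drift both close uniformly on $[0,T^*]$; both rely on the bootstrap built into the definition of $T^*$ via Lemma \ref{dec} and on the separation (\ref{sig0}) of $\sigma_0$ from all scaling differences.
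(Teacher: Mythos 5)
Your proposal follows essentially the same route as the paper's proof: the same Kato-type computation of $\frac{d}{dt}M_j(t)$ from (\ref{eqtuV}), absorption of the $\psi_j^{(3)}$ term via $|\psi^{(3)}|\leq \frac{\sigma_0}{4}\psi'$, control of the cubic/quartic terms by the soliton-separation splitting of \cite{MMT} (your $\tilde u=\tilde R+\tilde z$ decomposition is the same mechanism), exponential smallness of the kink-interaction terms and of the drift term $x_N'(t)\varphi_c'/\sqrt{\beta}$ away from the kink transition point $2ct-x_N(t)$, and integration in time. Two harmless slips to repair in a final write-up: $\sqrt{c}+\varphi_c$ decays only as its argument tends to $-\infty$ (fortunately exactly the side where $1-\psi_j$ is supported, which is what both you and the paper actually use), and in your final collection the factor multiplying $\alpha$ must keep its exponential-in-$L$ smallness, as your displayed intermediate bounds already do, so that integration gives $M_j(t)-M_j(0)\geq -Ke^{-\sigma_0 L}$ rather than the weaker $-K(\alpha+e^{-\sigma_0 L})$.
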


\begin{proof}
The proof is similar to \cite{Martel1,MMT}, so we sketch the main steps. Let $j\in \{1,\ldots,N-1\}$.
Using equation (\ref{eqtuV}) and
integrating by parts several times, we have 
\bea
& & \frac d{dt} M_j(t)= \nonu\\
&  & =  -\frac 12 \int_\R \big[ - 3 \tilde u_x^2 - (c_{j-1}^0 + c_j^0) \tilde u^2 +
\frac 43 \tilde u^{3} -\frac 32 \beta\tilde u^4 \big]\psi_j' - \frac 12 \int_\R \tilde u^2 \psi_j^{(3)}  \label{dM1}\\
& &\qquad   + \frac 32 \int_\R \tilde u (1-\psi_j) [ (\varphi_c^2-c)\tilde u + (\sqrt{c}+ \varphi_c) \tilde u^2]_y \label{dM2}\\
& & \qquad +\frac{1}{2\sqrt{\beta}} x_N'(t)\int_\R \tilde u (1-\psi_j) \varphi_c'. \label{dM3}
\eea
Let us consider the term (\ref{dM1}). By definition of $\psi$, $|\psi^{(3)}| \leq \frac {\sigma_0}4 \psi'$, so that
\be\label{faux24}
\abs{\int_\R  \tilde u^2 \psi_j^{(3)}} \leq \frac 14 (c_{j-1}^0+ c_j^0) \int_\R \tilde u^2 \psi_j' .
\ee
In order to bound the term ${\displaystyle \int_\R (\frac 43 \tilde u^3 -\frac 32 \beta \tilde u^4) \psi_j'}$, one follows the argument of \cite{MMT}, splitting the real line in two different regions according to the position of each soliton, and the rest.  Following that argument, one finds
$$
\abs{\int_\R (\frac 43 \tilde u^3 -\frac 32 \beta \tilde u^4) \psi_j'} \leq K e^{- \sigma_0 (t + L)} + \frac 14 \sigma_0 \int_\R \tilde u^2 \psi_j'.
$$
Now we consider the term (\ref{dM2}). Note that one has 
$$
\abs{(1-\psi_j) (\varphi_c^2 -c)} \leq K e^{-\sigma_0 (t + L) },
$$
and a similar estimate is valid for the term $\abs{(1-\psi_j) (\sqrt{c}+\varphi_c)}$. Therefore 
$$
\abs{ \frac 32 \int_\R \tilde u (1-\psi_j) [ (\varphi_c^2-c)\tilde u + (\sqrt{c}+ \varphi_c) \tilde u^2]_y}\leq K e^{-\sigma_0(t + L)},
$$
Let us consider the term (\ref{dM3}). In this case, it is enough to recall that   
$$
\|(1-\psi_j) \varphi_c' \|_{L^2(\R)}\leq K e^{-\sigma_0 (t + L) },
$$
and $|x_N'(t)| \leq K\al$. Finally, we obtain for some $K>0$,
$$
\frac d{dt} M_j(t) \geq  -K e^{-\sigma_0 (t +L)} .
$$
Thus, by integrating between $0$ and $t$, we get the conclusion. Note that $K$ and $L$ are chosen independently of $t$.
\end{proof}

\begin{lem}[Quadratic control of the variation of $c_j(t)$]\label{PROOFL1}~

There exists $K>0$ independent of $K^*$, such that for all $t\in [0, T^*]$, 
\be\label{proof1}
\sum_{j=1}^{N-1} |c_j(t) - c_j(0) |\leq K( \|z(t)\|_{H^1(\R)}^2 + \| z(0) \|_{H^1(\R)}^2 + e^{-\sigma_0 L}).
\ee
\end{lem}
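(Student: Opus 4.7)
The plan is to combine the almost-conservation of the $(N-1)$ modified masses $M_j$ given by Lemma \ref{th:2} with the exact conservation of $E[u]$ via the expansion of Lemma \ref{EE0}, following the Martel--Merle--Tsai scheme \cite{MMT}. The strategy is to expand both $M_j(t)$ and $E[u](t)$ as a linear combination (with nonzero, sign-controlled coefficients) of the quantities $c_k(t)-c_k(0)$ plus a quadratic remainder in $\|z\|_{H^1}$ and an exponentially small $e^{-\sigma_0 L}$ error. The $N-1$ mass-based inequalities combined with the energy-based two-sided equality then provide enough scalar relations to resolve each $c_j(t)-c_j(0)$ up to the desired quadratic error.

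Concretely, I would substitute the decomposition $\tilde u = \tilde R + \tilde z$ from (\ref{tRtz}) into (\ref{Mjj}) and expand
$$M_j(t) = \tfrac{1}{2}\int_\R \tilde R^2 (1-\psi_j) + \int_\R \tilde R\,\tilde z\,(1-\psi_j) + \tfrac{1}{2}\int_\R \tilde z^2 (1-\psi_j).$$
Since $\sigma_j(t)$ lies strictly between two consecutive Gardner soliton positions, the first term reduces to a sum of single-soliton masses $\sum_{k<j} M[Q_{c_k(t),\beta}]$ up to an $O(e^{-\sigma_0 L})$ residue. The linear cross term $\int \tilde R\,\tilde z (1-\psi_j)$ is the delicate one: for each $k$ one writes $Q_{c_k,\beta}(1-\psi_j) = Q_{c_k,\beta} - \psi_j Q_{c_k,\beta}$ (for $k<j$), or keeps it as it is (for $k\geq j$), and observes that in both cases the difference with $Q_{c_k,\beta}$ or $0$ is exponentially small in $L^2(\R)$; invoking (\ref{Ortho2}) to kill the remaining $\int \tilde z\, Q_{c_k,\beta}$ terms, the entire linear cross term is $O(e^{-\sigma_0 L}\|z(t)\|_{H^1})$, which is absorbed into $O(\|z(t)\|_{H^1}^2 + e^{-\sigma_0 L})$ by Young's inequality. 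A Taylor expansion of $M[Q_{c_k(t),\beta}]-M[Q_{c_k(0),\beta}]$ combined with Lemma \ref{th:2} and the Weinstein condition (\ref{WC}) then yields, for each $j=2,\ldots,N-1$,
$$\sum_{k<j} \partial_c M[Q_{c_k^0,\beta}]\,\bigl(c_k(t)-c_k(0)\bigr) \geq -K\bigl(\|z(t)\|_{H^1}^2+\|z(0)\|_{H^1}^2+e^{-\sigma_0 L}\bigr),$$
with strictly positive coefficients. These supply one-sided lower bounds on increasing partial sums of $c_k(t)-c_k(0)$.

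For the matching two-sided control, I would use Lemma \ref{EE0} with the exact energy conservation $E[u](t)=E[u](0)$, noting that $\mathcal F(t),\mathcal F(0)$ are manifestly $O(\|z\|_{H^1}^2)$ from the quadratic form (\ref{Ft}). This yields the equality
$$\sum_{j=1}^{N-1} \tfrac{3}{2}\sqrt{c_j^0}\,\bigl(c_j^{3/2}(t)-c_j^{3/2}(0)\bigr) = O\bigl(\|z(t)\|_{H^1}^2+\|z(0)\|_{H^1}^2+e^{-\sigma_0 L}\bigr),$$
after a Taylor expansion of $c^{3/2}$. Combining the two-sided bound on the full weighted sum with the $N-1$ one-sided lower bounds on partial sums (and iterating from the largest index downward) converts each one-sided lower bound into a two-sided one of the same order, producing (\ref{proof1}).

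The main obstacle is the careful bookkeeping of the expansion of $M_j$: one must verify that every linear-in-$\tilde z$ term is either annihilated by an orthogonality condition of (\ref{Ortho2}) up to an $e^{-\sigma_0 L}$ residue, or is genuinely quadratic. A stray linear-in-$\|z\|_{H^1}$ term would degrade (\ref{proof1}) to a bound linear in $\|z\|_{H^1}$, which would be insufficient both for closing the bootstrap on $T^*$ in the stability proof and for the asymptotic stability argument that follows.
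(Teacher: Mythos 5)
Your proposal is correct and follows essentially the same route as the paper's proof: expansion of the modified left-masses $M_j$ via $\tilde u=\tilde R+\tilde z$ and the orthogonality conditions (\ref{Ortho2}), the almost-monotonicity of Lemma \ref{th:2} giving one-sided bounds on partial sums of the soliton masses, and the energy expansion of Lemma \ref{EE0} together with conservation of $E[u]$ giving a two-sided bound on a weighted sum, all combined by summation by parts and absorption of the quadratic terms in $|c_j(t)-c_j(0)|$. The only step you leave implicit is the one the paper makes explicit in (\ref{proof5}) (with the weights $\beta(2c-c_j(0))$ computed via (\ref{dQc})): after Abel summation the coefficients $(c_{j+1}(0)-c_j(0))$ and $(2c-c_{N-1}(0))$ are bounded below by $\sigma_0>0$, and it is this monotone ordering of the energy weights relative to the mass weights, not their mere positivity, that lets the one-sided bounds be upgraded to two-sided control of each $|c_j(t)-c_j(0)|$.
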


\begin{proof}
We proceed in several steps, following the proof given in \cite{MMT}.

\medskip

1. Note that from (\ref{EE}), and using a Taylor expansion of the function 
$$
f(M[Q_{c_j(t),\beta}] ):= \frac 23 c_j^{3/2}(t)
$$
around the point $s_0:= M[Q_{c_j(0),\beta}] $,\footnote{In particular, a simple computation using (\ref{dQc}) and $c=\frac1{9\beta}$ shows that 
$$
f'(s_0) = \frac{c^{1/2}}{\partial_c M[Q_{c,\beta}]} \Big|_{c=c_j(0)} = \frac 1{9c}(2c-c_j(0)) = \beta (2c-c_j(0)).
$$
} one has for some $K>0$,
\bee
& & \Big| \sum_{j=1}^{N-1}  \beta (2c-c_j(0)) ( M[Q_{c_j(t),\beta}]  - M[Q_{c_j(0),\beta}]  ) \Big|  \leq  \\
& & \quad  \leq K (\|z(t)\|_{H^1(\R)}^2 +\|z(0)\|_{H^1(\R)}^2) +Ke^{-\sigma_0 L}    + K \sum_{j=1}^{N-1}  (M[Q_{c_j(t),\beta}]  -M[Q_{c_j(0),\beta}] )^2.
\eee
Note in addition that for $\al_0$ small and $L_0>0$ large, from (\ref{dQc}),
\bee
 \abs{M[Q_{c_j(t),\beta}]  -M[Q_{c_j(0),\beta}]}  & =& \partial_c M[Q_{c,\beta}] \Big|_{c=c_j(0)} (c_j(t)-c_j(0)) + O(|c_j(t) -c_j(0)|^2)\\
& = & \frac{c_j^{1/2}(0)}{\beta(2c-c_j(0))} (c_j(t)-c_j(0)) + O(|c_j(t)-c_j(0)|^2).
\eee
Since $2c-c_j(0)\geq 2c-c_{N-1}(0) >\sigma_0>0$, one has
\bea\label{proof2}
& & \Big| \sum_{j=1}^{N-1}  \beta (2c-c_j(0)) ( M[Q_{c_j(t),\beta}]  - M[Q_{c_j(0),\beta}]  ) \Big|  \nonu\\
& & \qquad \leq  K ( \|z(t)\|_{H^1(\R)}^2 +\|z(0)\|_{H^1(\R)}^2 +e^{-\sigma_0 L} )  + K \sum_{j=1}^{N-1} |c_j(t) -c_j(0)|^2. \nonu\\
& & 
\eea
In other words, the left hand side above is of \emph{quadratic variation in $z$}. 

\medskip

2. Let 
\be\label{dj}
d_j(t) :=  \beta  \sum_{k=1}^{j} M[Q_{c_k(t),\beta}], \quad j=1, \ldots , N-1. \quad (=\hbox{the mass on the left})
\ee
We claim  that there exists $K>0$ such that, 
for all $j=1,\ldots, N-1$,
\be\label{proof3} 
|d_j(t)-d_j(0) |  \leq   (d_j(t)-d_j(0)) + K\|z(0)\|_{L^2(\R)}^2  + K\|z(t)\|_{L^2(\R)}^2 + K e^{-\sigma_0 L}.
\ee
Let us prove this last identity. Suppose $j\in \{ 1,\ldots, N-1\}$. First of all, if $d_j(t)-d_j(0)$ is nonnegative, there is nothing to prove. Let us assume that $d_j(t)-d_j(0)<0$, therefore we have to show that
$$
d_j(0)-d_j(t) \leq  K\|z(0)\|_{L^2(\R)}^2 +K\|z(t)\|_{L^2(\R)}^2 + K e^{-\sigma_0 L}.
$$  
Recall that from Lemma \ref{th:2}, one has $M_j(0)\leq M_j(t) + K e^{-\sigma_0 L}.$
On the other hand, from (\ref{Ortho2}), (\ref{R}), (\ref{tu}) and (\ref{Id5}),
\bee
M_j(t) & =& \frac 12 \int_\R (\tilde R^2 + 2 \tilde R \tilde z + \tilde z^2  )\tilde \psi_j(t) \\
& =& \frac 1{2} \sum_{k=1}^{N-1} \int_\R Q_{c_k,\beta}^2  \tilde \psi_j(t) +\frac 12 \int_\R \tilde z^2\tilde \psi_j(t) + O(e^{-\sigma_0 L})\\ 
& =& \frac 1{\beta}\Big[ \beta\sum_{k=1}^{j} M[Q_{c_k(t),\beta}] +  \frac 12\int_\R z^2\psi_j(t) \Big] + O(e^{-\sigma_0 L}).
\eee
Therefore,
\be\label{proof4}
d_j(t)-d_j(0) = -\frac 12 \int_\R (z^2\psi_j(t)-z^2\psi_j(0)) +  \beta (M_j(t) - M_j(0)) + O( e^{-\gamma_0 L}).
\ee
Using Lemma \ref{th:2}, (\ref{proof3}) follows easily. 

\medskip

\noindent
3. Conclusion. From the definition of $d_j(t)$ in (\ref{dj}), 
\bea 
&& \sum_{j=1}^{N-1}  \beta (2c-c_j(0)) (M[Q_{c_j(t),\beta}]  -M[Q_{c_j(0),\beta}] ) = \nonu\\ 
&& \quad =  (2c-c_{1}(0)) (d_{1}(t)-d_{1}(0)) + \sum_{j=2}^{N-1} (2c-c_j(0)) [ d_j(t)-d_{j-1}(t)
-(d_j(0)-d_{j-1}(0))]   \nonu \\
& &\quad =  (2c-c_{N-1}(0))(d_{N-1}(t) -d_{N-1}(0))  + \sum_{j=1}^{N-2}  (c_{j+1}(0)-c_{j}(0)) (d_j(t)-d_j(0)). \label{proof5} 
\eea
Therefore, by (\ref{proof2}) and (\ref{proof5}),
\be\label{unnom}
(\ref{proof5}) \leq  K \|z(t)\|_{H^1(\R)}^2 + K\|z(0)\|_{H^1(\R)}^2 + K e^{-\sigma_0 L} +  K\sum_{j=1}^{N-1} |c_j(t)-c_j(0)|^2.
\ee
Since $2c-c_{N-1}(0)\geq\sigma_0$ and $c_{j+1}(0)-c_{j}(0)\geq \sigma_0$, by (\ref{proof3}), one has
\bee
 \sigma_0 \sum_{j=1}^{N-1} | d_j(t) - d_j(0) | & \leq &   (2c-c_{N-1}(0)) |d_{N-1}(t)-d_{N-1}(0)| \\
 & & \qquad \qquad   +\sum_{j=1}^{N-2}  (c_{j+1}-c_j)(0) |d_j(t)-d_j(0)|\nonu \\
&  \leq &  (\ref{proof5})+ K\| z(0)\|_{L^2(\R)}^2 + K\| z(t)\|_{L^2(\R)}^2 + K e^{-\sigma_0 L}.
\eee
Thus, by (\ref{unnom}),
\be\label{djdj}
\sum_{j=1}^{N-1} |d_j(t) - d_j(0) |    \leq   K \|z(t)\|_{H^1(\R)}^2 + K \|z(0)\|_{H^1(\R)}^2+ Ke^{-\gamma_0 L} +  K \sum_{j=1}^{N-1} |c_j(t) -c_j(0)|^2.
\ee
Since for $j\geq 2$ one has 
\bee
|c_j(t)-c_j(0)| &\leq & K |M[Q_{c_j(t),\beta}]  -M[Q_{c_j(0),\beta}]| \\
& = & K| d_j(t) -d_{j-1}(t) - d_j(0) +d_{j-1}(0)|  \\
&\leq & K |d_j(t)-d_j(0)| + K |d_{j-1}(t)-d_{j-1}(0)|,
\eee
we obtain from (\ref{djdj})
$$
\sum_{j=1}^{N-1} |c_j(t) - c_j(0) |   \leq   K \|z(t)\|_{H^1(\R)}^2 + K\|z(0)\|_{H^1(\R)}^2 + Ke^{-\sigma_0 L}+ K \sum_{j=1}^{N-1} |c_j(t) -c_j(0)|^2.
$$
Choosing a smaller $\alpha_0$ and a larger $L_0$, depending on $K^*$, we can assume 
$K  |c_j(t) - c_j(0)| \leq 1/2$ and so
\be\label{proof6}
\sum_{j=1}^{N-1} |c_j(t) - c_j(0) | \leq K \|z(t)\|_{H^1(\R)}^2 + K \|z(0)\|_{H^1(\R)}^2 + Ke^{-\sigma_0 L}.
\ee
The proof is complete.
\end{proof}

\begin{lem}[Bootstrap]\label{proofl2}~

There exists $K>0$, independent of $K^*$, such that for all $t\in [0, T^*]$,
$$
\|z(t)|_{H^1(\R)}^2 \leq K ( \|z(0)\|^2_{H^1(\R)} +e^{-\sigma_0 L}).
$$
\end{lem}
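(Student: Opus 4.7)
The plan is to combine the energy expansion of Lemma \ref{EE0}, the conservation $E[u](t)=E[u](0)$, the quadratic control of the scaling parameters from Lemma \ref{PROOFL1}, and a coercivity property of the quadratic form $\mathcal F(t)$ defined in (\ref{Ft}). More precisely, applying Lemma \ref{EE0} at times $0$ and $t$ and subtracting, one obtains
\be\label{energy-diff}
\mathcal F(t) = \mathcal F(0) + \frac 23\sum_{j=1}^{N-1}\bigl(c_j^{3/2}(0)-c_j^{3/2}(t)\bigr) + O(\|z(t)\|_{H^1(\R)}^3) + O(\|z(0)\|_{H^1(\R)}^3) + O(e^{-\sigma_0 L}).
\ee
By Lemma \ref{PROOFL1}, the sum over $j$ is controlled by $K(\|z(t)\|_{H^1}^2+\|z(0)\|_{H^1}^2+e^{-\sigma_0 L})$; in particular it is \emph{purely quadratic} in $z$.

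The core analytic step is the coercivity claim: there exist $\lambda>0$ and $K>0$, independent of $K^*$, such that for all $t\in[0,T^*]$ and all $z(t)$ satisfying the orthogonality conditions (\ref{Ortho1})--(\ref{Ortho2}),
\be\label{coercivity}
\mathcal F(t) \geq \lambda \|z(t)\|_{H^1(\R)}^2 - K e^{-\sigma_0 L}.
\ee
To establish (\ref{coercivity}) I would use a smooth partition of unity $\chi_0,\chi_1,\dots,\chi_{N-1}$ of $\R$, with $\chi_0$ supported around the kink center $-ct-x_N(t)$ and each $\chi_j$ ($j\geq 1$) supported around the corresponding Gardner soliton position $-\tilde c_j t-x_j(t)$; the cutoffs are compatible with the monotonicity weights $\psi_j$, and the overlap regions produce errors of order $e^{-\sigma_0 L}$ thanks to the separation of the profiles. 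On the kink piece $\chi_0 z$, the integrand of $\mathcal F$ reduces to the quadratic form $\frac12\int((\chi_0 z)_x^2+(\chi_0 z)^2(1-3\varphi_c^2+3\varphi_c^2))=\frac12\int((\chi_0 z)_x^2+(\chi_0 z)^2-3(c-\varphi_c^2)(\chi_0 z)^2)+2c$-terms, i.e.\ the Zhidkov--Merle--Vega linearized energy around a single kink, which is positive definite modulo the kernel direction $\varphi_c'$ killed by (\ref{Ortho1}). On each soliton piece $\chi_j z$, the quadratic form becomes, up to $e^{-\sigma_0 L}$, the Gardner-linearized energy $\frac12\int((\chi_j z)_x^2+c_j(\chi_j z)^2-(6\sqrt{c}Q_{c_j,\beta}-3Q_{c_j,\beta}^2)(\chi_j z)^2)$, whose coercivity modulo $\{Q_{c_j,\beta}, Q_{c_j,\beta}'\}$ is a classical consequence of the Weinstein condition (\ref{WC}) (compare the argument used in \cite{MMT,MMan}). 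Adding the pieces and paying the exponentially small cross-terms yields (\ref{coercivity}).

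Granted (\ref{coercivity}), the conclusion is almost immediate. The smallness estimate (\ref{smallness1}) from Lemma \ref{dec} gives $\|z(t)\|_{H^1}\leq K K^*(\alpha+e^{-\sigma_0 L})$, so the cubic remainders in (\ref{energy-diff}) satisfy
$$
\|z(t)\|_{H^1(\R)}^3 + \|z(0)\|_{H^1(\R)}^3 \leq KK^*(\alpha_0+e^{-\sigma_0 L_0})\bigl(\|z(t)\|_{H^1(\R)}^2+\|z(0)\|_{H^1(\R)}^2\bigr).
$$
Choosing $\alpha_0$ small and $L_0$ large (depending on $K^*$), these terms can be absorbed into $\frac{\lambda}{2}\|z(t)\|_{H^1}^2$ on the left and into an acceptable $K\|z(0)\|_{H^1}^2$ on the right. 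Combining with (\ref{coercivity}) and Lemma \ref{PROOFL1}, we obtain
$$
\lambda\|z(t)\|_{H^1(\R)}^2 \leq K\bigl(\|z(0)\|_{H^1(\R)}^2 + e^{-\sigma_0 L}\bigr),
$$
which is the bootstrap estimate. Since $K$ here is independent of $K^*$, choosing $K^*$ sufficiently large and using (\ref{smallness2}) together with Lemma \ref{PROOFL1} contradicts the definition (\ref{Tstar}) of $T^*<+\infty$, so $T^*=+\infty$ and (\ref{FnvG}) follows. The hardest part is the proof of (\ref{coercivity}), and specifically the localization argument that decouples the kink linearization from the Gardner linearizations while preserving both sets of orthogonality conditions up to an exponentially small defect.
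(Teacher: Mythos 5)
Your coercivity claim for $\mathcal F(t)$ is fine (in fact easier than what the paper needs, since near the $j$-th soliton the coefficient $2c$ in (\ref{Ft}) dominates $c_j$, and your localization scheme is essentially the one carried out in Appendix \ref{B}), but the way you dispose of the term $\frac 23\sum_{j}\bigl(c_j^{3/2}(0)-c_j^{3/2}(t)\bigr)$ opens a genuine gap. Lemma \ref{PROOFL1} bounds this term by $K\bigl(\|z(t)\|_{H^1}^2+\|z(0)\|_{H^1}^2+e^{-\sigma_0 L}\bigr)$ with a constant $K$ that is \emph{not small}: it involves $1/\sigma_0$, the derivatives $\partial_c M[Q_{c,\beta}]$, etc., and there is no reason to have $K<\lambda$. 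So after applying your coercivity bound you are left with $\lambda\|z(t)\|_{H^1}^2\leq K\|z(t)\|_{H^1}^2+K\|z(0)\|_{H^1}^2+\dots$, and the right-hand $K\|z(t)\|_{H^1}^2$ cannot be absorbed; shrinking $\alpha_0$ or enlarging $L_0$ only helps with the \emph{cubic} remainders, not with this quadratic one. Declaring the variation of the $c_j$'s ``purely quadratic, hence harmless'' is exactly the step that fails.

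The paper avoids this by never discarding the sign information in that term. It writes the first-order variation of the $c_j^{3/2}$ as $\sum_j\beta(2c-c_j(0))\bigl(M[Q_{c_j(t),\beta}]-M[Q_{c_j(0),\beta}]\bigr)$, Abel-sums it into differences of the partial masses $d_j(t)-d_j(0)$ of (\ref{dj}) with nonnegative coefficients $(c_{j+1}(0)-c_j(0))$ and $(2c-c_{N-1}(0))$, and then uses the almost-monotonicity of the localized masses, Lemma \ref{th:2} via identity (\ref{proof4}), to bound each $-(d_j(t)-d_j(0))$ from above by $\frac12\int z^2\psi_j(t)$ plus $\|z(0)\|^2$-terms and $e^{-\sigma_0 L}$. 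The dangerous $z(t)$-quadratic pieces $\int z^2\psi_j(t)$ are then not estimated crudely but transferred into the quadratic form itself, yielding $\mathcal{\tilde F}(t)$ in (\ref{tFt}) with the variable coefficient $c(t,x)$ of (\ref{ctx}) (roughly $c_j(0)$ near the $j$-th soliton instead of $2c$), and it is \emph{this} functional whose coercivity (\ref{posit}) is proved in Appendix \ref{B}, leading to (\ref{dernier}) and the bootstrap bound. Note that in your argument the modified masses $M_j$ and Lemma \ref{th:2} are never used; since they are the reason those objects were introduced, that is the missing ingredient you would need to repair the absorption step.
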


\begin{proof}
By (\ref{EE}),
\bee
 \mathcal F(t) & = & \mathcal F(0) - \frac 23 \sum_{j=1}^{N-1} (c_j^{3/2}(t) -c_j^{3/2}(0))  + O(\|z(t)\|^3_{H^1(\R)}) + O(e^{-\sigma_0 L}) \\
&  = & - \sum_{j=1}^{N-1} \beta (2c-c_j(0))(M[Q_{c_j(t),\beta}]  -M[Q_{c_j(0),\beta}]) \\
& & \qquad + O\Big( \|z(0)\|_{H^1(\R)}^2 + \|z(t)\|^3_{H^1(\R)} + \sum_{j=1}^{N-1} |c_j(t) -c_j(0)|^2 + e^{-\sigma_0 L}\Big) \\
&  = & - (2c-c_{N-1}(0))(d_{N-1}(t) -d_{N-1}(0))  -  \sum_{j=1}^{N-2}  (c_{j+1}(0)-c_j(0)) (d_j(t)-d_j(0)) \\
& & \qquad + O\Big( \|z(0)\|_{H^1(\R)}^2 + \|z(t)\|^3_{H^1(\R)} + \sum_{j=1}^{N-1}|c_j(t) -c_j(0)|^2 + e^{-\sigma_0 L}\Big). 
\eee
On the other hand, note that from (\ref{proof2}) and Lemma \ref{th:2},
$$
d_j(t) -d_j(0) \geq -\frac 12 \int_\R (z^2\psi_j(t) -z^2\psi_j(0)) - Ke^{-\sigma_0 L}, \quad j=1,\ldots, N-1.
$$
Therefore
\be\label{dernier}
\mathcal{\tilde F}(t) \leq K \|z(0)\|_{H^1(\R)}^2 + K\|z(t)\|^3_{H^1(\R)} + K\sum_{j=1}^{N-1} (c_j(t) -c_j(0))^2 + e^{-\sigma_0 L},
\ee
with $\mathcal{\tilde F}(t)$ given by the formula
\bea\label{tFt}
& & \mathcal{\tilde F}(t)   := \nonu\\
& & =  \mathcal{F}(t) - \frac 12 \sum_{j=1}^{N-2} ((c_{j+1}(0)-2c) + (2c-c_j(0)))\int_\R z^2 \psi_j(t) \nonu \\
& & \qquad  - \frac 12 (2c-c_{N-1}(0))\int_\R z^2\psi_{N-1}(t)\nonu \\
& & = \mathcal F(t) - \frac 12 \sum_{j=2}^{N-1} (2c-c_{j}(0))\int_\R z^2 (\psi_j(t)-\psi_{j-1}(t))  - \frac 12 (2c-c_1(0))\int_\R z^2\psi_{1}(t) \nonu \\
&  & =  \frac 12\int_\R \big\{ z_x^2(t) + c(t,x) z^2(t) -3 (c-\varphi_c^2)z^2(t)   - 6 \sqrt{c} R z^2(t) + 3R^2 z^2(t)\big\},
\eea
with 
\bea\label{ctx}
c(t,x) & := & 2c - \sum_{j=2}^{N-1} (2c-c_{j}(0)) (\psi_j-\psi_{j-1})(t)  - (2c-c_1(0)) \psi_{1}(t) \nonu \\
& =& \!\!\! 2c\Big[ 1-   \sum_{j=2}^{N-1}  (\psi_j-\psi_{j-1})(t)  -\psi_{1}(t) \Big]  \nonu\\
& & \quad+  \sum_{j=2}^{N-1} c_{j}(0) (\psi_j-\psi_{j-1})(t)  +c_1(0) \psi_{1}(t).
\eea
We prove in Appendix \ref{B} that this quadratic form is coercive, in the sense that there exists $\la_0>0$ independent of $t$ and $K^*$ such that, thanks to (\ref{Ortho1}) and (\ref{Ortho2}),
\be\label{posit}
\mathcal{\tilde F}(t) \geq  \la_0 \| z(t) \|_{H^1(\R)}^2.
\ee
Therefore, from (\ref{dernier}), (\ref{proof1}), and taking $\al_0$ smaller if necessary, we obtain
$$
\|z(t) \|_{H^1(\R)}^2 \leq K \|z(0)\|_{H^1(\R)}^2 + K\|z(t)\|^3_{H^1(\R)}  + e^{-\sigma_0 L},
$$
and so
$$
\|z(t) \|_{H^1(\R)}^2 \leq K \|z(0)\|^2_{H^1(\R)}+ K e^{-\sigma_0 L},
$$
for some constant $K>0$, independent of $K^*$. Thus, the proof of  Lemma \ref{proofl2} is complete.
\end{proof}

We conclude the proof of Proposition \ref{Pr0}. From (\ref{smallness1}), Lemmas \ref{PROOFL1} and \ref{proofl2}, we have
\bee  
&& \Big\|u(t)  -\varphi_c (\cdot + ct + x_N(t)) +\sqrt{\beta} \sum_{j=1}^{N-1} Q_{c_j^0,\beta}(\cdot + \tilde c_j t +x_j(t))\Big\|_{H^1(\R)} \\ 
& & \quad \leq \|z(t)\|_{H^1(\R)} 
 + \sqrt{\beta}\Big\|\sum_{j=1}^{N-1} [Q_{c_j,\beta}(\cdot + \tilde c_j t +x_j(t)) -Q_{c_j^0,\beta}(\cdot + \tilde c_j t +x_j(t))] \Big\|_{H^1(\R)}\\
& &\quad \leq  \|z(t)\|_{H^1(\R)} + K \sum_{j=1}^{N-1} | c_j(t)- c_j^0| \\
& & \quad \leq  \| z(t) \|_{H^1(\R)} + K\sum_{j=1}^{N-1} |c_j(t)-c_j(0)|+ K\sum_{j=1}^{N-1} |c_j(0)-c_j^0|\\
&& \quad \leq  \| z(t) \|_{H^1(\R)} + K \|z(0)\|^2_{H^1(\R)} + K e^{-\sigma_0 L} + K \alpha  \quad  \leq  \;   K ( \alpha +e^{-\sigma_0 L} ),
\eee
where $K>0$ is a constant independent of $K^*$. Finally, choosing  $K^*=4 K$, we get the desired contradiction. The proof is complete.

\medskip

\bigskip

\section{Sketch of proof of Theorem \ref{EoMK}}\label{4}

\medskip

The proof follows the lines of \cite[Theorem 1]{Martel1} and the proof of Proposition \ref{Pr0} from the previous section. Let us assume the hypotheses of Theorem \ref{EoMK}. Let $T_n\to +\infty$ be an increasing sequence, and
$$
R(t) := \varphi_{c_N^0} (\cdot + c_N^0 t) + \sqrt{\beta} \sum_{j=1}^{N-1} Q_{c_j^0,\beta}(\cdot  + \tilde c_j t), \quad \tilde c_j = 3c_N^0-c_j^0.
$$ 
It is clear that $R(t)-\varphi_{c_N^0} (\cdot + c_N^0 t)$ is uniformly bounded in any $H^s(\R)$, $s\geq 0$. 

\medskip

Now we consider the following Cauchy problem
\bea\label{CP}
(u_{n})_t + ((u_n)_{xx} - u_n^3)_x =0,& &  \quad u_n(t,x)\in \R,\\
u_n(T_n) = R(T_n).& & 
\eea
From \cite{MV}, one has global existence of a unique solution $u_n(t)$ for (\ref{CP}), satisfying $u_n-\varphi_{c_N^0} \in C(\R,H^1(\R))$, with conserved energy (\ref{E}). The main part of the proof is to establish the following uniform estimates:

\begin{prop}[Uniform estimates]\label{Ue0}~

There exist $K, n_0>0$ such that for all $n\geq n_0$, and for all $t\in [T_{n_0}, T_n]$, one has
\be\label{Ue1}
\| u_n(t) - R(t) \|_{H^s(\R)} \leq K_s e^{-\sigma_0 t}, \quad s\in [0, 1^+],
\ee
with $\sigma_0>0$ defined in (\ref{sig0}).
\end{prop}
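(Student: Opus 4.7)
The strategy is to adapt the stability analysis of Proposition \ref{Pr0} to the backward Cauchy problem (\ref{CP}), exploiting the fact that $u_n(T_n)=R(T_n)$ exactly, so that the ``initial data error'' at the endpoint $t=T_n$ vanishes, and the only remaining contributions are exponentially small interaction tails coming from the spatial separation of the kink $\varphi_{c_N^0}$ and the Gardner solitons $Q_{c_j^0,\beta}$ as $t\to+\infty$. The idea is to run a bootstrap argument backward in time, showing that if $\|u_n-R\|_{H^1}$ is controlled by $e^{-\sigma_0 t/2}$ on some subinterval $[T,T_n]$, then in fact $\|u_n-R\|_{H^1}\leq K e^{-\sigma_0 t}$ there, which improves the bootstrap hypothesis for $t$ large enough.

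The plan for the $H^1$-estimate is as follows. First, introduce the modulated decomposition
\begin{equation*}
u_n(t,x) = \varphi_{c_N^0}(x+ c_N^0 t + x_N^n(t)) + \sqrt{\beta}\sum_{j=1}^{N-1} Q_{c_j^n(t),\beta}(x+\tilde c_j t + x_j^n(t)) + z_n(t,x),
\end{equation*}
with modulation parameters determined by the orthogonality conditions (\ref{Ortho1})--(\ref{Ortho2}) of Lemma \ref{dec}, using the implicit function theorem as long as $\|z_n(t)\|_{H^1}$ remains small. Define $T_*^n$ as the infimum of those $T\in[T_{n_0},T_n]$ for which a bootstrap bound $\|z_n(t)\|_{H^1}+\sum_j |c_j^n(t)-c_j^0|\leq K_* e^{-\sigma_0 t/2}$ holds on $[T,T_n]$. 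At $t=T_n$ the bound holds with constant zero. Next, pass to $\tilde u_n$ via the Gardner transformation (\ref{CG2}) and introduce the modified mass $M_j^n(t)$ as in (\ref{Mjj}). The computation of Lemma \ref{th:2}, applied backward in time from $T_n$, yields
\begin{equation*}
|M_j^n(t)-M_j^n(T_n)|\leq K e^{-\sigma_0 t}, \qquad t\in [T_*^n,T_n],
\end{equation*}
since the remainder terms in (\ref{dM1})--(\ref{dM3}) are $O(e^{-\sigma_0(t+L)})$ and here effectively $L=+\infty$ at $T_n$ but the quantitative $e^{-\sigma_0 t}$ tail survives uniformly in $n$.

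Coupling the mass monotonicity with the energy expansion of Lemma \ref{EE0} (which, since $E[u_n]=E[R(T_n)]$ is computable with error $O(e^{-\sigma_0 T_n})$), and proceeding as in Lemmas \ref{PROOFL1} and \ref{proofl2}, one obtains the quadratic control $\sum_j|c_j^n(t)-c_j^n(T_n)|\lesssim \|z_n(t)\|_{H^1}^2+e^{-\sigma_0 t}$ together with the coercivity estimate $\|z_n(t)\|_{H^1}^2\lesssim \tilde{\mathcal F}(t)$ from (\ref{posit}). Combining these,
\begin{equation*}
\|z_n(t)\|_{H^1}^2 \leq K\bigl(e^{-\sigma_0 t} + \|z_n(t)\|_{H^1}^3\bigr), \qquad t\in [T_*^n,T_n],
\end{equation*}
which, for $K_*$ chosen large and $T_{n_0}$ large enough (so that $\|z_n(t)\|_{H^1}$ is absorbable), closes the bootstrap to give $\|z_n(t)\|_{H^1}\leq Ke^{-\sigma_0 t/2}$, hence actually $\|z_n(t)\|_{H^1}^2\leq K e^{-\sigma_0 t}$, on the whole interval $[T_{n_0},T_n]$ uniformly in $n\geq n_0$. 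This gives (\ref{Ue1}) for $s=1$, and by interpolation for $s\in[0,1]$.

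The extension to $s\in(1,1^+]$ requires propagating the exponential decay to slightly higher Sobolev regularity. The plan is to use the Kenig--Ponce--Vega local well-posedness theory \cite{KPV} combined with the $L^4_tL^\infty_x$ Strichartz-type smoothing for mKdV, writing the equation satisfied by $z_n$ as a linear mKdV equation around $R(t)$ plus source terms that are themselves $O(e^{-\sigma_0 t})$ in $H^{s-1}$, and integrating forward using Duhamel. The main obstacle is to ensure that all constants in this higher-regularity bootstrap are independent of $n$ and of $T_n$, which reduces to verifying that the linearized evolution around the multi-kink profile $R(t)$ admits uniform Strichartz/smoothing bounds on intervals where the $H^1$ estimate already holds; the exponential decay of the source then forces the exponential decay in $H^s$ via a standard Gr\"onwall-type argument. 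The truly delicate step is the $H^1$ closure above: controlling the scaling parameters $c_j^n(t)$ via the modified masses, since the natural mass of $u_n$ is not finite and the only substitute is the localized quantity $M_j^n$ coming from the Gardner side of the diagram.
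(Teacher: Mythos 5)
There is a genuine gap at what you yourself call the delicate step, namely the control of the scaling parameters through the modified masses, and it comes from the time-orientation of the monotonicity. Lemma \ref{th:2} is a one-sided statement: the mass to the left, $M_j$, is almost non-decreasing forward in time, so applying it on $[t,T_n]$ only gives $M_j^n(t)\leq M_j^n(T_n)+Ke^{-\sigma_0 t}$; your claimed two-sided bound $|M_j^n(t)-M_j^n(T_n)|\leq Ke^{-\sigma_0 t}$ does not follow from "the computation of Lemma \ref{th:2} applied backward in time". Worse, the side that \emph{is} available is the unfavorable one for the scheme you propose. In the forward stability proof the reference time (where the error is small) lies in the past, and step 2 of Lemma \ref{PROOFL1} uses exactly $M_j(0)\leq M_j(t)+Ke^{-\sigma_0 L}$, which after the Abel summation with the positive gaps $c_{j+1}-c_j$, $2c-c_{N-1}$ produces the \emph{upper} bound on $\mathcal F(t)$ needed before invoking the coercivity (\ref{posit}). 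In your backward bootstrap the reference time $T_n$ (where $z_n=0$) lies in the future, and the transplanted argument would require $M_j^n(T_n)\leq M_j^n(t)+Ke^{-\sigma_0 t}$, i.e.\ precisely the missing inequality: with only $M_j^n(t)\leq M_j^n(T_n)+Ke^{-\sigma_0 t}$ one obtains bounds on $d_j(T_n)-d_j(t)$ from below where the energy identity (with the positive coefficients $2c-c_j$) requires them from above, so the analogues of Lemmas \ref{PROOFL1} and \ref{proofl2} do not close and the inequality $\|z_n(t)\|_{H^1}^2\leq K(e^{-\sigma_0 t}+\|z_n(t)\|_{H^1}^3)$ is not established. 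This is exactly why the paper does \emph{not} run Proposition \ref{Pr0} backward with modulated scalings: following Martel \cite{Martel1}, no scaling modulation is performed, Lemma \ref{PROOFL1} is dispensed with, and the bad directions $Q_{c_j,\beta}$ in (\ref{posit}) are handled by a different mechanism adapted to the construction (in Martel's scheme, the projections $\int z\,Q_{c_j,\beta}$ are controlled directly, e.g.\ by integrating their time derivative backward from $T_n$ under the bootstrap hypothesis, where they vanish). To repair your proof you would have to supply such a substitute for the missing side of the mass inequality.

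Two secondary points. First, even granting the $H^1$ closure, your argument only yields $\|z_n(t)\|_{H^1}\leq Ke^{-\sigma_0 t/2}$, which is acceptable only because $\sigma_0$ may be decreased while still satisfying (\ref{sig0}); and you still must convert the modulated estimate into (\ref{Ue1}) by integrating $|x_j^{n\prime}(t)|$ and $|c_j^{n\prime}(t)|$ backward from $T_n$ to bound the parameter shifts, a step absent from the plan (and made necessary precisely because you chose to modulate). Second, the extension to $s\in(1,1^+]$ as described (Duhamel around $R(t)$ plus Gr\"onwall) does not give constants uniform in $n$: the linearized coefficient is bounded but not small, so a naive Gr\"onwall on $[t,T_n]$ produces a factor growing like $e^{C(T_n-t)}$. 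The known route (again \cite{Martel1}) uses higher-order energy functionals whose time derivatives are exponentially decaying, at the price of a possibly smaller decay rate, rather than Strichartz smoothing.
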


The proof of this result is similar to the proof of Proposition \ref{Pr0}, but it is easier since we do not need to modulate the scaling parameters $c_j(t)$ in Lemma \ref{dec}. In particular, the term $\sim \sum_{j}c_j^{3/2}(t)$ in (\ref{EE}) is constant. Lemma \ref{th:2} holds with no modifications. In order to control the directions $Q_{c_j,\beta}$  in (\ref{posit}), we only use Lemma \ref{th:2}, so Lemma \ref{PROOFL1} is not needed. The reader may consult \cite{Martel1} for a detailed proof.

\medskip

As a consequence of the above estimate, one has, up to a subsequence, and for all $t\geq T_{n_0}$,
\bee
u_n(t) -\varphi_{c_N^0}(\cdot +c_N^0 t) \rightharpoonup  u_0 \quad \hbox{ in } H^1(\R),\\
u_n(t) -\varphi_{c_N^0}(\cdot +c_N^0 t) \to  u_0 \quad \hbox{ in } L^2(K), 
\eee
for all $K\subset \R$ compact. On the other hand, note that from (\ref{CG2}) and (\ref{eqtu}) the function
$$
\tilde u_n(t,y) := \frac 1{\sqrt{\beta}} ( u_n(t, y - 3c_N^0 t ) -\varphi_{c_N^0} (y -2c_N^0t) ), \quad c_N^0= \frac 1{9\beta}, 
$$
satisfies the equation (\ref{eqtu}). Arguing as in \cite[eqn. (14)]{Martel1}, one has the following

\begin{lem}[Egorov estimate]\label{UEE}~

There exists $\ve_0>0$ such that, for all $0<\ve<\ve_0$, the following holds. There exists $R_0=R_0(\ve)>0$ such that for all $n\geq n_0$,
\be\label{UnifB}
\| u_n(T_{n_0}) -\varphi_{c_N^0} (\cdot + c_N^0T_{n_0} ) \|_{L^2(|x|>R_0)} \leq \ve.
\ee
\end{lem}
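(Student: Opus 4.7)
My plan is to transport the localization of $\tilde u_n$ from the terminal time $T_n$, where it is an \emph{exact} sum of Gardner solitons, backwards to $T_{n_0}$ by a weighted $L^2$ almost-monotonicity of the type used in Lemma \ref{th:2} and in \cite[eqn.~(14)]{Martel1}.

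Concretely, via the transformation (\ref{CG2}) I define $\tilde u_n(t,y) := \tfrac 1{\sqrt{\beta}}\bigl[u_n(t, y - t/(3\beta)) - \varphi_{c_N^0}(y - 2c_N^0 t)\bigr]$, which solves equation (\ref{eqtu}). Since the change of variable is an $L^2$-isometry and involves only the fixed translation $T_{n_0}/(3\beta)$, the claim (\ref{UnifB}) is equivalent to the uniform-in-$n$ bound
$$
\|\tilde u_n(T_{n_0})\|_{L^2(\{|y - T_{n_0}/(3\beta)| > R_0'\})} \leq \ve/\sqrt{\beta}
$$
for some $R_0' = R_0'(\ve)>0$. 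The essential input is the explicit identity $\tilde u_n(T_n,y) = \sum_{j=1}^{N-1} Q_{c_j^0,\beta}(y - c_j^0 T_n)$ at the terminal time, whose $L^2$-tails outside a $B$-neighbourhood of the soliton centres $\{c_j^0 T_n\}_j$ are of order $e^{-\sqrt{c_1^0}\,B}$.

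For each sign I would introduce a smooth non-decreasing cutoff $\Psi^\pm(t,y)$ of the type used in Lemma \ref{th:2}, transported with a velocity $\lambda_\pm$ lying outside the soliton spectrum $\{c_j^0\}_{j=1}^{N-1}$ and satisfying $|\lambda_\pm| < 2c_N^0$, so that (i) at $t = T_n$ the support of $\Psi^\pm(T_n,\cdot)$ is disjoint from every soliton centre, forcing $\int \tilde u_n^2(T_n)\Psi^\pm(T_n) \leq K e^{-\sqrt{c_1^0}\,B}$; and (ii) throughout $[T_{n_0}, T_n]$ this support stays at distance $\gtrsim t + B$ from the kink centre $y = 2c_N^0 t$, so that the coefficients $(\varphi_c^2 - c)$ and, where relevant, $(\sqrt{c} + \varphi_c)$ on the right-hand side of (\ref{eqtu}) are $O(e^{-\sigma_0 t})$ on $\supp\Psi^\pm$. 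Running the integration-by-parts identity of Lemma \ref{th:2} for $J^\pm(t) := \int \tilde u_n^2(t)\Psi^\pm(t)\,dy$, and absorbing the perturbative contributions by the exponential smallness just mentioned, yields the almost-monotonicity
$$
\tfrac{d}{dt}J^\pm(t) \geq -K\, e^{-\sigma_0 (t + B)}
$$
(up to a time-reversal when $\lambda_\pm < 0$). Integrating from $T_{n_0}$ to $T_n$ and using the bound on $J^\pm(T_n)$ yields $J^\pm(T_{n_0}) \leq J^\pm(T_n) + K e^{-\sigma_0 T_{n_0}} \leq K e^{-\sqrt{c_1^0} B} + K e^{-\sigma_0 T_{n_0}}$, which can be made $\leq \ve^2/(2\beta)$ by choosing $B = B(\ve)$ large enough; this gives $R_0' = B + O(1)$.

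The main obstacle is the choice of the cutoff velocities $\lambda_\pm$ so that the geometric constraints above are simultaneously compatible: the monotonicity of $J^\pm$ must point in the direction that allows transfer of smallness from $T_n$ back to $T_{n_0}$, the cutoff must separate from every soliton at $t = T_n$, and the cutoff must remain far from the kink throughout the evolution so that the non-autonomous terms of (\ref{eqtu}) remain negligible. The strict separation $\max_j c_j^0 < 2c_N^0 = 1/(9\beta)$ between the Gardner soliton speeds and the kink speed in the Gardner frame is precisely what makes these constraints compatible; once the geometry is fixed, the remaining estimates are parallel to those carried out in the proof of Lemma \ref{th:2}.
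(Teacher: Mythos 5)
Your overall strategy is the one the paper intends: after the change of variables (\ref{CG2}) the paper itself only says ``arguing as in \cite[eqn. (14)]{Martel1}'', i.e.\ transport the exact localization of $\tilde u_n(T_n)=\sum_{j}Q_{c_j^0,\beta}(\cdot-c_j^0T_n)$ back to $T_{n_0}$ by almost-monotonicity of weighted masses whose lines stay away from the solitons and from the kink, exactly as in Lemma \ref{th:2}. For the \emph{left} tail your argument is correct and is essentially Lemma \ref{th:2}: the cutoff of type $1-\psi(y-\sigma(t))$ with $\sigma'=\lambda\in(0,c_1^0)$ has Kato terms $+3\int \tilde u_y^2\psi'+\lambda\int\tilde u^2\psi'$ with the favourable sign, the cubic terms are absorbed, the kink coefficients are exponentially small on the relevant support, and the resulting inequality indeed transports the smallness at $T_n$ backwards.

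The gap is the \emph{right} half-line $y>3c_N^0T_{n_0}+R_0$, which you treat symmetrically but which is not symmetric. First, your geometric constraints are incompatible there: a cutoff covering a region to the right of the kink, moving with $|\lambda_+|<2c_N^0$, is overtaken by the kink (speed $2c_N^0$) well before $T_n$, so it cannot ``remain far from the kink throughout the evolution''; if instead you place the line to the left of the kink, its support (a right half-line) contains the kink region, again violating your requirement (ii). Second, and more fundamentally, for a right-type functional $J(t)=\int\tilde u_n^2\,\psi(y-\sigma(t))$ the Kato terms enter as $-3\int(\tilde u_n)_y^2\psi'-\lambda\int\tilde u_n^2\psi'$, i.e.\ with the \emph{unfavourable} sign for the inequality $\frac{d}{dt}J\geq -Ke^{-\sigma_0(t+B)}$ you need in order to carry smallness from $T_n$ back to $T_{n_0}$; these terms are not exponentially small in $t+B$, since they live on $\operatorname{supp}\psi'$ where no spatial smallness of $\tilde u_n$ is known a priori (that is precisely what is being proved). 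The best one can do there is invoke the uniform estimate (\ref{Ue1}), which bounds them by $K e^{-2\sigma_0 t}$; integrating then yields $J(T_{n_0})\leq J(T_n)+Ke^{-2\sigma_0T_{n_0}}+Ke^{-\theta B}$, and the term $Ke^{-2\sigma_0T_{n_0}}$ does \emph{not} shrink when you enlarge $B$ (your final step, ``can be made $\leq \ve^2/(2\beta)$ by choosing $B=B(\ve)$ large'', silently assumes every error carries a factor $e^{-\sigma_0 B}$, which fails here). So the right-hand tail estimate, which is exactly the part delegated to \cite{Martel1}, is not established by your argument as written; you need either the additional ingredient of Martel's proof for that side, or a different route (e.g.\ working with weak $H^1$ convergence plus weak lower semicontinuity, thereby avoiding the uniform right-tail bound altogether).
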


From (\ref{UnifB}) one has that 
$$
u_n(T_{n_0}) -\varphi_{c_N^0}(\cdot +c_N^0 T_{n_0}) \to  u_0 \quad \hbox{ in } L^2(\R), 
$$
and by interpolation and (\ref{Ue1}), the convergence is in $H^1(\R)$.  Let $U$ be the unique solution of (\ref{mKdV}) such that  $U(T_{n_0}) = \varphi_{c_N^0}(\cdot +c_N^0 T_{n_0})+ u_0$ (cf \cite[Proposition 3.1]{MV}). From the uniform $H^1(\R)$ continuity of the mKdV flow on compact sets of time, one has
$$
u_n(t) \to U(t) \quad \hbox{ in }ÊH^1(\R),
$$
for all $t\geq T_{n_0}$. Therefore, $\| u_n(t) - U(t)\|_{H^1(\R)} \to 0$ as $n\to +\infty$, for all $t\geq T_{n_0}$. Finally, passing to the limit
in (\ref{Ue1}), we get the desired existence conclusion.

\medskip

\noindent
{\bf Uniqueness.}  Using once again the transformation (\ref{CG2}), and the equation (\ref{eqtu}), we claim that from \cite{Martel1}, one has the following

\begin{lem}[Exponential decay]\label{Edec}~

Let $v\in \varphi_{c_N^0} (\cdot + c_N^0 t) + C(\R, H^1(\R))$ be a solution of (\ref{mKdV}) satisfying (\ref{lim4}). Then there exists $K, T_0>0$ such that
$$
\sup_{t\geq T_0} \| v(t) - \varphi_{c_N^0} (\cdot + c_N^0t + x_N^+)  - \sqrt{\beta}\sum_{j=1}^{N-1} Q_{c_j^0,\beta} (\cdot+ \tilde c_j t  +x_j^+) \|_{H^1(\R)} \leq Ke^{-\sigma_0 t}.
$$
\end{lem}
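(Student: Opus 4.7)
The plan is to adapt the proof of Proposition \ref{Pr0} (modulation plus almost-conserved masses) to obtain an \emph{exponential} rate, following the general strategy of Martel \cite{Martel1}. The crucial shift in viewpoint is that the almost-monotone functionals should be integrated \emph{backwards} from $+\infty$ down to a generic time $t$, using that the perturbation $z(t)$ vanishes at $+\infty$ by hypothesis (\ref{lim4}); this converts the $O(e^{-\sigma_0 t})$ tail of the coupling terms into exponential decay of the full error.

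First I would modulate. By continuity of the mKdV flow in $\varphi_{c_N^0}(\cdot+c_N^0t)+H^1(\R)$ and the implicit function theorem (as in Lemma \ref{dec}), the hypothesis (\ref{lim4}) guarantees that, for $T_0$ large enough, we may decompose
$$
v(t) \;=\; \varphi_{c_N^0}(\cdot + c_N^0 t + x_N(t)) + \sqrt{\beta}\sum_{j=1}^{N-1} Q_{c_j(t),\beta}(\cdot + \tilde c_j t + x_j(t)) + z(t), \qquad t\geq T_0,
$$
with $C^1$ modulation parameters satisfying the orthogonality conditions (\ref{Ortho1})--(\ref{Ortho2}), and with $\|z(t)\|_{H^1(\R)}$, $|c_j(t)-c_j^0|$, $|x_j(t)-x_j^+|$ all tending to zero as $t\to +\infty$. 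Using the Gardner-type transformation (\ref{tu}) I would then pass to $\tilde v(t,y)$, which satisfies the perturbed Gardner equation (\ref{eqtuV}) and is close, on the left of the kink, to a sum of Gardner solitons.

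Next I would run the monotonicity machinery. The modified masses $M_j(t)$ of (\ref{Mjj}), applied to $\tilde v$, satisfy the almost-monotonicity identity of Lemma \ref{th:2} with an error bounded by $K e^{-\sigma_0 t}$, \emph{uniformly} for $t\geq T_0$: this is because the coupling terms on the right-hand side of (\ref{eqtuV}) involve the factors $(\varphi_c^2-c)$ and $(\varphi_c+\sqrt c)$, which decay exponentially in the region where the Gardner solitons are supported, while the mutual separation of the Gardner solitons themselves contributes the usual $e^{-\sigma_0 t}$ factor. Combining the energy expansion (\ref{EE}), the mass controls, and the coercivity (\ref{posit}), I would obtain a coercive functional $\mathcal{\tilde F}(t) \gtrsim \|z(t)\|_{H^1(\R)}^2$ together with a differential inequality of the form
$$
\frac{d}{dt}\bigl(\mathcal{\tilde F}(t) + \text{mass corrections}\bigr) \;\geq\; -K\,e^{-\sigma_0 t},
$$
whose right-hand side is integrable on $[T_0,+\infty)$.

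Finally, I would integrate this inequality from $t$ to $+\infty$. Since the quantity on the left vanishes at infinity (by the convergence in Step 1), one obtains $\mathcal{\tilde F}(t) \leq K e^{-\sigma_0 t}$, hence $\|z(t)\|_{H^1(\R)}^2 \leq K e^{-\sigma_0 t}$. A quadratic control of the scaling and translation parameters, in the style of Lemma \ref{PROOFL1} but now run backwards from $+\infty$, then yields $|c_j(t)-c_j^0| + |x_j(t)-x_j^+| \leq K e^{-\sigma_0 t/2}$, and the stated bound follows by the triangle inequality. The main obstacle is verifying the uniform-in-$t$ character of the $e^{-\sigma_0 t}$ error in the monotonicity identities: one must check that the kink--soliton interaction terms genuinely contribute only $O(e^{-\sigma_0 t})$ to $\frac{d}{dt}(\mathcal{\tilde F}+M_j)$, which ultimately rests on the strict inequality $c_{N-1}^0 < \tfrac{2}{9\beta} = 2c_N^0$ forcing the big kink to separate exponentially from all Gardner components. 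This mirrors \cite[\S 3]{Martel1} adapted to the framework of Figure \ref{F2}.
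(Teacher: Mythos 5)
Your proposal is correct and follows essentially the route the paper intends: the paper gives no details for Lemma \ref{Edec}, simply transferring the argument of Martel \cite{Martel1} through the transformation (\ref{CG2}) and equation (\ref{eqtu}), and your sketch --- modulation for $t\geq T_0$, almost-monotonicity of the localized masses with errors now of size $O(e^{-\sigma_0 t})$ thanks to the linear-in-time separation forced by $c_j^0<2c_N^0$, energy conservation combined with the coercivity (\ref{posit}), and integration backwards from $+\infty$ where $z$ vanishes --- is exactly that adaptation. The only cosmetic difference is that recovering $|c_j(t)-c_j^0|+|x_j(t)-x_j^+|$ may degrade the rate to $e^{-\sigma_0 t/2}$, which is harmless since $\sigma_0$ is only constrained by (\ref{sig0}).
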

Using this property, the uniqueness result is just a consequence of the analysis carried out in \cite{Martel1}. We skip the details.

\bigskip

\appendix

\section{Proof of (\ref{posit})}\label{B}

In this section we sketch the proof of (\ref{posit}). See e.g. \cite{MMT} for a detailed, similar proof.  First of all, note that from (\ref{tFt}), (\ref{R}) and (\ref{tRtz}) one has
$$
\mathcal{\tilde F}(t) = \frac 12 \int_\R (z_x^2 + c(t,x) z^2 -3(c-\varphi_c^2) z^2 - (2\tilde R -3\beta \tilde R^2)z^2 ), 
$$
with $c(t,x)$ given in (\ref{ctx}).

\medskip

\noindent
1. We recall the following well-known result.

\begin{lem}[Positivity of the Zhidkov functional, see \cite{Z,MV}]\label{Zi}~

There exists $\la_0>0$ such that for all $z\in H^1(\R)$, with  $\int_\R z \varphi_c' =0,$
one has
\be\label{443}
\frac 12 \int_\R (z_x^2 + 2c z^2 - 3(c-\varphi_c^2)z^2 ) \geq \la_0 \int_\R (z_x^2 + z^2).
\ee
\end{lem}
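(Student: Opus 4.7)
\textbf{Plan of proof of Lemma \ref{Zi}.}

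The quantity to estimate can be rewritten as
\[
\frac 12 \int_\R (z_x^2 + 2cz^2 - 3(c-\varphi_c^2)z^2) \;=\; \frac 12 \langle \mathcal L z, z\rangle, \qquad \mathcal L := -\partial_x^2 + (3\varphi_c^2 - c).
\]
The plan is to establish $L^2$-coercivity of $\mathcal L$ under the orthogonality condition $\int z\varphi_c' = 0$, and then upgrade this to $H^1$-coercivity by an elementary interpolation. First I would verify, by differentiating the kink equation $\varphi_c'' + c\varphi_c - \varphi_c^3 = 0$ in (\ref{ecvarfi}), that $\mathcal L \varphi_c' = 0$. Since $\varphi_c' > 0$ and belongs to $L^2(\R)$ (because $\varphi_c \to \pm\sqrt{c}$ exponentially), classical Sturm--Liouville theory implies that $\varphi_c'$ is the simple ground state of $\mathcal L$ on $L^2(\R)$, associated to the bottom of the spectrum, which equals $0$.

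Next I would locate a spectral gap above $0$. The potential $3\varphi_c^2 - c$ tends to $2c > 0$ at $\pm\infty$, so by Weyl's theorem the essential spectrum of $\mathcal L$ is $[2c,\infty)$. After the rescaling $s = x/\sqrt{c}$ the operator is unitarily equivalent to $c$ times
\[
-\partial_s^2 + 2 - 3\operatorname{sech}^2\bigl(s/\sqrt{2}\bigr),
\]
a Pöschl--Teller Schr\"odinger operator whose discrete spectrum is explicitly computable; one checks that it contains only the simple eigenvalue $0$ (with eigenfunction $\varphi_c'$) below the continuous spectrum $[2,\infty)$. Combining this with the ground state property yields a positive gap $\mu := \operatorname{dist}(0, \sigma(\mathcal L)\setminus\{0\}) > 0$, hence
\begin{equation}\label{planL2}
\langle \mathcal L z, z\rangle \;\geq\; \mu \,\|z\|_{L^2(\R)}^2 \qquad \text{for every } z \in H^1(\R) \text{ with } \int_\R z\varphi_c' = 0.
\end{equation}

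Finally I would upgrade \eqref{planL2} to the stated $H^1$-bound. For any $\theta \in (0,1)$ one has
\[
\langle \mathcal L z, z\rangle \;=\; \theta \int_\R z_x^2 + (1-\theta)\langle \mathcal L z, z\rangle + \theta \int_\R (3\varphi_c^2 - c)z^2 \;\geq\; \theta \int_\R z_x^2 + \bigl((1-\theta)\mu - \theta c\bigr)\|z\|_{L^2}^2,
\]
since $|3\varphi_c^2 - c| \leq 2c$ on $\R$. Choosing $\theta$ small enough so that $(1-\theta)\mu - \theta c > 0$, we obtain (\ref{443}) with $\lambda_0 = \tfrac 12 \min(\theta,\ (1-\theta)\mu - \theta c) > 0$.

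The main technical step is the spectral-gap claim: identifying the absence of eigenvalues in $(0, 2c)$. This is where the explicit Pöschl--Teller form of the potential is essential, and is exactly the point treated in detail in Zhidkov \cite{Z} and Merle--Vega \cite{MV}; the remaining passages (ground state identification and $L^2 \to H^1$ upgrade) are standard soft arguments.
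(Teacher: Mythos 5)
Your argument is sound and is essentially the standard spectral proof behind the result the paper simply quotes from Zhidkov and Merle--Vega: identify the quadratic form with $\langle \mathcal L z,z\rangle$, $\mathcal L=-\partial_x^2+3\varphi_c^2-c$, note $\mathcal L\varphi_c'=0$ with $\varphi_c'>0$ so that $0$ is the simple ground state, locate $\sigma_{ess}(\mathcal L)=[2c,\infty)$, deduce a gap, and interpolate to get the $H^1$ bound; the interpolation step is correct (note only that $3\varphi_c^2-c\geq -c$ is what gives your $-\theta c$ term, not the two-sided bound $|3\varphi_c^2-c|\leq 2c$, which would give $-2\theta c$ --- either suffices for small $\theta$).

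One factual correction: the rescaled P\"oschl--Teller operator $-\partial_s^2+2-3\operatorname{sech}^2(s/\sqrt2)$ does \emph{not} have $0$ as its only eigenvalue below the continuum. Writing the potential as $-\ell(\ell+1)\operatorname{sech}^2$ with $\ell=2$ after the further change of variable $u=s/\sqrt 2$, one finds \emph{two} bound states, at $0$ and at $3/2$ (for $\mathcal L$ itself, at $0$ and $3c/2$); the latter is the well-known internal mode of the $\tanh$ kink. This does not damage your proof, because the extra eigenvalue is strictly positive, so $\mu=\operatorname{dist}(0,\sigma(\mathcal L)\setminus\{0\})=3c/2>0$ and \eqref{planL2} stands. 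In fact the explicit P\"oschl--Teller computation is not needed at all: since $\varphi_c'>0$ is an $L^2$ eigenfunction it is the simple ground state, the spectrum below $\sigma_{ess}=[2c,\infty)$ is discrete and can accumulate only at $2c$, so the gap above $0$ is automatic. (Also, the rescaling should be $s=\sqrt{c}\,x$, not $s=x/\sqrt{c}$, to reach the stated unitary equivalence --- a harmless slip.) With these corrections your proof is complete and consistent with the argument in \cite{Z,MV} that Lemma \ref{Zi} cites.
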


\medskip

\noindent
2. Let  $\Phi\in C^2(\R)$, with $\Phi(s)=\Phi(-s)$, $\Phi'\leq 0$ on $\R^+$, 
and
$$
\Phi(s)=1 \hbox{ on $[0,1]$;}\quad 
\Phi(s)=e^{-s} \hbox{ on $[2,+\infty)$,}\quad
e^{-s}\le \Phi(s)\le 3 e^{-s} \quad \hbox{on
$\R^+$.}
$$ 
Finally, let $\Phi_{B}(s) :=\Phi (\frac sB).$

\begin{lem}[Localized coercivity, see e.g. \cite{MMT,We1}]\label{loc_pos}~

There exists $B_0, \la_0>0$ such that, for all $B>B_0$, if $z\in H^1(\R)$ satisfies
$$
\int_\R  Q_c z =\int_\R Q_c' z=0,
$$
then
\be\label{singlelocal}
\int_\R \Phi_{B} (z_x^2- (2Q_c^{2} - 3\beta Q_c^2 )z^2 + cz^2) \geq \la_0 \int_\R \Phi_{B} (z_x^2+z^2).
\ee
\end{lem}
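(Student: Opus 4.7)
The plan is to deduce this localized coercivity from the standard, unweighted coercivity of the linearized Gardner operator $\mathcal{L} := -\partial_x^2 + c - 2Q_c + 3\beta Q_c^2$ acting around the soliton $Q_c$. First, I recall (in the Weinstein tradition, cf.~\cite{We1}) that on $\{Q_c,Q_c'\}^\perp$, $\mathcal{L}$ is strictly positive:
\begin{equation}
\int_\R \bigl(z_x^2 + cz^2 - 2Q_c z^2 + 3\beta Q_c^2 z^2\bigr) \geq \tilde\lambda_0 \|z\|_{H^1(\R)}^2,
\label{unweighted}
\end{equation}
whenever $\int Q_c z = \int Q_c' z = 0$. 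This follows from the spectral picture of $\mathcal{L}$: essential spectrum $[c,+\infty)$ (since $Q_c$ decays exponentially at infinity), a simple negative eigenvalue with positive ground state, and kernel spanned by $Q_c'$ (immediate from differentiating the soliton equation in $x$). The Vakhitov--Kolokolov condition (\ref{WC}), combined with the identity $\mathcal{L}(\partial_c Q_c) = -Q_c$, forces the negative eigendirection of $\mathcal{L}$ to have nonzero pairing with $Q_c$, so that orthogonality to $Q_c$ kills it.

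Second, to upgrade (\ref{unweighted}) to the $\Phi_B$-weighted version I would split $z = z_1 + z_2$ with $z_1 := \chi_B z$ and $z_2 := (1-\chi_B) z$, where $\chi_B$ is a smooth cutoff equal to $1$ on $[-B,B]$, supported in $[-2B,2B]$, and satisfying $|\chi_B'| \leq C/B$. On the support of $z_2$ one has $|Q_c(x)| + Q_c^2(x) \leq Ce^{-\sqrt{c}B}$, so for $B$ large
\begin{equation*}
\int_\R \Phi_B \bigl(z_{2,x}^2 - (2Q_c - 3\beta Q_c^2) z_2^2 + c z_2^2\bigr) \geq \tfrac{c}{2} \int_\R \Phi_B (z_{2,x}^2 + z_2^2).
\end{equation*}
For $z_1$, since $\Phi_B$ is comparable to a positive constant on $\supp\chi_B \subset [-2B,2B]$, one can invoke (\ref{unweighted}); the orthogonality defects are exponentially tame, since the exponential decay of $Q_c$ gives
\begin{equation*}
\Bigl|\int_\R Q_c z_1\Bigr| = \Bigl|\int_\R Q_c z_2\Bigr| \leq Ce^{-\sqrt{c}B}\|z\|_{L^2(\R)},
\end{equation*}
and similarly for $Q_c'$. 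A standard Gram--Schmidt correction on $z_1$ (subtracting its $L^2$-projection onto $\mathrm{span}\{Q_c,Q_c'\}$) therefore costs only $O(e^{-\sqrt{c}B})\|z\|_{L^2(\R)}^2$ in the resulting coercivity estimate.

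The main technical point I expect to handle carefully is the collection of cross and commutator terms: expanding $z_x^2 = z_{1,x}^2 + z_{2,x}^2 + 2 z_{1,x} z_{2,x}$ produces a mixed piece supported on $\{B \leq |x| \leq 2B\}$ involving $\chi_B'$, and the weight $\Phi_B$ itself contributes through $|\Phi_B'|/\Phi_B \leq C/B$. After one integration by parts, all such contributions can be bounded by $C(B^{-1} + e^{-\sqrt c B})\int_\R \Phi_B(z_x^2 + z^2)$. Combining the two regional estimates and choosing $B_0$ so large that both $Ce^{-\sqrt c B_0}$ and $C/B_0$ are absorbed by $\tilde\lambda_0/2$ yields (\ref{singlelocal}) with $\lambda_0 := \tilde\lambda_0/2$. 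An equivalent and slightly cleaner route, avoiding the explicit split of $z$, is to test (\ref{unweighted}) directly against $\tilde z := \Phi_B^{1/2} z$; the same slow-variation estimate $|\Phi_B'|/\Phi_B \leq C/B$ then governs the commutator $[\partial_x, \Phi_B^{1/2}]$ and leads to the same conclusion.
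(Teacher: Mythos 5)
Your overall strategy is the standard one: the paper itself offers no proof of this lemma (it is quoted from \cite{MMT,We1}), and the argument you outline --- reduce to the unweighted Weinstein coercivity of $\mathcal L=-\partial_x^2+c-2Q_c+3\beta Q_c^2$ on $\{Q_c,Q_c'\}^{\perp}$, then transfer it to the weight $\Phi_B$ using the slow variation $|\Phi_B'|\leq C\Phi_B/B$ --- is exactly the argument of the cited references. In particular your closing ``cleaner route'', i.e.\ substituting $\tilde z:=\Phi_B^{1/2}z$ into the unweighted form and controlling the commutator terms, is precisely how \cite{MMT} proceed, and it works. (Two small remarks: the factor $2Q_c^{2}$ in the displayed statement is a typo for $2Q_c$, which you silently and correctly fixed; and ``nonzero pairing with the negative eigendirection'' is not by itself the right justification for positivity on $\{Q_c\}^{\perp}$ --- what is used is $\mathcal L(\partial_cQ_c)=-Q_c$ together with the sign $(\partial_cQ_c,Q_c)=\frac12\partial_c\int_\R Q_c^2>0$, i.e.\ (\ref{WC}), which you do invoke.)

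The route you present as the main one, however, does not close as written, for two concrete reasons. First, on the transition annulus $B\leq|x|\leq2B$ the weight $\Phi_B$ differs from $1$ by a fixed factor (it ranges over $[e^{-2},1]$), and coercivity of an \emph{indefinite} quadratic form is not inherited when the form is multiplied by a weight merely comparable to a constant: writing $\int_\R\Phi_B\,G(z_1)=\int_\R G(z_1)-\int_\R(1-\Phi_B)G(z_1)$ with $G$ the quadratic density, the second term is of size $O(1)\int_{B\leq|x|\leq2B}(z_{1,x}^2+z_1^2)$, i.e.\ comparable to the local $H^1$ density, not $O(1/B)$ nor exponentially small, so it cannot be absorbed ``by choosing $B_0$ large''; the cross terms are harmless, but this loss is not. (It disappears if the cutoff is supported where $\Phi_B\equiv1$, i.e.\ inside $[-B,B]$, or if one works with $\Phi_B^{1/2}z$ from the start.) Second, your orthogonality-defect and Gram--Schmidt corrections are measured by $\|z\|_{L^2(\R)}$, which is \emph{not} controlled by the right-hand side of (\ref{singlelocal}), since $z^2/(\Phi_B z^2)=\Phi_B^{-1}$ is unbounded; the defects must instead be estimated against $\|\Phi_B^{1/2}z\|_{L^2(\R)}$, which succeeds because $Q_c\Phi_B^{-1/2}\lesssim e^{-\sqrt c|x|/2}$ once $B$ is large enough that $1/(2B)<\sqrt c/2$. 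Both issues are automatically handled by committing to the substitution $\tilde z=\Phi_B^{1/2}z$: the weighted form equals the unweighted form of $\tilde z$ up to $O(1/B)\int_\R\Phi_B(z_x^2+z^2)$, the almost-orthogonality of $\tilde z$ costs only $O(e^{-\sqrt cB/2})\int_\R\Phi_B(z_x^2+z^2)$, and the conclusion follows with $\la_0$ half the unweighted constant.
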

Note that a similar argument can be followed in order to prove a localization property for the Zhidkov functional considered in Lemma \ref{Zi}. We skip the details.

\medskip 

\noindent
3. Now we perform a localization argument, as in \cite{MMT}. One has from (\ref{443}),
$\mathcal{\tilde F}(t)  =   \sum_{j=1}^{N} \mathcal{F}_j(t) + (\mathcal{\tilde F}(t) -  \sum_{j=1}^{N} \mathcal{F}_j(t)),$ with 
$$
\mathcal F_j(t) := \frac 12 \int_\R \Phi_{B,j}(z_x^2 + c_j(t) z^2 -(2Q_{c_j,\beta} -3\beta Q_{c_j,\beta}^2 )z^2), \; \Phi_{B,j} :=\Phi_B(x + \tilde c_j t + x_j(t)),
$$
for all $j=1,\ldots, N-1$, and
$$
\mathcal F_N(t) := \frac 12 \int_\R \Phi_{B,N}(z_x^2 + 2c z^2 - 3(c-\varphi_c^2)z^2), \quad \Phi_{B,N}:=\Phi_B(x + c t + x_N(t)).
$$
From Lemma \ref{loc_pos}, one has for $B$ large enough,
$$
\mathcal{F}_j(t) \geq \la_0 \int_\R \Phi_{B,j}(z_x^2 +  z^2)(t,x)dx,
$$
for all $j=1,\ldots, N$. On the other hand,
\bee
\mathcal{\tilde F}(t) -\sum_{j=1}^N \mathcal{F}_j(t) & =&  \int_\R \Big( 1 - \sum_{j=1}^N \Phi_{B,j} \Big) (z_x^2 + c(t,x) z^2) \\
& & + \sum_{j=1}^{N-1} \int_\R \Phi_{B,j} (c(t,x) -c_j(t))z^2 +  \int_\R \Phi_{B,N} (c(t,x) - 2c) z^2 \\
& &  - \frac 12\int_\R \Big( 2 \tilde R -3\beta \tilde R^2 -\sum_{j=1}^{N-1} \Phi_{B,j}(2Q_{c_j,\beta} -3\beta Q_{c_j,\beta}^2) \Big)z^2 \\
& & - \frac 32 \int_\R (1 -\Phi_{B,N})(c-\varphi_c^2)z^2 
\eee
Each term above can be treated following the lines of the proof of Lemma 4 in \cite{MMT}, and it is proved that for all $B$ large, the above terms can be estimated by
$$
\geq -\frac 1{16} \la_0 \int_\R (z_x^2+z^2).
$$
The final conclusion is that for $B$ large enough, but independent of $z$,
$$
\mathcal{\tilde F}(t)  \geq  \frac 1{8}\la_0 \sum_{j=1}^N \int_\R \Phi_{B,j}(z_x^2 + z^2 ) \geq  \frac 1{16}\la_0 \int_\R (z_x^2+z^2),
$$
for some $\la_0>0$ independent of $z(t)$ and $B$. Thus  the proof of (\ref{posit}) is complete.

\section{Proof of some identities}

\begin{lem}[Identities]\label{Id}~

Let $Q_{c,\beta}$ be the Gardner soliton from (\ref{SolG}). Then one has
\ben
\item \emph{Basic identities}.
\be\label{Id1}
Q_{c,\beta}'' = cQ_{c,\beta} -Q_{c,\beta}^2 +\beta Q_{c,\beta}^3, \qquad Q_{c,\beta}'^2 = cQ_{c,\beta}^2 -\frac 23 Q_{c,\beta}^3 +\frac \beta 2Q_{c,\beta}^4.
\ee
\item \emph{Integrals}.
\be\label{Id3}
\beta \int_\R Q_{c,\beta}^3  = -c \int_\R Q_{c,\beta} + \int_\R Q_{c,\beta}^2, \qquad \beta \int_\R Q_{c,\beta}^4  = - \frac 43c \int_\R Q_{c,\beta}^2 + \frac{10}{9}\int_\R Q_{c,\beta}^3,
\ee
and
\be\label{Id5}
\int_\R Q_{c,\beta} = \frac 32 \beta \int_\R Q_{c,\beta}^2  + 6\sqrt{c}. 
\ee
\item \emph{Energy}. Let 
$$
E_\beta[Q_{c,\beta}] := \frac 12 \int_\R Q_{c,\beta}'^2 -\frac 13 \int_\R Q_{c,\beta}^3 +\frac \beta 4  \int_\R Q_{c,\beta}^4.
$$
Then one has
\be\label{EQb}
E_\beta[ Q_{c,\beta}] =\frac 2{3\beta} c^{3/2} -\frac{1}{9\beta}\int_\R Q_{c,\beta}^2.
\ee
\item \emph{Weinstein's condition}. For $c<\frac 2{9\beta}$,
\be\label{dQc}
\partial_c \frac 12 \int_\R Q_{c,\beta}^2 = \frac{9c^{1/2}}{2-9\beta c}.
\ee
\een
\end{lem}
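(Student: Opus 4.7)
The plan is to derive each identity from the ODE satisfied by the Gardner soliton, plus one integration performed against the explicit profile \eqref{SolG}. First I would feed the travelling-wave ansatz $v(t,y)=Q_{c,\beta}(y-ct)$ into \eqref{Ga} and integrate once in space (the constant is zero by the $s\to\pm\infty$ decay) to obtain $Q_{c,\beta}''=cQ_{c,\beta}-Q_{c,\beta}^2+\beta Q_{c,\beta}^3$, which is the first half of \eqref{Id1}. Multiplying by $Q_{c,\beta}'$ and integrating a second time immediately produces the first integral $(Q_{c,\beta}')^2=cQ_{c,\beta}^2-\tfrac{2}{3}Q_{c,\beta}^3+\tfrac{\beta}{2}Q_{c,\beta}^4$, completing \eqref{Id1}.

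The identities \eqref{Id3} come from standard virial manipulations. Integrating $Q_{c,\beta}''=cQ_{c,\beta}-Q_{c,\beta}^2+\beta Q_{c,\beta}^3$ over $\mathbb{R}$ kills the second-derivative term and yields the first of \eqref{Id3}. Multiplying the same ODE by $Q_{c,\beta}$ and integrating, using $\int Q Q''=-\int (Q')^2$ together with the first integral $(Q')^2=cQ^2-\tfrac{2}{3}Q^3+\tfrac{\beta}{2}Q^4$, produces after collecting terms the relation $\beta \int Q^4=-\tfrac{4c}{3}\int Q^2+\tfrac{10}{9}\int Q^3$.

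For \eqref{Id5} one must actually use the explicit form \eqref{SolG}; this is the delicate step. I would introduce the auxiliary function $F(s):=\sinh(\sqrt{c}\,s)/(1+\rho\cosh(\sqrt{c}\,s))$, compute $F'(s)=\sqrt{c}\,(\rho+\cosh(\sqrt{c}\,s))/(1+\rho\cosh(\sqrt{c}\,s))^2$, and then exploit the relations $1+\rho\cosh(\sqrt{c}\,s)=3c/Q_{c,\beta}$ and $1-\rho^2=\tfrac{9\beta c}{2}$ to rewrite $F'$ as the linear combination $F'(s)=\tfrac{1}{3\sqrt{c}\,\rho}\bigl(Q_{c,\beta}-\tfrac{3\beta}{2}Q_{c,\beta}^2\bigr)$. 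Since $F(\pm\infty)=\pm 1/\rho$, integrating this identity over $\mathbb{R}$ gives $\int Q-\tfrac{3\beta}{2}\int Q^2=6\sqrt{c}$, which is \eqref{Id5}.

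From here the remaining claims are essentially linear algebra. Substituting the first integral into $E_\beta[Q_{c,\beta}]=\tfrac12\int(Q')^2-\tfrac13\int Q^3+\tfrac{\beta}{4}\int Q^4$ collapses the energy to $\tfrac{c}{2}\int Q^2-\tfrac{2}{3}\int Q^3+\tfrac{\beta}{2}\int Q^4$; then \eqref{Id3} expresses $\int Q^3$ and $\int Q^4$ in terms of $\int Q$ and $\int Q^2$, and \eqref{Id5} eliminates $\int Q$, after which the coefficients collapse exactly to \eqref{EQb}. Finally, for the Weinstein condition \eqref{dQc}, I would use that $Q_{c,\beta}$ is a critical point of $E_\beta+cM$ (which is just the Hamiltonian reading of the profile equation), so along the family $c\mapsto Q_{c,\beta}$ one has $\tfrac{d}{dc}E_\beta[Q_{c,\beta}]=-c\,M'(c)$ where $M(c):=\tfrac12\int Q_{c,\beta}^2$. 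Differentiating \eqref{EQb} directly yields $\tfrac{d}{dc}E_\beta[Q_{c,\beta}]=\tfrac{c^{1/2}}{\beta}-\tfrac{2}{9\beta}M'(c)$, and equating the two expressions and solving the resulting linear equation for $M'(c)$ produces $M'(c)=9c^{1/2}/(2-9\beta c)$. The only nontrivial ingredient in the whole chain is identifying the antiderivative $F$ used for \eqref{Id5}; every other step reduces to routine ODE/virial bookkeeping, which is why I expect \eqref{Id5} to be the main obstacle.
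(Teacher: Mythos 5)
Your proof is correct. For \eqref{Id1}, \eqref{Id3} and \eqref{EQb} it coincides with the paper's argument: same travelling-wave reduction and first integral, same virial manipulations (integrate the ODE, then multiply by $Q_{c,\beta}$ and use $\int Q Q''=-\int Q'^2$), and the same elimination of $\int Q^3$, $\int Q^4$ and $\int Q$ via \eqref{Id3} and \eqref{Id5}. For \eqref{Id5} the difference is only cosmetic: you integrate the explicit antiderivative $F(s)=\sinh(\sqrt{c}\,s)/(1+\rho\cosh(\sqrt{c}\,s))$ of $\frac{1}{3\sqrt{c}\,\rho}\bigl(Q_{c,\beta}-\frac{3\beta}{2}Q_{c,\beta}^2\bigr)$, while the paper integrates the identity $\bigl(Q_{c,\beta}'/Q_{c,\beta}\bigr)'=-\frac13 Q_{c,\beta}+\frac{\beta}{2}Q_{c,\beta}^2$ using $Q_{c,\beta}'/Q_{c,\beta}\to\mp\sqrt{c}$; since $F=-\frac{1}{\sqrt{c}\,\rho}\,Q_{c,\beta}'/Q_{c,\beta}$, these are the same computation in different clothing (your boundary values need $\rho>0$, which holds for $c<\frac{2}{9\beta}$). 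The genuinely different step is \eqref{dQc}: the paper differentiates the explicit profile \eqref{SolG} with respect to $c$ and evaluates $\int Q_{c,\beta}\,\partial_c Q_{c,\beta}$ with the help of \eqref{Id3} and \eqref{Id5}, whereas you use that $Q_{c,\beta}$ is a critical point of $E_\beta+cM$, hence $\frac{d}{dc}E_\beta[Q_{c,\beta}]=-c\,\frac{d}{dc}M[Q_{c,\beta}]$, and compare with the derivative of \eqref{EQb}; solving the resulting linear relation indeed gives $\partial_c\frac12\int Q_{c,\beta}^2=9c^{1/2}/(2-9\beta c)$. Your route avoids the somewhat messy computation of $\partial_c Q_{c,\beta}$ and is more structural; its only extra ingredients are the $C^1$ dependence of $c\mapsto Q_{c,\beta}$ in $H^1$ (clear from \eqref{SolG}) and the previously established identity \eqref{EQb}, so there is no circularity. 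The paper's route, by contrast, is a self-contained explicit computation that does not invoke the variational characterization.
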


\begin{proof}
The first identity in (\ref{Id1}) is just the elliptic equation for $Q_{c,\beta}$, obtained by replacing in (\ref{Ga}). The  second one follows from the first identity in (\ref{Id1}), after multiplication by $Q_{c,\beta}'$ and integration in space.  

On the other hand, the first identity in (\ref{Id3}) follows after integration of (\ref{Id1}). In the same form, the second identity in (\ref{Id3}) is a consequence of the first identity in (\ref{Id1}) and the integration of the second one in (\ref{Id1}) against $Q_{c,\beta}$.

Let us prove (\ref{Id5}). Note that from (\ref{Id1}),
$$
\big(\frac{Q_{c,\beta}'}{Q_{c,\beta}} \big)' =  -\frac 13 Q_{c,\beta}+\frac \beta2 Q_{c,\beta}^2.
$$
Using the definition of $Q_{c,\beta}$ from (\ref{SolG}), and integrating, one gets
$$
-2\sqrt{c}  =  -\frac 13 \int_\R Q_{c,\beta}+\frac \beta2 \int_\R Q_{c,\beta}^2,
$$
namely (\ref{Id5}). Now we prove  (\ref{EQb}).  From (\ref{Id1}) and (\ref{Id3})
\bee
E_\beta[ Q_{c,\beta}]  & =& \frac 12 \int_\R Q_{c,\beta}'^2 -\frac 13 \int_\R Q_{c,\beta}^3 +\frac\beta 4\int_\R Q_{c,\beta}^4  \\
& =&  \frac 12c \int_\R Q_{c,\beta}^2 -\frac 23 \int_\R Q_{c,\beta}^3 +\frac\beta 2\int_\R Q_{c,\beta}^4 \\
& =& -\frac 16 c\int_\R Q_{c,\beta}^2 -\frac 19\int_\R Q_{c,\beta}^3 = \frac c{9\beta}\int_\R Q_{c,\beta}  -(\frac c6 +\frac{1}{9\beta})\int_\R Q_{c,\beta}^2. 
\eee
Using (\ref{Id5}), we obtain (\ref{EQb}), as desired.

\medskip

Finally, let us prove (\ref{dQc}). From the definition (\ref{SolG}), one has
$$
\partial_c Q_{c,\beta}(s) = \frac 1c\Big[(1+ \frac {9\beta c}{4\rho^2})Q_{c,\beta} -\frac{3\beta}{4\rho^2}Q_{c,\beta}^2+ \frac 12s Q_{c,\beta}'(s)\Big]; 
$$
therefore, using (\ref{Id3}) and (\ref{Id5}), 
\bee
\partial_c\frac 12 \int_\R Q_{c,\beta}^2  & = & \int_\R Q_{c,\beta} \partial_c Q_{c,\beta} =  \frac 3{4c} \int_\R Q_{c,\beta}\Big[(1 +  \frac {3\beta c}{\rho^2} )Q_{c,\beta} -\frac{\beta }{\rho^2}Q_{c,\beta}^2\Big] \\
& =& \frac 3{4c} \Big[ (1 +  \frac {3\beta c}{\rho^2}  -\frac 1{\rho^2}) \int_\R Q_{c,\beta}^2    + \frac{c}{\rho^2}\int_\R Q_{c,\beta}  \Big] \\
& =& \frac 3{4c} \Big[ (1 +  \frac {3\beta c}{\rho^2}  -\frac 1{\rho^2} +\frac {3\beta c}{2\rho^2}) \int_\R Q_{c,\beta}^2    +  \frac{6c^{3/2}}{\rho^2} \Big]  = \frac{9c^{1/2}}{2\rho^2}.
\eee
\end{proof}

\medskip

\noindent
{\bf Acknowdlegments.} I would like to thank Yvan Martel, Frank Merle,  Miguel Angel Alejo, Luis Vega and Manuel del Pino for several remarks and comments on a first version of this paper. This work was in part written at the University of the Basque Country, and the University of Versailles. The author has been partially funded by grants Anillo ACT 125 CAPDE and Fondo Basal CMM.

\medskip

\end{document}